\theoremstyle{plain}
\newtheorem{thm}{Theorem}[section]
\newtheorem{prop}[thm]{Proposition}
\newtheorem{lem}[thm]{Lemma}
\newtheorem{cor}[thm]{Corollary}
\theoremstyle{definition}
\newtheorem{defn}[thm]{Definition}
\theoremstyle{remark}
\newtheorem{remark}{Remark}
\newtheorem{example}{Example}
  \def\C{{\mathbb{C}}}  \def\E{{\mathbb{E}}} \def\F{{\mathbb{F}}}        \def\N{{\mathbb{N}}}  \def\P{{\mathbb{P}}}  \def\R{{\mathbb{R}}}        
 \def\cB{{\mathcal{B}}}     \def\cG{{\mathcal{G}}} \def\cH{{\mathcal{H}}}     \def\cM{{\mathcal{M}}}  \def\cO{{\mathcal{O}}}   \def\cR{{\mathcal{R}}} \def\cS{{\mathcal{S}}} \def\cT{{\mathcal{T}}}      
\newcommand\dom{\operatorname{dom}}
\newcommand\End{\operatorname{End}}
\newcommand\id{\operatorname{id}}
\newcommand\IRS{\operatorname{IRS}}
\newcommand\dist{\operatorname{d}}
\newcommand\Meas{{\operatorname{Meas}}}
\newcommand\Prob{\operatorname{Prob}}
\newcommand\ran{\operatorname{ran}}
\renewcommand\Re{\operatorname{Re}}
\newcommand\supp{\operatorname{supp}}
\newcommand\Stab{\operatorname{Stab}}
\newcommand\Sub{\operatorname{Sub}}
\newcommand\Span{\operatorname{span}}
\newcommand{\actson}{\curvearrowright}
\newcommand{\ip}[1]{\langle #1 \rangle}
\begin{document}
\title[Co-spectral radius and equivalence relations]{Co-spectral radius for countable equivalence relations}

\author{Mikl\'{o}s Abert}
\address{\parbox{\linewidth}{MTA Alfr\'{e}d R\'{e}nyi Institute of Mathematics
Re\'{a}ltanoda utca 13-15.
H-1053 Budapest,
Hungary
}}
\email{miklos.abert@renyi.mta.hu}
\urladdr{https://www.renyi.hu/~abert/}

\author{Mikolaj Fraczyk}
\address{\parbox{\linewidth}{Faculty of Mathematics and Computer Science, Jagiellonian University, ul. ${\L}$ojasiewicza 6, 30-348 Krak{\'o}w, Poland
}}
\email{mikolaj.fraczyk@uj.edu.pl}
\urladdr{https://sites.google.com/view/mikolaj-fraczyk/home}

\author{Ben Hayes}
\address{\parbox{\linewidth}{Department of Mathematics, University of Virginia, \\
141 Cabell Drive, Kerchof Hall,
P.O. Box 400137
Charlottesville, VA 22904}}
\email{brh5c@virginia.edu}
\urladdr{https://sites.google.com/site/benhayeshomepage/home}

\begin{abstract}
  We define the co-spectral radius of inclusions $\cS\leq \cR$ of discrete, probability measure-preserving equivalence relations, as the sampling exponent of a generating random walk on the ambient relation. The co-spectral radius is analogous to the spectral radius for random walks on $G/H$ for inclusion $H\leq G$ of groups. 
  For the proof, we develop a more general version of the 2-3 method we used in another work on the growth of unimodular random rooted trees. We use this method to show that the walk growth exists for an arbitrary unimodular random rooted graph of bounded degree. We also investigate how the co-spectral radius behaves for hyperfinite relations, and discuss new critical exponents for percolation that can be defined using the co-spectral radius.
  \end{abstract}

\subjclass[2020]{60K35; 37A30, 37A20, 46N30, 60G10}

\thanks{M. Abert acknowledges support from the KKP 139502 project, the ERC Consolidator Grant 648017 and the Lendulet Groups and Graphs research group. B. Hayes gratefully acknowledges support from the NSF grants DMS-1600802, DMS-1827376, and DMS-2000105. M. Fraczyk was partly supported by the Dioscuri Programme initiated by the Max Planck Society, jointly managed
with the National Science Centre (Poland), and mutually funded by the Polish Ministry of Science and
Higher Education and the German Federal Ministry of Education and Research.}

\maketitle

\section{Introduction}

Let $\Gamma$ be an infinite group generated by a finite symmetric set $S$ and
let $G=\mathrm{Cay}(\Gamma,S)$ be its Cayley graph. We would like to measure
the size of certain subsets $C$ of $\Gamma$, in a translation invariant way, using the graph structure of $G$. As a measuring tool, we will use the lazy\footnote{Laziness is convenient, but it's not necessary. See Section \ref{sec:invariance of almost sure sr} for details.} random walk $(g_n)$ on $G$, starting at the identity, through the sampling probabilities
\[
p_{n,C}=\P(g_{n}\in C).
\]

We are mostly interested in the case when $C$ has zero density and consider the sampling exponent
\[
\rho(C) = \lim_{n\rightarrow\infty}p_{n,C}{}^{1/n}\text{.}%
\]

When $H$ is a subgroup of $\Gamma$, the limit defining $\rho(H)$ will exist and be equal to the spectral radius of the random walk on the quotient Schreier graph $\mathrm{Sch}(\Gamma,H,S)$. Indeed, the covering map $\Gamma\rightarrow\Gamma/H$ gives a bijection between walks
returning to $H$ on $\Gamma$ and walks returning to the root on $\mathrm{Sch}%
(\Gamma,H,S)$. There is considerable literature on this notion \cite{ZukTrees, SoWoess, KaiWoess, BarVir, Woess}, starting with the amenability criterion of Kesten \cite{KestenSR, AVY14, mKesten}.
For arbitrary subsets of $\Gamma$, the sampling exponent will not exist in general, but the picture changes, when it is defined by a $\Gamma$-invariant stochastic process. The most studied such subsets \cite{BenjaminiSchrammConj, PSNPercolation, LyonsSchramm, Hutchcroft, Hutchcroft_Trees,LPBook} are percolation clusters of an i.i.d. (site or bond) percolation on Cayley graphs.

\begin{thm}\label{thm:PercExp}
Let $\Gamma$ be a countable group and consider an i.i.d. percolation on a Cayley graph of $\Gamma$. Then almost surely, for every connected component $C$ of the percolation, the limit \[
\rho(C) = \lim_{n\rightarrow\infty}p_{n,C}{}^{1/n}\text{.}
\]
exists.
\end{thm}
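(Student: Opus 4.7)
My plan is to deduce Theorem~\ref{thm:PercExp} from the paper's main result on almost sure existence of the co-spectral radius for inclusions of countable p.m.p.\ equivalence relations. Let $\Omega$ denote the space of percolation configurations equipped with the product Bernoulli measure, and let $\cR$ be the orbit equivalence relation of the shift action $\Gamma \actson \Omega$; this is a countable p.m.p.\ relation. Declare $\omega \sim_\cS g\omega$ iff $e$ and $g$ lie in the same percolation cluster of $\omega$; because the cluster structure is invariant under the diagonal $\Gamma$-action, $\cS \leq \cR$ is a measurable p.m.p.\ subrelation. The lazy $S$-step random walk on $\Gamma$ generates $\cR$, and its sampling probability at time $n$ --- the probability that the walk still lies in the $\cS$-class of the basepoint --- is exactly $p_{n, C_e(\omega)} = \P(g_n \in C_e(\omega))$. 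Hence $\rho(C_e(\omega))$, if it exists, is by definition the co-spectral radius of $\cS \leq \cR$ at $\omega$, and its almost sure existence follows from the paper's central theorem, proved via a generalised 2-3 method on unimodular random rooted graphs of bounded degree.

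To extend to all clusters, note that every cluster of $\omega$ equals $C_v(\omega)$ for some $v \in \Gamma$. By $\Gamma$-invariance of the Bernoulli measure and of the walk's step distribution, the sampling exponent of the walk started at $v$ for the cluster $C_v(\omega)$ reduces to the $C_e$-case at a shifted configuration, so it exists almost surely by the first paragraph. To swap the walk's starting point from $v$ to $e$ I use the two-sided estimate
\[
p_k(e,v) \cdot \P\bigl(g_n^{(v)} \in C_v(\omega)\bigr) \leq \P\bigl(g_{n+k}^{(e)} \in C_v(\omega)\bigr) \leq p_k(v,e)^{-1} \cdot \P\bigl(g_{n+k}^{(v)} \in C_v(\omega)\bigr),
\]
valid for any fixed $k$ with $p_k(e,v) > 0$ (obtained by prepending, respectively removing using reversibility, a deterministic walk of length $k$ between $e$ and $v$); taking $n$-th roots equates the two exponents. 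A countable union over $v \in \Gamma$ delivers the conclusion simultaneously for all clusters.

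The entire technical weight of the argument lies in the appeal to the main existence theorem: direct subadditivity of $\log p_{n, C_e}$ fails because the walk can freely leave and re-enter $C_e(\omega)$, and the Cauchy--Schwarz calculus that gives $p_{2n}(o,o)^{1/(2n)} \to \rho$ for return probabilities does not transfer to sampling probabilities of a subset. Overcoming this is precisely what the paper's extension of the 2-3 method to unimodular random rooted graphs of bounded degree is designed to accomplish; once that tool is in hand, Theorem~\ref{thm:PercExp} is a formal consequence of the manipulations above.
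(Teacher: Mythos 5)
Your proposal is correct and follows essentially the same route as the paper: translate the percolation into an inclusion $\cS\leq\cR$ of p.m.p.\ equivalence relations, invoke the main existence theorem for the co-spectral radius (Theorem \ref{thm:SubExponent}, itself a consequence of the $2$--$3$ method), and then transfer the exponent between basepoints of a fixed cluster via the two-sided comparison $p_{2k,x,y}\,p_{2(n-k),y,[y]_{\cS}}\leq p_{2n,x,[y]_{\cS}}\leq p_{2(n+k),y,[y]_{\cS}}\,p_{2k,x,y}^{-1}$, exactly as in the discussion following Theorem \ref{T:existence of the relative spectral radius}. The only cosmetic difference is that you build $\cS\leq\cR$ directly from the Bernoulli shift on configurations (implicitly using essential freeness of that action, which holds for i.i.d.\ percolation with parameter in $(0,1)$ on an infinite group), whereas Proposition \ref{prop:PercGraph} adjoins an auxiliary i.i.d.\ coloring to guarantee this.
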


This result is most interesting when the percolation clusters are infinite and there are infinitely many of them a.s. It is easy to see that once $\rho(C)$ exists, it is independent of the starting point of the walk and so, by the indistinguishability theorem of Lyons and Schramm \cite[Theorem 3.3]{LyonsSchramm}, it will be a constant on infinite clusters, depending only on the percolation parameter. Note that when $\Gamma$ is amenable, the above phase does not exist for i.i.d. percolations, but by Kesten's theorem \cite{KestenSR}, in this case, $\rho(C)$ equals $1$ for any subset $C$ anyways. That is, $\rho$ is not a suitable measuring tool for amenable groups. For non-amenable groups, it is a well-known conjecture that the non-uniqueness phase exists for any Cayley graph of the group \cite{BenjaminiSchrammConj}.

We establish the above theorem in a much wider
generality, using the framework of countable measure preserving equivalence
relations. Note that our most general results in this direction (see Theorem \ref{T:existence of the relative spectral radius}, as well as Section \ref{sec:percolation} for the translation between relations and percolation) do not even involve an ambient group anymore, but for the introduction, we stick to group actions.

Let $(X,\mu)$ be a standard Borel probability space and let $\Gamma$ act on
$(X,\mu)$ by $\mu$-preserving maps. We define the \emph{orbit relation}
\[
\cR=\left\{  (x,gx)\in X\times X\mid x\in X,g\in\Gamma\right\}  \text{.}%
\]
A \emph{subrelation} $S$ of $\cR$ is a Borel subset of $\cR$ that, as a relation
on $X$, is an equivalence relation. Orbit relations and their subrelations are
commonly studied objects in measured group theory, see \cite{KM, FurmanSurvey, GaboriauSurvey, Gab1, GroceryList, FurmanMeasRigid}.

Let $\cS$ be a subrelation of $\cR$. For $x,y\in X$ such that $(x,y)\in \cR$
and for
a natural $n$ let the sampling probabilities be
\[
p_{n,x,[y]_{\cS}}=\P((g_{n}x,y)\in \cS)
\]
where, as before, $(g_{n})$ is the lazy random walk on $G$ starting at the identity of $\Gamma$. That is, we walk from $x$ and take the probability that we hit the $\cS$-class of $y$.

\begin{thm}\label{thm:SubExponent}
Let $\Gamma$ be a countable group acting by measure preserving maps on
$(X,\mu)$ and let $\cS$ be a subrelation of the orbit relation of the action.
Then for $\mu$-almost every $y\in X$,
for every $x\in \Gamma y$
\[
\rho^{S}(x,[y]_{\cS}) = \lim_{n\rightarrow\infty}p_{n,x,[y]_{\cS}}{}^{1/n}\text{.}%
\]
exists and is independent of $x$. Moreover, if $\cS$ is either normal or ergodic, then $\rho^{\cS}$ is almost surely constant.
\end{thm}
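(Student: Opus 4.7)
The plan is to reduce to the special case $x = y$, apply the paper's walk growth theorem for unimodular random rooted bounded-degree graphs to get existence there, and then derive independence from $x$ and the constancy statements by elementary arguments. For the first step, fix a finite symmetric lazy generating set $S$ of $\Gamma$, and for $\mu$-generic $y$ view the orbit $\Gamma y$ as the Schreier-type graph (with respect to $S$), rooted at $y$, equipped with the $\{0,1\}$-coloring of vertices given by the indicator of $[y]_{\cS}$. Because $\mu$ is $\Gamma$-invariant and $\cS$ is measurable, this is a unimodular random rooted colored graph of uniformly bounded degree. The sampling probability $p_{n,y,[y]_{\cS}}$ is a measurable functional of this rooted colored graph alone: it is the probability that the lazy random walk driven by $S$ lands on a $1$-colored vertex at step $n$. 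Applying the walk growth result for unimodular random rooted bounded-degree graphs (mentioned in the introduction and proved via the generalized 2-3 method) yields existence of $\lim_{n\to\infty} p_{n,y,[y]_{\cS}}^{1/n}$ for $\mu$-a.e.\ $y$; denote this limit $\rho^{\cS}(y)$.

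To pass from $x = y$ to arbitrary $x \in \Gamma y$, note that since $S$ is symmetric and generates $\Gamma$, there exists $k$ with $p_k(y,x) > 0$ and $p_k(x,y) > 0$. The Chapman--Kolmogorov-style bounds
\[
p_{n+k,x,[y]_{\cS}} \geq p_k(x,y)\,p_{n,y,[y]_{\cS}}, \qquad p_{n+k,y,[y]_{\cS}} \geq p_k(y,x)\,p_{n,x,[y]_{\cS}}
\]
sandwich $p_{n,x,[y]_{\cS}}^{1/n}$ between two sequences that converge to $\rho^{\cS}(y)$, so the limit $\rho^{\cS}(x,[y]_{\cS})$ exists and equals $\rho^{\cS}(y)$, independent of the choice of $x \in \Gamma y$.

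The previous step exhibits $y \mapsto \rho^{\cS}(y)$ as an $\cS$-invariant measurable function, so ergodicity of $\cS$ immediately gives a.s.\ constancy. In the normal case, normality of $\cS$ in $\cR$ provides an $\cR$-action on the space of $\cS$-classes that is transitive on each $\cR$-orbit, under which $\rho^{\cS}$ must be invariant; constancy then propagates across $\cR$-orbits. The main obstacle is the first step: the hitting probability of the subset $[y]_{\cS}$ does not satisfy a clean pointwise submultiplicativity (unlike the return probability $p_n(y,y)$, for which a Fekete-type argument would suffice), so one must crucially exploit the unimodularity of the random rooted setup. This is precisely where the generalized 2-3 method enters, upgrading the weak subadditivity one gets on average (via mass transport) to pointwise almost sure existence of the exponent.
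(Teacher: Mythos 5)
Your overall architecture matches the paper's (pointwise existence via the $2$--$3$ method, then a Chapman--Kolmogorov sandwich for independence of the starting point, then invariance arguments for constancy), and the sandwich step and the ergodic case are fine. But there are two genuine gaps. First, the reduction of the existence of $\lim_n p_{n,y,[y]_{\cS}}^{1/n}$ to the walk growth theorem does not work: that theorem concerns $w_n(o)$, the number of length-$n$ walks with \emph{free} endpoint, whereas you need the probability that the walk's endpoint lies in a distinguished subset $[y]_{\cS}$. These are different functionals, and no already-proved corollary in the paper covers the latter directly (Theorem \ref{thm:PercExp} does, but it is itself deduced from Theorem \ref{thm:SubExponent}, so invoking it would be circular). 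The correct route, which the paper takes, is to apply the general $2$--$3$ method (Theorem \ref{thm:generalized 23}) to the functions $f_k(x,y)=p_{2k,x,y}1_{\cS}(x,y)$ on $\cR$ itself, checking the four hypotheses: $\pi$-symmetry with $\pi=1$ from reversibility, the convolution inequality because restricting the intermediate point to $[x]_{\cS}$ only loses mass, positivity from laziness, and $D(x)=p_{2,x,x}$. Relatedly, your encoding as a unimodular random rooted \emph{colored} graph with color $1_{[y]_{\cS}}$ is not re-rooting invariant (the color changes when the root moves to a different $\cS$-class); the root-independent decoration is the whole partition into $\cS$-classes, as in Proposition \ref{prop:PercGraph}.

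Second, the normal case is essentially asserted rather than proved. The paper explicitly emphasizes that there is \emph{no} natural action of $[\cR]$ on $[x]_{\cR}/\cS$, so "normality provides an $\cR$-action on the space of $\cS$-classes" is not available. What normality gives (via \cite{FSZ}) is a countable family of partial morphisms $\phi$ with $\phi([x]_{\cS}\cap\dom(\phi))\subseteq[\phi(x)]_{\cS}$ whose images cover each $\cR$-class, and the claim that $\rho^{\cS}$ is invariant under such $\phi$ is the hard technical core of Section \ref{sec:invariance of almost sure sr}: since $\phi$ is only partially defined and maps $\cS$-classes \emph{into} (not onto) $\cS$-classes, comparing $p_{2n,x,\cS}$ with $p_{2n,\phi(x),\cS}$ requires restricting to sets where $p_{r_k,x,\phi(x)}$ is uniformly bounded below (a Borel--Cantelli argument), which in turn forces one to develop the local co-spectral radius $\rho^{\cS}_{E}$, prove the saturation identity $\rho^{\cS}_{E}=\rho^{\cS}1_{\widetilde{E}}$ (Theorem \ref{thm: satuartion invariance}), and handle non-lazy $\nu$ by a perturbation lemma. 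None of this is routine, and your one-sentence treatment skips exactly the step where the difficulty lives.
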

Note that one can also state an equivalent, stochastic form of Theorem \ref{thm:SubExponent} using the notion of an Invariant Random Partition, that is, a random partition of the group that is invariant in distribution under the shift action. Invariant random partitions are defined in \cite[Section 8.1]{RobinThesis}. They are natural stochastic generalizations of subgroups, as  individually shift invariant partitions are exactly coset partitions with respect to a subgroup. The reformulation says that for any invariant random partition of a countable group, all the partition classes have well defined sampling exponents. We chose to state Theorem \ref{thm:SubExponent} in the relation language because that is the internal language of its proof. We develop the invariant random partition language in the paper \cite{AFHTrajectoryDensity}. 

Theorem \ref{thm:SubExponent} does not seem to follow from the usual arguments. The local environment may look quite different from different points of a class. In algebraic terms, there is no natural quotient object on which the group would act. This is a major deviance from the subgroup case, where this
homogeneity holds and trivially makes $p_{n,H}$ supermultiplicative in $n$. As a result, we were not able to derive the existence of the sampling exponent
with the usual tools, including the standard or Kingman ergodic theorems. For the case of unimodular random trees, Theorem \ref{thm:PercExp} follows from an argument we call the $2$-$3$ method.
In order to prove Theorem \ref{thm:SubExponent} we introduce a generalized version of $2$-$3$ method, which we expect to have more applications. We recall the  rough idea behind the $2$-$3$ method is to establish a local submultiplicative nature of the
sequence and then yield the existence of the limit by a density argument.
The reader can form a quick impression on this method by reading the text after the statement of Theorem \ref{thm: walk growth intro}. 

It turns out that the sampling exponent still admits a spectral interpretation, and equals a norm of a natural Markov-type operator acting on a quotient object of sorts (see Theorem \ref{T:existence of the relative spectral radius}). Because of that and to keep consistency with the subgroup case, we call the sampling exponent $\rho^{S}(x,[y]_{\cS})$ the \emph{co-spectral radius} of the class $[y]_{\cS}$ in the paper. This operator approach also has interesting connections to prior work in percolation theory. For example, our methods recover a lemma due to Schramm on the connectivity decay of random walks in critical percolation which has been used in later results in percolation theory  \cite{KozmaPerc},\cite[Lemma 6.4]{Hutchcroft}, \cite[Section 3]{Hutchcroft_Trees}. We refer the reader to  the discussion preceding Section \ref{sec: 23 method subsection} for more details.

This Markov-type operator is defined on a Hilbert space which is constructed by integrating the bundle of Hilbert spaces $\ell^{2}([x]_{\cR}/\cS)$ into one Hilbert space. While we will not need it for this work, this Hilbert space can be naturally related to the Jones' basic construction of the inclusion $L(\cS)\leq L(\cR)$ of von Neumann algebras of the corresponding equivalence relations, as well as the $L^{2}$-space of a natural measure space occurring in \cite{FSZ} (see Definition 1.4 of that paper).
We expect that our new methods will have applications to the operator algebraic setting. 
We refer the reader to the discussion preceding Section \ref{subsec:proof of average version}. 

We now state the 2-3 method theorem in its most general form that leads to Theorem \ref{thm:SubExponent}. 

\begin{thm}\label{thm:generalized 23INTRO}
Let $\cR$ be a discrete, measure-preserving equivalence relation over a standard probability space $(X,\mu)$, and fix $\pi\in L^{1}(X,\mu)$ with $\pi(x)\in (0,\infty)$ for almost every $x\in X$.
Let $f_{k}\colon \cR\to [0,\infty]$ for $k\geq 1$ be a sequence of  measurable functions such that:
\begin{enumerate}[(a)]
\item $f_{k}$ is $\pi$-symmetric for all $k\in \N$, i.e. $f_{k}(x,y)\pi(y)=f_{k}(y,x)\pi(x)$ for almost every $(x,y)\in \cR$,
\item for all $l,k\in \N$ we have $\sum_{y,z\in [x]_{\cR}}f_{l}(x,z)f_{k}(z,y)\leq \sum_{y\in [x]_{\cR}}f_{l+k}(x,y)$ for almost every $x\in X$, \label{item:subadditive convolution inequality intro}
\item for almost every $x\in X$ and every $k\in \N$ we have $0<\sum_{y\in [x]_{\cR}}f_{k}(x,y)<\infty$, \label{item:almost sure convergence intro}
\item there is a measurable $D\colon X\to (0,\infty)$ so that  $\sum_{y\in [x]_{\cR}}f_{l+k}(x,y)\geq D(x)^{l}\sum_{y\in [x]_{\cR}}f_{k}(x,y)$ for almost every $x\in X$ and every $l,k\in \N$. \label{item:small log scale errors intro}
\end{enumerate}
Then
\[\widetilde{f}(x)=\lim_{k\to\infty}\left(\sum_{y\in [x]_{\cR}}f_{k}(x,y)\right)^{1/k}\]
exists and is positive almost surely. Further:
\begin{enumerate}[(i)]
\item for every $k\in \N$  \[\int\pi(x)\frac{\sum_{y\in [x]_{\cR}}f_{k}(x,y)}{\widetilde{f}(x)^{k}}\,d\mu(x)\leq \int \pi\,d\mu,\] \label{item: average monotonicity intro}
\item $\lim_{k\to\infty}\left(\int \pi(x)\sum_{y\in [x]_{\cR}}f_{k}(x,y)\,d\mu(x)\right)^{1/k}=\|\widetilde{f}\|_{\infty}.$ \label{item:integral formula for sup intro}
\end{enumerate}
\end{thm}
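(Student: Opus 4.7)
I would translate the hypotheses into operator language. Writing $F_k(x):=\sum_{y\in[x]_\cR}f_k(x,y)$ and $T_l g(x):=\sum_{y\in[x]_\cR}f_l(x,y)g(y)$, condition (b) reads $T_l F_k\leq F_{l+k}$ almost everywhere. Condition (a) is equivalent to symmetry of the kernel $f_l(x,y)/\pi(x)$ on $\cR$, so the operator $\widetilde T_l:=T_l/\pi$ is self-adjoint on $L^2(X,\mu)$. Combining these via Fubini on $\cR$ yields the integrated supermultiplicativity
\[
\int F_{l+k}/\pi\,d\mu \;\geq\; \int F_l F_k/\pi\,d\mu,
\]
from which, with $l=k$ and Cauchy--Schwarz (using exhaustion to handle non-integrability of $1/\pi$ if necessary), one extracts dyadic monotonicity along $k=2^n k_0$ of a suitable normalization of $\int F_k/\pi\,d\mu$, converging to some $L\in(0,\infty]$ which is positive by (d).

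The main obstacle is upgrading this integrated convergence to the pointwise statement $F_k(x)^{1/k}\to\widetilde f(x)$ almost everywhere. I define $\widetilde f(x):=\limsup_k F_k(x)^{1/k}$; class-invariance follows from $F_{k+l_0}(y)\geq f_{l_0}(y,x)F_k(x)$ applied symmetrically for $x,y$ in the same class, and positivity $\widetilde f\geq D$ a.e.\ from iterating (d). The generalized 2-3 method enters here: since $\gcd(2,3)=1$, every sufficiently large integer $n$ can be written as $2a+3b$, and iterating (b) gives $T_2^a T_3^b 1\leq F_n$, reducing the growth of $F_n$ to the two primitive scales. Under a contradiction hypothesis $\liminf_k F_k(x)^{1/k}<\widetilde f(x)-\varepsilon$ on a set $E$ of positive $\pi$-measure, I would use Egorov to uniformize the defect on a subset of $E$, and then integrate the iterated (b) inequality using self-adjointness of $\widetilde T_l$ to transport mass between high- and low-growth scales, contradicting the dyadic monotonicity of the previous paragraph. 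The lower bound (d) is essential for ruling out degenerate interpolation gaps between consecutive scales. This mass-transport argument is the technical pivot and the chief novelty of the generalized 2-3 method.

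Once pointwise convergence is in hand, (i) follows by stratifying over the $\cR$-invariant level sets $E_t:=\{\widetilde f\leq t\}$. Restricting the kernels $f_k$ to $E_t$ preserves (a)--(d), so the first two paragraphs apply verbatim to the restriction; combined with the pointwise convergence $F_k^{1/k}\to\widetilde f\leq t$ on $E_t$ and Egorov on $E_t$, this produces the slicewise bound $\int_{E_t}\pi F_k\,d\mu\leq t^k\int_{E_t}\pi\,d\mu$ for every $k\in\N$. Integrating this slicewise bound against a finely partitioned distribution of $\widetilde f$---with ratios $t_{j+1}/t_j$ so close to $1$ that $(t_{j+1}/t_j)^k<1+\delta$---yields $\int\pi F_k/\widetilde f^k\,d\mu\leq(1+\delta)\int\pi\,d\mu$ for every $\delta>0$, proving (i).

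Finally, (ii) follows from (i) and an Egorov argument. The bound $\int\pi F_k\,d\mu\leq\|\widetilde f\|_\infty^k\int\pi\,d\mu$ from (i) gives $\limsup_k\bigl(\int\pi F_k\,d\mu\bigr)^{1/k}\leq\|\widetilde f\|_\infty$. For the reverse, fix $\varepsilon>0$; since $\{\widetilde f>\|\widetilde f\|_\infty-\varepsilon\}$ has positive $\pi$-measure, Egorov produces a subset $A$ of positive $\pi$-measure on which $F_k^{1/k}\to\widetilde f$ uniformly, whence $F_k\geq(\|\widetilde f\|_\infty-2\varepsilon)^k$ on $A$ for all large $k$. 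This gives $\int\pi F_k\,d\mu\geq\bigl(\int_A\pi\,d\mu\bigr)(\|\widetilde f\|_\infty-2\varepsilon)^k$ and hence $\liminf_k\bigl(\int\pi F_k\,d\mu\bigr)^{1/k}\geq\|\widetilde f\|_\infty-2\varepsilon$.
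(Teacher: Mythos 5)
The core of the argument is missing. You correctly identify that the hard step is pointwise (rather than integrated) convergence, but your mechanism for it does not work, and you have misread what the ``2--3'' in the method refers to. It is not the additive fact that large $n$ can be written as $2a+3b$; it is the multiplicative fact that $\{2^{p}3^{q}k_{1}\}$ is asymptotically dense on the logarithmic scale, combined with two \emph{reverse} inequalities at the doubling and tripling scales. Concretely, the paper's proof rests on the Cauchy--Schwarz estimates $\widetilde{f}_{k}(x)^{2}\leq \phi_{k}(x)\widetilde{f}_{2k}(x)$ and $\widetilde{f}_{k}(x)^{3}\leq \psi_{k}(x)\widetilde{f}_{3k}(x)$, where $\phi_{k}(x)=\sum_{y}f_{k}(x,y)\widetilde{f}_{k}(y)^{-1}$ and $\psi_{k}$ is its three-fold analogue, together with the mass-transport identities $\int\pi\phi_{k}\,d\mu=\int\pi\psi_{k}\,d\mu=\int\pi\,d\mu$. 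Summability of $k^{-2}\pi\phi_{k}$ then gives $\phi_{k}(x)\leq k^{2}$ eventually for a.e.\ $x$, the accumulated correction along $\{2^{p}3^{q}k_{1}\}$ stays subexponential, and hypothesis (d) upgrades convergence along this logarithmically dense set to convergence along all of $\N$. Nothing in your sketch substitutes for this: the obstruction you must overcome is that hypothesis (b) only gives $\sum_{z}f_{l}(x,z)F_{k}(z)\leq F_{l+k}(x)$, and $F_{k}(z)$ for $z\in[x]_{\cR}$ can be far smaller than $F_{k}(x)$, so no pointwise supermultiplicativity of $F_{k}(x)$ is available; the functions $\phi_{k},\psi_{k}$ are precisely the price of converting one into the other. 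Your proposed contradiction argument (Egorov on a defect set, then ``transport mass between scales'' to contradict integrated dyadic monotonicity) cannot close this gap: by part (ii) the integrated quantity sees only the essential supremum of $\widetilde{f}$, so oscillation of $F_{k}(x)^{1/k}$ between $\liminf$ and $\limsup$ on a positive-measure set where $\widetilde{f}<\|\widetilde{f}\|_{\infty}$ is invisible to any integrated statement. (Separately, your normalization by $1/\pi$ is hazardous since $1/\pi$ need not be integrable; the paper weights by $\pi$ throughout.)

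Your derivation of (i) also has a gap. From pointwise convergence plus Egorov on $E_{t}=\{\widetilde{f}\leq t\}$ you can only conclude $F_{k}\leq (t+\varepsilon)^{k}$ for all $k$ \emph{large} on a subset of almost full measure, not the slicewise bound $\int_{E_{t}}\pi F_{k}\,d\mu\leq t^{k}\int_{E_{t}}\pi\,d\mu$ for \emph{every} $k$; there is no a priori reason $F_{1}(x)\leq\widetilde{f}(x)$, and no monotonicity of $(\int_{E_{t}}\pi F_{k}\,d\mu)^{1/k}$ is available in this generality (the spectral-theoretic monotonicity used elsewhere in the paper requires an actual self-adjoint operator, whereas (b) is only an inequality). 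The paper instead obtains (i) directly from the iterated estimate $\widetilde{f}_{k}(x)\leq C_{p,0,k}(x)^{1/2^{p}}\widetilde{f}_{2^{p}k}(x)^{1/2^{p}}$, the arithmetic-geometric mean inequality applied to $C_{p,0,k}^{1/2^{p}}$, the identity $\int\pi\phi_{j}\,d\mu=\int\pi\,d\mu$, and Fatou as $p\to\infty$. Your treatment of (ii) given (i) and pointwise convergence is fine (Egorov for the lower bound in place of the paper's Fatou--H\"older argument), but it rests on the two unproven steps above.
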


In the special case when $f_k(x,y)$ is the probability of transition from $x$ to $y$ by a standard random walk  in $k$ steps, the first part of the theorem can be seen as a large deviations estimate, in the sense that it controls the density of starting points where the random walk sampling probability deviates from what is suggested by the co-spectral radius. In fact, it is natural to ask whether our main result holds in the ``annealed" sense, that is, when we take expected value of the sampling probabilities before we take the n-th root. A warning comment here is that in this case we have to consider the event of returning to the class of the starting point, because equivalence classes often can not be individually identified in a measurable way (this is what is called indistinguishability in percolation theory, which is equivalent to the ergodicity of a subrelation in the measured language).
In any case, this ``annealed" version is much simpler to prove than our main result and most of the effort in the paper is spent to establish the pointwise (or ``quenched") version.
Additionally, we show that the ``annealed" version is the essential supremum of the ``quenched" version and that in many cases, the ``quenched version" is a.s. constant.  See Theorem \ref{T:existence of the relative spectral radius} for a precise relation between the ``annealed" and ``quenched" versions. As a sample application, in the case of Bernoulli bond percolation it follows from the indistinguishability result of Lyons-Schramm \cite{LyonsSchramm} and our work that the ``annealed" version and the ``quenched" version agree on infinite clusters.

\begin{remark}
The following was pointed out to us by the anonymous referee. As in Theorem \ref{thm:PercExp} consider an i.i.d percolation on the Cayley graph of a group $\Gamma$. Let $X_{n}$ be a random walk on $\Gamma$ with $X_{0}=e$ and with transition probabilities $\P(X_{n}=a|X_{n-1}=b)=\nu(b^{-1}a)$ for some $\nu\in \Prob(\Gamma)$ whose support generates $\Gamma$. We let $\P(X_{0}\leftrightarrow X_{n})$ be the probability that $X_{n}$ is in the connected component of $e$ in this percolation, and we let $C$ be the connected component of $e$ in this percolation. 
Theorem \ref{thm:generalized 23INTRO} implies in the context of Theorem \ref{thm:PercExp} that
\[\lim_{n\to\infty}\P(X_{0}\leftrightarrow X_{n})^{1/n}=\lim_{n\to\infty}\P(X_{0}\leftrightarrow X_{n}|C)^{1/n}\]
almost surely, where $C$ is the cluster of $X_{0}$. The subadditive ergodic theorem shows for, e.g., Bernoulli percolation that almost surely
\[\lim_{n\to\infty} \P(X_0 \leftrightarrow X_n | X_n )^{1/n} = \exp \left[ \lim_{n\to\infty} \frac{1}{n} \E \log \P(X_0 \leftrightarrow X_n) \right].\]
So in order to get the correct almost-sure decay of the connection probability conditioned on the random walk, one must take the limit of the expectation of the logarithm. Our results says that if one instead conditions on the cluster $C$, then one does not need to take the expectation of the logarithm.
In this sense, our result may be thought of as saying that the major contribution to $\E[\P(X_{0} \leftrightarrow X_{n}|C)]$ is from contributions which are of ``typical size" (up to subexponential factors). For the expectation $\E[\P(X_{0}\leftrightarrow X_{n}|X_{n})]$ this is not the case, and often rare events contribute substantially to the expectation.
\end{remark}

\subsection{The $2$--$3$--method and walk growth}
We now present another application of the general $2$-$3$ (see \cite[Section 3]{AFHGrowth} for another application), showing the existence of the exponential rate of growth of the number of walks in any unimodular random rooted graph (see also Example \ref{ex-23Walks} in Section \ref{sec: 23 method subsection}). We also sketch the proof, as it illustrates quite well what goes into the 2-3 method that is behind Theorem \ref{thm:SubExponent}.
Then we state the most general version of the 2-3 method.

\begin{thm}\label{thm: walk growth intro}
Let $(\cG,o)$ be a connected bounded degree unimodular random rooted graph with degree at most $d$. Let $w_n(o)$ denote the number of length $n$ walks starting at $o$. Then the limit $\lim_{n\to\infty} \frac{1}{n}\log w_n(o)$ exists. Further, if $(\cG,o)$ is ergodic and $\eta\in \Prob(\cM_{d})$ is the distribution of $(\cG,o)$ define $A\in B(L^{2}(\cM_{d},\eta))$ by
\[(Af)([(G,o)])=\sum_{w\thicksim o}f(w).\]
Then $A$ is a self-adjoint operator and $\log\|A\|=\lim_{n\to\infty}\frac{1}{n}\log w_{n}(o).$
\end{thm}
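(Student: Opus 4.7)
The approach is to apply Theorem~\ref{thm:generalized 23INTRO} to the rerooting equivalence relation $\cR$ on $\cM_d$, where $(G, o_1) \cR (G, o_2)$ whenever $o_1, o_2 \in V(G)$; unimodularity of $\eta$ is exactly the statement that $\cR$ preserves $\eta$. I take $\pi \equiv 1$ and define the kernel $f_k \colon \cR \to [0, \infty)$ by letting $f_k((G, o), (G, v))$ be the number of walks of length $k$ from $o$ to $v$ in $G$. Each of hypotheses (a)--(d) of Theorem~\ref{thm:generalized 23INTRO} is then routine: symmetry (a) is walk reversal; the convolution identity (b) holds with equality by concatenation of walks; (c) gives $\sum_y f_k(x, y) = w_k(o_x) \in [1, d^k]$ (the isolated-root case being trivial); and $D(x) \equiv 1$ works in (d) because appending edges yields $w_{l+k}(x) \geq w_k(x)$. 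The theorem therefore asserts the existence and positivity of $\widetilde{f}(x) = \lim_k w_k(x)^{1/k}$ almost surely, which is the first claim.

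For a.s.\ constancy in the ergodic case, I check that $\widetilde{f}$ is $\cR$-invariant: for $v$ at graph-distance $m$ from $o$ in $G$, the concatenation bound gives $w_{n+m}(o) \geq f_m(o, v) w_n(v) \geq w_n(v)$, whence $\widetilde{f}((G, o)) \geq \widetilde{f}((G, v))$, with the reverse direction by symmetry. Ergodicity of $\cR$ then forces $\widetilde{f}$ to equal a constant $c \in [1, d]$ almost surely.

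To conclude $\|A\| = c$, I observe first that the mass transport principle applied to the indicator of the neighbor relation shows $A$ is self-adjoint, and that $(A^n \mathbf{1})(x) = w_n(x)$ identifies $\langle A^n \mathbf{1}, \mathbf{1}\rangle$ with $\E[w_n]$. Part~(ii) of Theorem~\ref{thm:generalized 23INTRO} then gives $\E[w_n]^{1/n} \to \|\widetilde{f}\|_\infty = c$; combined with $\|A\|^n \geq \langle A^n \mathbf{1}, \mathbf{1}\rangle$, this yields $\|A\| \geq c$. The reverse inequality $\|A\| \leq c$ is the main technical hurdle. Using the spectral theorem for the self-adjoint $A$, the spectral measure $\mu_{\mathbf{1}}$ of the constant function satisfies $\int \lambda^{2n} \, d\mu_{\mathbf{1}}(\lambda) = \E[w_{2n}] \leq c^{2n}$ by Part~(i) of Theorem~\ref{thm:generalized 23INTRO} (applied with $\pi\equiv 1$ and $\widetilde f = c$), so $\supp \mu_{\mathbf{1}} \subset [-c, c]$. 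It remains to argue that $\mathbf{1}$ is cyclic for $A$, reducing the full norm to the spectral behavior on the cyclic subspace of $\mathbf{1}$; this will follow from the nonnegativity of the kernel of $A$ together with ergodicity of $\cR$, which prevents $A$ from admitting a nontrivial invariant subspace contained in $\mathbf{1}^{\perp}$.
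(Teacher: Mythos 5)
Your overall plan (apply Theorem \ref{thm:generalized 23INTRO} to a rerooting relation with the walk-counting kernel, then identify the essential supremum with $\|A\|$) is the paper's, but your first step does not work on $\cM_{d}$ itself. A point of $\cM_{d}$ is an \emph{isomorphism class}, so the $\cR$-class of $x=[(G,o)]$ is in bijection with $V(G)/\Aut(G)$, not with $V(G)$. When $G$ has nontrivial automorphisms --- e.g.\ for any transitive $G$, where $[x]_{\cR}$ is a single point --- the sum $\sum_{y\in[x]_{\cR}}f_{k}(x,y)$ cannot equal $w_{k}(o)$ no matter how you define $f_{k}$, and in fact your $f_{k}$ is not well defined on $\cR\subseteq\cM_{d}\times\cM_{d}$: the pair of points $\bigl([(G,o)],[(G,v)]\bigr)$ does not determine the doubly rooted isomorphism class of $(G,o,v)$, hence not the number of walks from $o$ to $v$. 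Relatedly, unimodularity of $\eta$ (the mass transport principle over $v\in V(G)$) is not the same as $\cR$ being measure preserving on $\cM_{d}$. The paper avoids all of this via Proposition \ref{prop:PercGraph}: one passes to the decorated space $\Omega_{\#}$ carrying an i.i.d.\ coloring, which almost surely destroys all automorphisms so that $[\omega]_{\cR}$ is genuinely the vertex set of $\cG_{\omega}$. On $\Omega_{\#}$ your verification of hypotheses (a)--(d) and your invariance-plus-ergodicity argument for constancy go through exactly as written.

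The second genuine gap is the last step. The vector $\mathbf{1}$ is in general \emph{not} cyclic for $A$, and ergodicity of $\cR$ does not prevent $A$ from having invariant subspaces inside $\mathbf{1}^{\perp}$: $A$ is a Markov-type operator, not the Koopman representation of the relation. For instance, take any ergodic unimodular random rooted graph that is a.s.\ $d$-regular but whose law on $\cM_{d}$ is non-atomic (e.g.\ Schreier graphs of a suitable ergodic IRS of a free product of copies of $\Z/2\Z$); then $A^{n}\mathbf{1}=d^{n}\mathbf{1}$, the cyclic subspace of $\mathbf{1}$ is one-dimensional, and $\mathbf{1}^{\perp}$ is itself a nontrivial invariant subspace. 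The theorem still holds there, but not for the reason you propose. The correct route --- and the paper's --- is Lemma \ref{L:norm on dense set of vectors}: for a self-adjoint $T$ and any family $(\xi_{k})$ with dense span, $\|T\|=\sup_{k}\lim_{n}\ip{T^{2n}\xi_{k},\xi_{k}}^{1/2n}$. Apply this with the family $\{1_{E}:E\subseteq\cM_{d}\text{ measurable}\}$, whose span is dense in $L^{2}(\cM_{d},\eta)$, and use the nonnegativity of the kernel (your own observation, deployed differently) to get $0\leq\ip{A^{2n}1_{E},1_{E}}\leq\ip{A^{2n}\mathbf{1},\mathbf{1}}=\E[w_{2n}(o)]$ for every $E$; this yields $\|A\|\leq\lim_{n}\E[w_{2n}(o)]^{1/2n}=c$ with no cyclicity claim needed. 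With those two repairs your argument coincides with the paper's proof.
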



The usual technique used to establish the rate of growth in ergodic theory is the Kingman sub-additive theorem. We weren't able to find any action or equivalence relation with a sub-multiplicative cocycle that would control the number of walks in $\cG$, so we couldn't use it to solve the problem. As $w_n(o)$ can be naturally expressed as an inner product, one is also tempted to use spectral theory, but this also did not work for us. Instead we use the mass transport principle to show that the inequalities
\[w_{2n}(o)\gg w_n(o)^2 \textnormal{ and } w_{3n}(o)\gg w_n(o)^3\] hold with overwhelming probability, as $n$ gets large. To see why this is useful, imagine that we know that these inequalities hold always and with the implicit constant $1$. Then, the sequence $w_{2^p3^q}(o)$ is sup-multiplicative, so the limit
\[ \lim_{p,q\to\infty} \frac{1}{2^p3^q}\log w_{2^p3^q}(o)\] exists. The function $n\mapsto \log w_n(o)$ is Lipschitz and the set $2^p3^q$ is dense on the logarithmic scale, so we can deduce that the limit $\lim_{n\to\infty}\frac{1}{n}\log w_n(o)$ also exists.

\subsection{Outline of the paper}
Section \ref{sec:Notation} introduces the necessary background for the paper, including a discussion of measure-preserving equivalence relations and how to reduce the study of percolation clusters to equivalence relations. Section \ref{sec:co spectral radius setup} contains a proof of the co-spectral radius and its basic properties. In that section we also include some background on representations of equivalence relations, so that in  subsection \ref{subsec:proof of average version} we may identify the co-spectral radius with an operator norm. In Section \ref{sec: 23 method subsection} we give a proof of the general $2$-$3$ method which gives us the pointwise existence of the co-spectral radius as a special case. 
In Section \ref{sec:invariance of almost sure sr} we show that the co-spectral radius is almost surely constant if the subrelation is ergodic or normal. In Section \ref{sec: hyperfinite relations}, we show that the co-spectral radius agrees with the spectral radius when the subrelation is hyperfinite and give a counterexample to the converse (i.e. a Kesten's theorem for subrelations) using monotone couplings of IRS's. In Section \ref{sec:percolation} we use the co-spectral radius to define new critical exponents for percolation. Finally, in Section \ref{sec: walk growth} we use the $2$-$3$ method to establish the existence of walk growth and relate it to an operator norm.

\hfill 

\noindent \textbf{Remark.} Note that this paper and parts of \cite{AFHGrowth} and \cite{AFHTrajectoryDensity} first appeared on arXiv as one long text.  Following explicit suggestions of helpful referees, we decided to separate the work into the three papers, also to make the results more accessible to their natural audiences. 

\hfill

\noindent \textbf{Acknowledgements.} Much of the work by the last named author was done on visits to the R\'{e}nyi institute in Budapest. He would like to thank the R\'{e}nyi Institute for its hospitality. We would like to thank Gabor Pete for helpful conversations.  We thank the anonymous referee for their numerous comments, which greatly improved the paper.

\section{Background and Notation}\label{sec:Notation}
A \emph{standard probability space} is  pair $(X,\mu)$ where $X$ is a standard Borel space, and $\mu$ is the completion of a Borel probability measure on $X$. We say that $E\subseteq X$ is \emph{measurable} if it is in the domain of $\mu$.
An \emph{equivalence relation over $(X,\mu)$} is a Borel subset $\cR\subseteq X\times X$ so that the relation $\thicksim$ on $X$ given by $x\thicksim y$ if $(x,y)\in \cR$ is an equivalence relation. For $x\in X$, we let $[x]_{\cR}=\{y\in X:(x,y)\in \cR$. We say that $\cR$ is \emph{discrete} if for almost every $x\in X$ we have that $[x]_{\cR}$ is countable. If $\cR$ is discrete, we may turn $\cR$ into a $\sigma$-finite measure space by endowing $\cR$ with the Borel measure
\[\overline{\mu}(E)=\int_{X}|[x]_{\cR}\cap E|\,d\mu(x)\mbox{ for all Borel $E\subseteq \cR$.}\]
We will continue to use $\overline{\mu}$ for the completion of $\overline{\mu}$. If $\cR$ is discrete, we say that it is \emph{measure-preserving} if the map $\cR\to \cR$ given by $(x,y)\mapsto (y,x)$ is measure-preserving. Equivalently, this just means that the \emph{mass-transport principle} holds: if $f\colon \cR\to [0,\infty]$ is Borel, then
\[\int_{X}\sum_{y\in [x]_{\cR}}f(x,y)\,d\mu(x)=\int_{X}\sum_{y\in [x]_{\cR}}f(y,x)\,d\mu(x).\]
For a group $\Gamma$, and $S\subseteq \Gamma$, we use $\ip{S}$ for the subgroup of $\Gamma$ generated by  $S$.
If $\Gamma$ is a countable group, and $\Gamma\actson (X,\mu)$ is a measure-preserving action, then $\cR_{\Gamma,X}=\{(x,gx):g\in \Gamma\}$ is a discrete, measure-preserving equivalence relation.
We use the notation $\cS\leq \cR$ to mean that $\cS$ is a \emph{subequivalence relation} of $\cR$, namely a subset of $\cR$ which is also an equivalence relation over $(X,\mu)$.
We often abuse terminology and say that $\cS$ is a \emph{subrelation} of $\cR$, and leave it as implicit that $\cS$ should also be an equivalence relation.
If  $(X,\mu)$ and $E\subseteq X$ is measurable we let
\[\cR|_{E}=\cR\cap (E\times E).\]
If $E$ has positive measure, then $\cR|_{E}$ is a measure-preserving relation over the probability space $(E,\frac{\mu(E\cap \cdot)}{\mu(E)})$. We let $[\cR]$ be the group of all bimeasurable bijections $\phi\colon X\to X$ so that $\phi(x)\in [x]_{\cR}$ for almost every $x\in X$. We identify two elements of $[\cR]$ if they agree almost everywhere. We have a natural metric $d$ on $[\cR]$ given by
\[d(\phi,\psi)=\mu(\{x\in X:\phi(x)\ne \psi(x)\}).\]
This is a complete, separable, translation-invariant metric on $[\cR]$ and this turns $[\cR]$ into a \emph{Polish group}. We use $\Prob([\cR])$ for the Borel probability measures on $[\cR]$. Since $[\cR]$ is a Polish group, the space $\Prob([\cR])$ can be made into a semigroup under convolution: so if $\nu_{1},\nu_{2}\in \Prob([\cR])$, then $\nu_{1}*\nu_{2}\in \Prob([\cR])$ is defined by
\[(\nu_{1}*\nu_{2})(\Omega)=\nu_{1}\otimes \nu_{2}(\{(\phi,\psi):\phi\psi\in \Omega\}).\]
Given a countable $\Gamma\leq [\cR]$, we say that $\Gamma$ \emph{generates} $\cR$ if $\Gamma x=[x]_{\cR}$ for almost every $x\in X$. Given a countably supported $\nu\in \Prob([\cR])$, we say that $\nu$ \emph{generates $\cR$} if $\ip{\supp(\nu)}$ generates $\cR$, where $\supp(\nu)=\{\phi \in [\cR]:\nu(\phi)\ne 0\}$. We say that $\nu\in \Prob([\cR])$ is \emph{symmetric} if the map $\phi\mapsto \phi^{-1}$ preserves $\nu$.

Let $G$ be a graph. We write $V(G), E(G)$ for the vertex and the edge set of $G$. Let $v\in V(G)$ and $r\in\mathbb N$. The $r$-ball around $v$ is denoted by $B_G(v,r)$ and the $r$-sphere by $S_G(v,r)$.

We use Vinogradov's notation and write $f\ll g$ if $|f|$ is bounded by a constant times $|g|$.

For a Banach space $V$, we let $B(V)$ be the space of continuous, linear operators $T\colon V\to V$. For $T\in B(V)$, we set:
\[\|T\|=\sup_{v\in V:\|v\|\leq 1}\|T(v)\|.\]
At various times we will have to appeal to spectral theory of bounded self-adjoint operators on a Hilbert space. Since most standard references on this theory assume Hilbert spaces are complex, in order to make these applications most transparent all Hilbert spaces will be assumed complex throughout the paper.

\subsection{Translation between percolation and equivalence relations}\label{sec:translation}
Some of our results are stated in the percolation theory language but all our proofs will be based on measured equivalence relations and graphings. An invariant (edge) percolation on a unimodular random graph $(\mathcal G,o)$ is a random triple $(\mathcal G,o,P)$ where $P$ is a subset of edges of $G$ and the distribution of the triple is invariant under the re-rooting equivalence relation. The following proposition associates a p.m.p\footnote{probability measure preserving} measured equivalence relations to a percolation is such a way that Theorem \ref{thm:PercExp} can deduced from Theorem \ref{thm:SubExponent}. 
\begin{prop}\label{prop:PercGraph} Let $(\mathcal G,o)$ be a unimodular random graph with an invariant percolation $P$. There exists a p.m.p countable equivalence relation $(\Omega_\#,\nu_\#,\cR)$ with a generating graphing $(\varphi_i)_{i\in I}$ and a sub-relation $\cS\subset \cR$ such that
\begin{enumerate}
    \item The rooted graph $(\cG_\omega,\omega)$ with the vertex set $[\omega]_\cR$ and the edge set $\{(\omega', \varphi_i(\omega')): \omega'\in [\omega]_{\cR}, i\in I\}$ has the same law as $(\cG,o)$.
    \item The law of the pairs $(P^o,\cG)$ where $P^o$ is the connected component of the percolation $P\subset \mathcal G$ is the same as the law of $([\omega]_\cS, \cG_\omega)$ where $[\omega]_{\cS}\subset \cG_\omega$ is the $\cS$ equivalence class of $\omega$.
\end{enumerate}
\end{prop}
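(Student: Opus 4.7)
The plan is to realize the percolated unimodular random graph on a canonical space of rooted decorated graphs, so that re-rooting provides $\cR$ and percolation clusters provide $\cS$.

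First I would let $\Omega_\#$ be the space of isomorphism classes of connected locally finite rooted graphs $(G,o)$ in which each edge is labeled by $\{0,1\}$ (recording the percolation configuration) and each vertex carries an auxiliary label drawn from $[0,1]$; this is a Polish space in the local (Benjamini--Schramm) topology. I would take $\nu_\#$ to be the law of $(\cG, o, P)$ augmented by i.i.d.\ Uniform$[0,1]$ vertex labels chosen independently of $(\cG, o, P)$. Almost surely the labels are pairwise distinct and therefore induce a canonical linear ordering of each vertex's neighbors. For each $n \in \N$ I would define the partial Borel involution $\varphi_n \colon \Omega_\# \to \Omega_\#$ that moves the root to its $n$-th labeled neighbor where such a neighbor exists, and is the identity otherwise. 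The family $I = \N$ with graphing $(\varphi_n)_{n \in \N}$ generates an equivalence relation $\cR$ whose class $[\omega]_\cR$ is canonically identified with $V(\cG_\omega)$.

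Next I would verify that $(\Omega_\#, \nu_\#, \cR)$ is measure-preserving. This is the content of the mass-transport principle for the unimodular decorated random graph $(\cG, o, P, \textrm{labels})$: since unimodularity is preserved under attaching independent i.i.d.\ vertex decorations, mass transport translates directly into symmetry of the associated $\sigma$-finite measure on $\cR$ under $(x,y) \mapsto (y,x)$, which is the defining property of a p.m.p.\ discrete equivalence relation. I expect this to be the main technical point, but it is a routine adaptation of the Aldous--Lyons framework for unimodular random graphs.

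Finally I would define $\cS \subseteq \cR$ by declaring $(\omega, \omega') \in \cS$ iff $(\omega, \omega') \in \cR$ and the roots of $\omega$ and $\omega'$ lie in the same connected component of the subgraph of $\cG_\omega$ formed by edges labeled $1$. This is plainly an equivalence relation, and Borel since ``same percolation cluster'' is Borel in the local topology. Conclusion (1) then follows because $\nu_\#$ projects to the law of $(\cG, o, P)$ upon forgetting the auxiliary labels, and the graph $\cG_\omega$ built from $(\varphi_n)$ recovers the rooted decorated graph of $\omega$ by construction. Conclusion (2) is built into the definition of $\cS$: the class $[\omega]_\cS$ corresponds bijectively and measurably to the percolation cluster $P^o$ of $o$ in $\cG_\omega$, so the joint laws of $([\omega]_\cS, \cG_\omega)$ and of $(P^o, \cG)$ coincide after forgetting the auxiliary labels.
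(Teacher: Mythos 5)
Your construction is essentially the paper's own proof, which follows the Aldous--Lyons framework (Example 9.9 there): decorate $(\cG,o,P)$ with i.i.d.\ vertex labels, take the re-rooting equivalence relation on the resulting space of decorated rooted graphs (measure preservation being exactly the mass-transport principle, which survives i.i.d.\ decoration), and let $\cS$ be the subrelation of re-rooting along percolation edges. The only cosmetic differences are that the paper uses an i.i.d.\ two-colouring of vertices rather than $[0,1]$-valued labels, and it specifies the graphing by its edge set rather than by your explicit neighbour-indexing maps $\varphi_n$ (which, as written, need not be injective, so strictly speaking they require a further measurable decomposition into partial isomorphisms --- a routine point that the paper's own informal description also elides).
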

\begin{proof}
 We follow closely the construction in \cite[Example 9.9]{AldousLyons}. Let $\Omega$ be the space of pairs $((G,o),S)$ where $(G,o)$ is a rooted graph of degree at most $d$ and $S$ is a subset of edges. The distribution of the percolation $P$ is naturally a probability measure on $\Omega$, let us call it $\mu$. Let $\Omega_\#$ be a the set of triples $((G,o),S,\lambda),$ where $(G,o),S$ are as before and $\lambda$ is a two coloring of the vertices of $(G,o)$. Let $\mu_\#$ be the distribution of the random triple $((\cG,o),P,\Lambda)$, where $\Lambda$ is an i.i.d coloring. The space $(\Omega_\#,\mu_\#)$ is equipped with a natural finite graphing $\Phi$ in which $((G,o),S,\lambda),((G',o'),S',\lambda')$ are connected if and only if $G=G', S=S', \lambda=\lambda'$ and $o'$ is a neighbour of $o$. The graphing $\Phi$ spans the re-rooting measured equivalence relation $\cR$, which preserves $\mu_\#$. For each point $\omega\in \Omega_\#$, the equivalence class $[\omega]_\cR$ is equipped with a bounded degree graph structure $\cG_\omega$. The resulting random rooted graph $(\cG_\omega,\omega)$ has the same law as $(\mathcal G,o)$ and the second coordinate has the same law as the percolation $P$. To construct the sub-relation $\cS$, we select a sub-graphing $\Phi'\subset \Phi$ where $((G,o),S,\lambda),((G',o'),S',\lambda')$ are connected if and only if $G=G', S=S', \lambda=\lambda'$ and $o'$ is a neighbour of $o$ connected by an edge in $S$. In this way the connected component of $P$ containing the root is given by the $\cS$-equivalence class of $w$ in $\cG_w$.
\end{proof}

\section{Existence of the co-spectral radius for subrelations}\label{sec:co spectral radius setup}

Let $\cR$ be an ergodic probability measure-preserving equivalence relation over a standard probability space $(X,\mu)$ and let $\nu\in \Prob([\cR])$ be countably supported and symmetric, i.e. $\nu(\{\phi\})=\nu(\{\phi^{-1}\})$ for all $\phi\in [\cR]$.  Consider a measurable subequivalence relation $\cS\leq \cR$. For $x\in X$, the measure $\nu$ determines a random walk on $[x]_{\cR}$ with transition probabilities $p_{y,z}=\nu(\{\phi:\phi(y)=z\})$.
For $n\in \N$, $(x,y)\in \cR$, we let $p_{n,x,y}^{\nu}$ (resp. $p_{n,x,\cS}^{\nu}$) be the probability that the random walk corresponding to $\nu$ starting at $x$ is at $y$ after $n$ steps (resp. in $[x]_{\cS}$ after $n$ steps). By direct calculation,
\[p^{\nu}_{n,x,y}=\nu^{*n}(\{\phi:\phi(x)=y\}).\]
If $\nu$ is clear from the context (which is the usually the case), we will use $p_{n,x,\cS},p_{n,x,x}$ instead of $p_{n,x,\cS}^{\nu},p_{n,x,x}^{\nu}.$ We are interested in the existence of the co-spectral radius of $\cS$ inside $\cR$ which is, by definition, the limit
\[\lim_{n\to\infty}p_{2n,x,\cS}^{\frac{1}{2n}}.\]
In particular, we will show that this limit exists almost surely. While there are easy examples where this limit genuinely depends upon $x$ (see Example \ref{example: direct sum counterexample}) we will show that in many cases it is almost surely constant and is the norm of a self-adjoint operator on a Hilbert space naturally associated to $\cS\leq \cR$.

This is, of course, motivated by the case of an inclusion of groups $\Delta\leq \Gamma.$ Here  the existence of the co-spectral radius, as well as the fact that it is the norm of the corresponding Markov operator on $\ell^{2}(\Gamma/\Delta)$ is a nontrivial, but well known, fact. In the case $\Gamma$ is finitely-generated and $\nu$ is the uniform measure on a finite generating set of $\Gamma$ we are looking at a random walk on a  Schreier graph and it is easier to see existence of this limit using the natural action of $\Gamma$ on $\Gamma/\Delta$. Even in the case $\nu$ is not the uniform measure on a finite subset of $\Gamma$ the action $\Gamma\actson \Gamma/\Delta$ naturally enters into the very definition of the Markov operator. In the relation case this a priori presents a problem.

Because $\cS$ is a subrelation of $\cR$, for $x\in X$ we can divide $[x]_{\cR}$ into $\cS$-equivalence class. We let $[x]_{\cR}/\cS$ be the space of $\cS$-equivalence classes in $[x]_{\cR}$. The field of spaces $[x]_{\cR}/\cS$ is analogous to $\Gamma/\Delta$, and so we may consider $\ell^{2}([x]_{\cR}/\cS)$ as analogous to $\ell^{2}(\Gamma/\Delta)$. However, there is no obvious natural action of $[\cR]$ on $[x]_{\cR}/\cS$ and this makes it difficult to see how one would define a Markov operator, and thus the co-spectral radius. We proceed to explain how to navigate this difficulty by collecting the field of Hilbert spaces $\ell^{2}([x]_{\cR}/S)$ together in a natural object.

\begin{defn}Let  $(X,\mu)$ be a standard probability space, then a \emph{measurable field of Hilbert spaces over $X$} is a family $(\mathcal{H}_{x})_{x\in X}$ of separable Hilbert spaces, together with a family $\Meas(\mathcal{H}_{x})\subseteq \prod_{x\in X}\mathcal{H}_{x}$  so that:
\begin{itemize}
\item for every $(\xi_{x})_{x},(\eta_{x})_{x}\in \Meas(\mathcal{H}_{x})$ we have that $x\mapsto \ip{\xi_{x},\eta_{x}}$ is measurable,
\item if $\eta=(\eta_{x})_{x\in X}\in \prod_{x\in X}\mathcal{H}_{x}$ and $x\mapsto \ip{\xi_{x},\eta_{x}}$ is measurable for all $\xi=(\xi_{x})_{x}\in \Meas(\mathcal{H}_{x}),$ then $\eta\in \Meas(\mathcal{H}_{x}),$
\item there is a sequence $(\xi^{(n)})_{n=1}^{\infty}$ with $\xi^{(n)}=(\xi^{(n)})_{x\in X}$ in $\Meas(\mathcal{H}_{x})$ so that $\mathcal{H}_{x}=\overline{\Span\{\xi^{(n)}_{x}:n\in \N\}}$ for almost every $x\in X.$
\end{itemize}
The direct integral, denoted $\int_{X}^{\oplus} \mathcal{H}_{x}\,d\mu(x),$ is defined to be all $\xi\in \Meas(\mathcal{H}_{x})$ so that $\int_{X}\|\xi_{x}\|^{2}\,d\mu(x)<\infty,$ where we identify two elements of $\Meas(\mathcal{H}_{x})$ if they agree outside a set of measure zero. We put an inner product on $\int_{X}^{\oplus}\mathcal{H}_{x}$ by
\[\ip{\xi,\eta}=\int_{X}\ip{\xi_{x},\eta_{x}}\,d\mu(x),\]
and this gives $\int_{X}^{\oplus}\mathcal{H}_{x}$ the structure of a Hilbert space.
\end{defn}
We shall typically drop ``over $X$" in ``a measurable field of Hilbert space over $X$" if $X$ is clear from the context.
For later use, if $(\mathcal{H}_{x})_{x}$ is a measurable field of Hilbert spaces, then given $A\subseteq X$ measurable and $\xi\in \Meas(\mathcal{H}_{x}),$ we let $1_{A}\xi\in \Meas(\mathcal{H}_{x})$ be defined by
\[(1_{A}\xi)_{x}=1_{A}(x)\xi_{x}.\]

In our case, we can give the family $(\ell^{2}([x]_{\cR}/\cS))_{x}$ a measurable structure by declaring that $\xi=(\xi_{x})_{x}\in \prod_{x\in X}\ell^{2}([x]_{\cR}/\cS)$ is measurable if $x \mapsto \xi_{x}([\phi(x)]_{\cS})$ is measurable, for all $\phi\in [\cR]$. General facts about direct integral imply this collection of measurable vectors satisfy the above axioms (see Lemma \ref{lem:Hilbert space structure}). So we can define $L^{2}(\cR/\cS)$ by
\[L^{2}(\cR/\cS)=\int_{X}^{\bigoplus}\ell^{2}([x]_{\cR}/\cS)\,d\mu(x).\]
As mentioned above, there is no obvious natural action of $[\cR]$ on $[x]_{\cR}/\cS$. However, we do have a natural unitary representation  of $[\cR]$ on $L^{2}(\cR/\cS)$. Define
\[\lambda_{\cS}\colon [\cR]\to \mathcal{U}(L^{2}(\cR/\cS))\]
by
\[(\lambda_{\cS}(\phi)\xi)_{x}=\xi_{\phi^{-1}(x)}.\]
We will not need it for this paper, but this can be regarded as a representation of $\cR$ itself (a precise definition will be given in \cite{AFHTrajectoryDensity}). For our purposes, we simply note that we have natural Markov operators defined on $L^{2}(\cR/\cS)$. Namely, for a countably supported $\nu\in \Prob([\cR])$, we define \[\lambda_{\cS}(\nu)=\sum_{\phi\in [\cR]}\nu(\phi)\lambda(\phi).\]
Here we are mildly abusing notation and using $\nu(\phi)$ for $\nu(\{\phi\})$, this will not present problems since $\nu$ is atomic.

\begin{thm}\label{T:existence of the relative spectral radius}
Let $\cR$ be a measure-preserving equivalence with countable orbits over a standard probability space $(X,\mu)$, and let $\nu\in \Prob([\cR])$ be atomic. Suppose that the support of $\nu$ generates $\cR$. Fix $\cS\leq \cR$.
\begin{enumerate}[(i)]
\item \label{I:averageversion subrelation} The limit
\[\rho(\cR/\cS,\nu):=\lim_{n\to\infty}\left(\int p_{2n,x,\mathcal{S}}\,d\mu(x)\right)^{\frac{1}{2n}}\]
exists. Moreover,
\[\rho(\cR/\cS,\nu)=\|\lambda_{\cS}(\nu)\|.\]
\item \label{item:almost sure cs radius} The pointwise limit
\[\rho_{\nu}^{\cS}(x)=\lim_{n\to\infty}p_{2n,x,\cS}^{1/2n}\]
exists almost surely, and
\[\rho(\cR/\cS,\nu)=\|\rho_{\nu}^{\cS}\|_{\infty}.\]
\item \label{item: ergodic normalizer}
Suppose that the partial one-sided normalizer of $\cS\leq \cR$ acts ergodically (see Definition \ref{defn:1-sided normalizer} for the definition). Then $\rho_{\nu}^{\cS}$ is almost surely constant, and by (\ref{item:almost sure cs radius}) equals $\rho(\cR/\cS,\nu)$.  In particular, this applies if $\cS$ is normal or ergodic.
\end{enumerate}
\end{thm}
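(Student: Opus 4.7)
For part (i), I would compute with the natural unit vector $\xi\in L^{2}(\cR/\cS)$ whose $x$-fiber is the delta function at $[x]_{\cS}\in [x]_{\cR}/\cS$. Unwinding the definitions and using that $\lambda_{\cS}$ is a semigroup homomorphism (so $\lambda_{\cS}(\nu)^{2n}=\lambda_{\cS}(\nu^{*2n})$), together with the mass-transport principle to swap $\phi$ with $\phi^{-1}$, one obtains
\[
\int p_{2n,x,\cS}\,d\mu(x) = \langle \lambda_{\cS}(\nu)^{2n}\xi,\xi\rangle = \|\lambda_{\cS}(\nu)^{n}\xi\|^{2},
\]
where the second equality uses self-adjointness of $\lambda_{\cS}(\nu)$ coming from symmetry of $\nu$. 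Spectral theory then yields existence of $\rho(\cR/\cS,\nu)$. To identify this limit with $\|\lambda_{\cS}(\nu)\|$, I would exhibit $\xi$ as a tracial (or at least cyclic/separating) vector for the von Neumann algebra generated by $\{\lambda_{\cS}(\phi):\phi\in[\cR]\}$ together with the diagonal $L^{\infty}(X)$-action on $L^{2}(\cR/\cS)$, along the lines of the Jones' basic construction picture alluded to in the introduction. In such a tracial representation one has $\|a^{n}\hat{1}\|_{2}^{1/n}\to\|a\|$ for every self-adjoint $a$, and the generating hypothesis on $\supp(\nu)$ ensures $\lambda_{\cS}(\nu)$ lies in the relevant algebra.

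For part (ii), I would apply the generalized $2$-$3$ method (Theorem \ref{thm:generalized 23INTRO}) with $\pi\equiv 1$ and $f_{k}(x,y) = p_{k,x,y}\,\mathbf{1}_{\cS}(x,y)$, so that $\sum_{y\in [x]_{\cR}} f_{k}(x,y) = p_{k,x,\cS}$. Hypothesis (a), the identity $p_{k,x,y}=p_{k,y,x}$, comes from symmetry of $\nu$ and measure-preservation of $\cR$. For (b), Chapman--Kolmogorov and the restriction to $\cS$-connected pairs give
\[
\sum_{z\in[x]_{\cR}}f_{l}(x,z)f_{k}(z,y) = \mathbf{1}_{\cS}(x,y)\sum_{z\in[x]_{\cS}}p_{l,x,z}p_{k,z,y} \leq p_{l+k,x,y}\mathbf{1}_{\cS}(x,y) = f_{l+k}(x,y).
\]
Hypotheses (c) and (d) both reduce to laziness: the bound $p_{k,x,\cS}\geq p_{k,x,x}\geq \nu(\mathrm{id})^{k}$ handles positivity (with finiteness automatic from $p_{k,x,\cS}\leq 1$), while $p_{l+k,x,\cS}\geq p_{l,x,x}\,p_{k,x,\cS}\geq \nu(\mathrm{id})^{l}\,p_{k,x,\cS}$ handles (d) with $D(x)=\nu(\mathrm{id})$. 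Conclusion (i) of the $2$-$3$ method then gives a.s. existence of $\rho_{\nu}^{\cS}(x) = \lim_{k}p_{k,x,\cS}^{1/k}$, and conclusion (ii) combined with part (i) identifies $\|\rho_{\nu}^{\cS}\|_{\infty} = \rho(\cR/\cS,\nu)$.

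For part (iii), I would first show $\rho_{\nu}^{\cS}$ is $\cS$-invariant. For $y\in [x]_{\cS}$ and $n,m\in\N$, concatenating a walk from $x$ to $y$ (in $n$ steps) with a walk from $y$ returning to $[y]_{\cS}=[x]_{\cS}$ (in $m$ further steps) gives the pointwise inequality
\[
p_{n+m,x,\cS} \geq p_{n,x,y}\,p_{m,y,\cS},
\]
and taking $(n+m)$-th roots with $n$ fixed as $m\to\infty$ yields $\rho_{\nu}^{\cS}(x)\geq \rho_{\nu}^{\cS}(y)$; symmetry yields equality, so ergodicity of $\cS$ handles the ergodic case. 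For the general statement, an entirely analogous concatenation (replacing $y$ by $\phi(x)$ for $\phi$ in the partial one-sided normalizer, which by definition carries $\cS$-classes into $\cS$-classes) shows $\rho_{\nu}^{\cS}\circ\phi=\rho_{\nu}^{\cS}$ on $\dom(\phi)$, so ergodicity of the normalizer action forces $\rho_{\nu}^{\cS}$ to be essentially constant; the normal case is subsumed since then $[\cR]$ itself sits in the one-sided normalizer and acts ergodically whenever $\cR$ does. The main obstacle I anticipate is the operator-theoretic identification in part (i): building the right tracial/cyclic framework on $L^{2}(\cR/\cS)$ precisely enough to upgrade the spectral radius of $\lambda_{\cS}(\nu)$ on the orbit of $\xi$ to the full operator norm. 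The pointwise statement (ii) is then a clean consequence of the $2$-$3$ machinery, and the invariance arguments in (iii) are comparatively soft.
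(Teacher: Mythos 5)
Your overall architecture matches the paper's (direct integral plus spectral theory for (i), the $2$--$3$ method for (ii), normalizer invariance for (iii)), but the crux of part (i) is missing and the proposed substitute is false. The vector $\xi$ with $\xi_{x}=\delta_{[x]_{\cS}}$ is cyclic for the algebra generated by $\lambda_{\cS}([\cR])$ and $L^{\infty}(X)$, but it is neither tracial nor separating in general: already for $\cS=\cR$ ergodic one has $L^{2}(\cR/\cS)\cong L^{2}(X)$, the algebra in question is all of $B(L^{2}(X))$, and $\xi=1$ is not separating; one can also check directly that $\ip{\lambda_{\cS}(\phi)f\xi,\xi}\neq\ip{f\lambda_{\cS}(\phi)\xi,\xi}$ in general, since $\{x:\phi(x)\in[x]_{\cS}\}$ and $\{x:\phi^{-1}(x)\in[x]_{\cS}\}$ differ. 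Cyclicity alone never gives $\lim_n\ip{a^{2n}\xi,\xi}^{1/2n}=\|a\|$. What the paper actually does is prove a Hilbert-space lemma stating that $\|T\|$ is the \emph{supremum} of the local spectral radii $\lim_n\ip{T^{2n}\eta,\eta}^{1/2n}$ over any spanning family $\eta$, takes the spanning family $\{1_{A}\lambda_{\cS}(\phi)\xi\}$, and then — this is the real content — shows each such local spectral radius is dominated by the one at $\xi$ via the walk estimate $p_{2(n+k),x,\cS}\geq c^{2}p_{2n,\phi(x),\phi^{*}(\cS)}$ with $c=\nu^{*k}(\{\phi\})>0$, which uses the generating hypothesis. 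You need this step; no tracial framework will supply it.

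Two further issues. In part (ii) you verify hypotheses (c) and (d) of the $2$--$3$ theorem using $\nu(\id)>0$, but laziness is not assumed in the theorem; the paper instead takes $f_{k}=p_{2k,\cdot,\cdot}1_{\cS}$ and $D(x)=p_{2,x,x}$, which is positive by symmetry of $\nu$ alone (and matches the even-time limit in the statement, whereas your odd-time sums $p_{k,x,\cS}$ can vanish without laziness). In part (iii), the claim that invariance under a partial one-sided normalizer $\phi$ follows from "an entirely analogous concatenation" understates a genuine difficulty: $\phi$ is only partially defined, the definition only gives $\phi([x]_{\cS}\cap\dom(\phi))\subseteq[\phi(x)]_{\cS}$ (so one obtains only the one-sided inequality $\rho^{\cS}(\phi(x))\geq\rho^{\cS}(x)$, which happily suffices for constancy), and making the comparison quantitative requires restricting to sets $F_{n}=\{x:p_{r_{n},x,\phi(x)}>\delta_{n}\}$, a Borel--Cantelli argument, and the whole local co-spectral radius apparatus ($\rho^{\cS}_{E}$, its monotone continuity and saturation invariance, and a reduction to the lazy case); this occupies most of Section \ref{sec:invariance of almost sure sr}. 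Finally, normality of $\cS$ does \emph{not} mean $[\cR]$ lies in the one-sided normalizer; the paper's Lemma \ref{lem:partial normalizer decomposition} extracts from the Feldman--Sutherland--Zimmer characterization a countable family in $PN^{(1)}_{\cR}(\cS)$ covering the $\cR$-classes, which together with ergodicity of $\cR$ is what yields ergodicity of the normalizer action.
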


We will often drop the $\nu$ from $\rho^{\cS}_{\nu}$ if it is clear from context, and simply write $\rho^{\cS}$. Let $X,\mu,\cR$ be as in Theorem \ref{T:existence of the relative spectral radius}. Suppose that $y\in X$ and that the limit defining $\rho^{\cS}(y)$ exists. Given $x\in [y]_{\cR}$, choose a $k\in\N$ with $p_{2k,x,y}>0$. Then,
\[p_{2k,x,y}p_{2(n-k),y,[y]_{\cS}}\leq p_{2n,x,[y]_{\cS}}\leq p_{2(n+k),y,[y]_{\cS}}p_{2k,x,y}^{-1},\]
and so the limit
\[\lim_{n\to\infty}p_{2n,x,[y]_{\cS}}^{1/2n}\]
exists and equals $\rho^{\cS}(y)$. If $\nu$ is assumed lazy, then
\[p_{2k,x,y}p_{y,y}p_{2(n-k),y,[y]_\cS}\leq p_{2n+1,x,[y]_{\cS}}\leq p_{2(n+k+1),y,[y]_{\cS}}p_{2k,x,y}^{-1}p_{y,y}^{-1},\]
and so
\[\lim_{n\to\infty}p_{n,x,[y]_{\cS}}^{1/n}\]
exists and equals $\rho^{\cS}(x)$. Thus Theorem \ref{T:existence of the relative spectral radius} recovers Theorem \ref{thm:SubExponent}.

\begin{remark}
Co-spectral radius for normal subrelations also occurs in \cite[Lemma 6.7]{BHAExtension}, however in that context the subrelation is both normal and ergodic, which gives a well-defined quotient group. If the subrelation is normal, there is a quotient groupoid \cite{FSZ} however our situation is general enough (encompassing  when the subrelation is ergodic \emph{or} when it is normal) that we cannot appeal directly to the group case as in \cite{BHAExtension}. We remark that the space $L^{2}(\cR/\cS)$ is closely related to the relation $\widehat{\cS}$ that appears in \cite[Definition 1.4]{FSZ} (we caution the reader the roles of $\cS,\cR$ are reversed in \cite{FSZ} relative to our work and so this relation is denoted $\widehat{\cR}$ there), which is a measure-preserving relation on the space $Y=\{(x,c):x\in X,c\in [x]_{\cR}/\cS\}$.

Explicitly,
\[\widehat{\cS}=\{(x,c),(y,\widetilde{c}):(x,y)\in \cR,c=\widetilde{c}\}.\]
We proceed to explain how they are related. Since we not explicitly use the connection between $L^{2}(\cR/\cS)$ and $L^{2}(\widehat{\cS})$, we will only sketch the details.

In \cite{AFHTrajectoryDensity}, we will explain how to give $Y$ the structure of a standard Borel space and equip it with a natural $\sigma$-finite measure. This measure will be defined in such a way to make $L^{2}(Y)$ naturally unitarily isomorphic to $L^{2}(\cR/\cS)$. We also have an isometric embedding $V$ of $L^{2}(\cR/\cS)$ into $L^{2}(\widehat{\cS})$ given by
\[(Vf)(x,c,y,c)=\delta_{x=y}f(x,c).\]
Part of the significance of the relation $\widehat{\cS}$ for the results in \cite{FSZ} is that certain properties of the inclusion $\cS\leq \cR$ (e.g. the index, normality) are reflected in terms of properties of the inclusion $\cR\leq \widehat{\cS}$. An alternative explanation for this can be given by von Neumann algebras: let $L(\cS),L(\cR)$ be the von Neumann algebras of the equivalence relations $\cS,\cR$ as defined in \cite{FelMoore} (the analogous notation there is $M(\cS),M(\cR)$). We then have a natural inclusion of von Neumann algebras $L(\cS)\leq L(\cR)$. The von Neumann algebra $L(\widehat{\cS})$ can be realized as the \emph{basic construction} $\widehat{M}=\ip{L(R),e_{L(S)}}$, in the sense of Jones \cite[Section 3]{Jones83},  of $L(\cS)\leq L(\cR)$. For the interested reader, we remark that under this correspondence, the space $L^{2}(\cR/\cS)$ corresponds to the following subspace of $L^{2}(\widehat{M})$
\[\cH=\overline{\Span\{fu_{\phi}e_{L(\cS)}u_{\phi}^{-1}:f\in L^{\infty}(X,\mu),\phi\in [\cR]\}},\]
where $u_{\phi}$ are the canonical unitaries in $L(\cR)$ corresponding to the elements of $[\cR]$ and $e_{L(\cS)}$ is the Jones projection corresponding to the inclusion $L(\cS)\leq L(\cR)$ (see \cite{FSZ}). Moreover, the action of $[\cR]$ naturally acts on $\cH$ by conjugating by $u_{\phi}$, and this action is isomorphic to the action of $[\cR]$ on $L^{2}(\cR/\cS)$ we define above. We refer to \cite{Jones83}, \cite[Appendix F]{BO}, for the appropriate definitions, which we will not need in this work.    
\end{remark}

We now proceed to prove (\ref{I:averageversion subrelation}) of the above Theorem, whose proof is almost entirely operator theory.

\subsection{Proof of Theorem \ref{T:existence of the relative spectral radius} (\ref{I:averageversion subrelation})}\label{subsec:proof of average version}

The essential idea behind Theorem \ref{T:existence of the relative spectral radius} (\ref{I:averageversion subrelation}) is that we have a natural vector in $L^{2}(\cR/\cS)$ which is given by the measurable field $\xi_{x}=\delta_{[x]_{\cS}}.$ By a direct calculation, we have that
\[\ip{\lambda_{\cS}(\nu)^{2n}\xi,\xi}=\int p_{2n,x,\cS}\,d\mu(x).\]
We then have to show that
\[\|\lambda_{\cS}(\nu)\|=\lim_{n\to\infty}\ip{\lambda_{\cS}(\nu)^{2n}\xi,\xi}^{\frac{1}{2n}}.\]
Because $\nu$ is symmetric, the operator $\lambda_{\cS}(\nu)$ is self-adjoint and the existence of the limit on the right-hand side follows from the spectral theorem \cite[Theorems IX.2.2 and IX.2.3]{Conway}. The limit on the right-hand side is also dominated by $\|\lambda_{\cS}(\nu)\|$. We prove a general Hilbert space theorem which, after a small amount of work, will prove the reverse inequality. First, let us alleviate any concerns about the measurable structure defined on $(\ell^{2}([x]_{\cR}/\cS))_{x\in X}$.

\begin{lem}\label{lem:Hilbert space structure}
Let $\cS\leq \cR$ be discrete measure-preserving equivalence relations over a standard probability space $(X,\mu)$. Define a family $\Meas(\ell^{2}([x]_{\cR}/\cS))\subseteq \prod_{x\in X}\ell^{2}([x]_{\cR}/\cS)$ by saying that $(\xi_{x})_{x\in X}\in \Meas(\ell^{2}([x]_{\cR}/\cS))$ if and only if for every $\phi \in [\cR]$ the function $X\to \C$ given by $x\mapsto \xi_{x}([\phi(x)]_{\cS})$ is measurable. Then the family $\Meas(\ell^{2}([x]_{\cR}/\cS))$ turns $(\ell^{2}([x]_{\cR}/\cS))_{x}$ into a measurable field of Hilbert spaces.
\end{lem}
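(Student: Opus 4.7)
The plan is to verify the three defining axioms of a measurable field of Hilbert spaces directly, using a countable generating set for $\cR$ as the main tool. Since $\cR$ is discrete and measure-preserving, a standard result (Feldman--Moore) gives a countable group $\Gamma=\{\phi_{n}\}_{n\in\N}\leq[\cR]$ with $\Gamma x=[x]_{\cR}$ for almost every $x$. I will build the required generating sequence from these $\phi_{n}$ and derive everything else from it.

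First I would produce the separability sequence. Define $\xi^{(n)}\in\prod_{x\in X}\ell^{2}([x]_{\cR}/\cS)$ by $\xi^{(n)}_{x}=\delta_{[\phi_{n}(x)]_{\cS}}$. To see $\xi^{(n)}\in \Meas(\ell^{2}([x]_{\cR}/\cS))$ I check that for every $\phi\in[\cR]$ the function $x\mapsto \xi^{(n)}_{x}([\phi(x)]_{\cS})=1_{\cS}(\phi(x),\phi_{n}(x))$ is measurable, which follows because $\cS\subseteq X\times X$ is Borel and $\phi,\phi_{n}$ are measurable. Since $\{\phi_{n}(x):n\in\N\}=[x]_{\cR}$ for a.e.\ $x$, the set $\{[\phi_{n}(x)]_{\cS}:n\in\N\}$ equals $[x]_{\cR}/\cS$ for a.e.\ $x$, so $\{\xi^{(n)}_{x}\}_{n}$ spans $\ell^{2}([x]_{\cR}/\cS)$, verifying the third axiom.

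Next I would check the inner product axiom. The key step is to construct a \emph{measurable selection} of representatives of $\cS$-classes inside each $\cR$-orbit. For each $n\in\N$, let
\[
A_{n}=\bigl\{x\in X:(\phi_{m}(x),\phi_{n}(x))\notin\cS\text{ for all }m<n\bigr\},
\]
a Borel set. Then, for a.e.\ $x$, the indices $\{n:x\in A_{n}\}$ enumerate $[x]_{\cR}/\cS$ without repetition via $n\mapsto [\phi_{n}(x)]_{\cS}$. For $\xi,\eta\in\Meas(\ell^{2}([x]_{\cR}/\cS))$ I would therefore write
\[
\langle \xi_{x},\eta_{x}\rangle=\sum_{n\in\N}1_{A_{n}}(x)\,\xi_{x}([\phi_{n}(x)]_{\cS})\,\overline{\eta_{x}([\phi_{n}(x)]_{\cS})}.
\]
Each summand is measurable by the defining property of $\Meas$ (applied to the specific $\phi=\phi_{n}\in[\cR]$) together with measurability of $1_{A_{n}}$, so the countable sum is measurable.

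Finally, the maximality axiom follows immediately from the previous two steps: if $\eta\in\prod_{x\in X}\ell^{2}([x]_{\cR}/\cS)$ satisfies $x\mapsto\langle\xi_{x},\eta_{x}\rangle$ measurable for every $\xi\in\Meas$, then applying this to the particular $\xi^{(n)}$ constructed above yields that $x\mapsto\eta_{x}([\phi_{n}(x)]_{\cS})=\langle\xi^{(n)}_{x},\eta_{x}\rangle$ is measurable for every $n$. Since $\{\phi_{n}\}$ enumerates a countable group generating $\cR$, every $\phi\in[\cR]$ has $\phi(x)\in\{\phi_{n}(x):n\in\N\}$ for a.e.\ $x$, and a routine partition-into-level-sets argument (using Borelness of $\{x:\phi(x)=\phi_{n}(x)\}$) shows that $x\mapsto\eta_{x}([\phi(x)]_{\cS})$ is measurable for every $\phi\in[\cR]$, so $\eta\in\Meas$. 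The only step that requires real care is the enumeration of $\cS$-classes inside $\cR$-orbits in a Borel way; once the sets $A_{n}$ are in hand, the rest is bookkeeping.
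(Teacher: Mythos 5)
Your proof is correct, and it follows the same skeleton as the paper's: fix a countable group $\Gamma=\{\phi_{n}\}$ generating $\cR$, take the delta vectors $\xi^{(n)}_{x}=\delta_{[\phi_{n}(x)]_{\cS}}$ as the fundamental sequence, and use the partition of $X$ into level sets $\{x:\psi(x)=\phi_{n}(x)\}$ to reduce the quantifier ``for all $\phi\in[\cR]$'' in the definition of $\Meas$ to ``for all $\phi\in\Gamma$.'' The one genuine difference is at the end: the paper, after checking that the pairwise inner products $\ip{\xi^{(n)}_{x},\xi^{(m)}_{x}}=1_{\cS}(\phi_{n}(x),\phi_{m}(x))$ are measurable and that the fibers are spanned, delegates the verification of the axioms to a general result (\cite[Lemma IV.8.10]{TakesakiI}) which produces the measurable field structure generated by such a fundamental family. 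You instead verify the three axioms directly, and the extra ingredient you need for the first axiom --- the measurable first-hitting-index sets $A_{n}$ giving a Borel enumeration of $[x]_{\cR}/\cS$ without repetition, so that $\ip{\xi_{x},\eta_{x}}$ becomes a countable sum of manifestly measurable terms --- is exactly the content that the citation hides. This makes your argument self-contained at the cost of a little bookkeeping; both routes are sound. (Two cosmetic points: $\ip{\xi^{(n)}_{x},\eta_{x}}$ is the complex conjugate of $\eta_{x}([\phi_{n}(x)]_{\cS})$ under the usual convention, which of course does not affect measurability; and the absolute convergence of your sum, needed to conclude measurability of the limit, follows from Cauchy--Schwarz since $\xi_{x},\eta_{x}\in\ell^{2}$.)
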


\begin{proof}
Fix a countable subgroup $\Gamma\leq [\cR]$ so that $\Gamma x=[x]_{\cR}$ for almost every $x$. We start by proving the following claim.

\emph{Claim: a vector $\xi=(\xi_{x})_{x}\in \prod_{x\in X}\ell^{2}([x]_{\cR}/\cS)$ is in $\Meas(\ell^{2}([x]_{\cR}/\cS))$ if and only if the map $x\mapsto \xi_{x}([\phi(x)]_{\cS})$ is measurable for every $\phi \in \Gamma$.}
If $\xi$ is measurable then, by definition, $x\mapsto \xi_{x}([\phi(x)]_{\cS})$ is measurable for every $\phi\in[\cR]$. In particular, this is true for $\phi\in\Gamma$. Conversely, suppose that $x\mapsto \xi_{x}([\phi(x)]_{\cS})$ is measurable for all $\phi \in\Gamma.$ Fix a $\psi\in [\cR]$. We then have to show that $x\mapsto \xi_{x}([\psi(x)]_{\cS})$ is measurable. Since $\Gamma x=[x]_{\cR}$ for almost every $x\in X$, we may find a disjoint family of sets $(E_{\phi})_{\phi\in\ \Gamma}$ with  $E_{\phi}\subseteq \{x\in X:\psi(x)=\phi(x)\}$ and so that $\bigsqcup E_{\phi}$ is a conull subset of $X$. Then
\[\xi_{x}([\psi(x)]_{\cS})=\sum_{\phi \in\Gamma}1_{E_{\phi}}(x)\xi_{x}([\phi(x)]_{\cS})\]
for almost every $x$ (the sum above converges since the $E_{\phi}$ are disjoint). Since $\Gamma$ is countable, this proves that $x\mapsto \xi_{x}([\psi(x)]_{\cS})$ is measurable, and this proves the claim.

Having shown the claim, for $\phi\in \Gamma$ define $\zeta_{\phi}\in \prod_{x\in X}\ell^{2}([x]_{\cR}/\cS)$ by $\zeta_{\phi,x}=\delta_{[\phi(x)]_{\cS}}$. Then
\[\ell^{2}([x]_{\cR}/\cS)=\overline{\Span\{\zeta_{\phi,x}:\phi \in\Gamma\}}^{\|\cdot\|_{2}}\]
for almost every $x\in X$, and also
$\xi_{x}([\phi(x)]_{\cS})=\ip{\xi_{x},\delta_{\phi(x)}}$
for all $\phi\in [\cR]$. Moreover, for $\phi,\psi\in\Gamma$ we have that
\[\ip{\zeta_{\phi,x},\zeta_{\psi,x}}=1_{S}(\phi(x),\psi(x))\]
which is a measurable function of $x$.
The lemma now follows from countability of $\Gamma$ and \cite[Lemma IV.8.10]{TakesakiI}.
\end{proof}

We use the following well known Lemma (see e.g. \cite[Equation 2.8]{HutchcroftIsing} for a proof), which is the main way we will relate the operator norm of the Markov operator $\lambda_{\cS}(\nu)$ to the growth of the matrix coefficients $\ip{\lambda_{\cS}(\nu)^{2n}\xi,\xi}$.  

\begin{lem}\label{L:norm on dense set of vectors}
Let $\mathcal{H}$ be a Hilbert space and $T\in B(\mathcal{H})$ self-adjoint. Let  $K$ be an index set, and let $(\xi_{k})_{k\in K}$ be a $K$-tuple of vectors in $\mathcal{H}$ so that $\mathcal{H}=\overline{\Span\{\xi_{k}:k\in K\}}.$ Then
\[\|T\|=\sup_{k\in K}\lim_{n\to\infty}\ip{T^{2n}\xi_{k},\xi_{k}}^{1/2n}.\]

\end{lem}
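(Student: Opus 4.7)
The plan is to use the spectral theorem for $T$ to reduce the statement to a question about measures on the real line. Since $T$ is self-adjoint, for each vector $\xi_k$ we can form the scalar spectral measure $\mu_k$ on $\sigma(T) \subseteq \R$ defined by $\mu_k(A) = \ip{E_T(A)\xi_k, \xi_k}$, where $E_T$ is the projection-valued spectral measure of $T$. The key identity is
\[\ip{T^{2n}\xi_k,\xi_k} = \int t^{2n}\,d\mu_k(t),\]
and the standard $L^{p}\to L^{\infty}$ argument for finite measures gives
\[r_k := \lim_{n\to\infty}\ip{T^{2n}\xi_k,\xi_k}^{1/2n} = \max\{|t| : t \in \supp(\mu_k)\}.\]
In particular the limit on the right of the lemma exists, and since $\supp(\mu_k) \subseteq \sigma(T)$ we get $r_k \leq \|T\|$ for every $k$, yielding the easy inequality $\sup_k r_k \leq \|T\|$.

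For the reverse inequality I would argue by contradiction, assuming $r := \sup_k r_k < \|T\|$ and fixing $\epsilon > 0$ with $r+\epsilon < \|T\|$. Let $P = E_T(\{t\in\R : |t|>r+\epsilon\})$. For every $k$, the containment $\supp(\mu_k)\subseteq [-r,r]$ gives
\[\|P\xi_k\|^{2} = \ip{P\xi_k,\xi_k} = \mu_k(\{|t|>r+\epsilon\}) = 0,\]
so $P\xi_k=0$. Since $P$ is bounded and $\overline{\Span\{\xi_k : k\in K\}} = \mathcal{H}$, it follows that $P=0$ on all of $\mathcal{H}$.

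On the other hand, because $T$ is self-adjoint, $\|T\|$ equals the spectral radius, so $\sigma(T)$ contains some point $\lambda$ with $|\lambda|>r+\epsilon$. Choosing a small open interval $U \subseteq \{|t|>r+\epsilon\}$ around $\lambda$, the standard fact that $E_T(U)\ne 0$ whenever $U$ is open and meets $\sigma(T)$ (proved by applying Borel functional calculus to a nonzero continuous bump supported in $U$) yields $0\ne E_T(U) \leq P$, contradicting $P=0$. I expect the only nontrivial step to be the verification that $P\xi_k=0$ for every $k$ implies $P=0$; this is where the spanning hypothesis on $(\xi_k)_{k\in K}$ is used, but it is immediate from boundedness of $P$ and density of $\Span\{\xi_k\}$. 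Everything else is routine spectral theory.
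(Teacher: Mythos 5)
Your proof is correct, but it takes a different route from the paper's. You work with the projection-valued spectral measure of $T$: identifying $r_k:=\lim_n\ip{T^{2n}\xi_k,\xi_k}^{1/2n}$ with the radius of $\supp(\mu_k)$, you show that if $r=\sup_k r_k<\|T\|$ then the spectral projection $P=E_T(\{|t|>r+\epsilon\})$ annihilates every $\xi_k$, hence vanishes by density, contradicting the fact that $\|T\|$ is the spectral radius and open sets meeting $\sigma(T)$ carry nonzero spectral projections. The paper instead avoids the projection-valued measure entirely: it fixes a unit vector $\xi=\sum_k\lambda_k\xi_k$ in the (dense) span, uses the scalar spectral measure only for the moment monotonicity $\|T\xi\|=\ip{T^2\xi,\xi}^{1/2}\leq\ip{T^{2n}\xi,\xi}^{1/2n}$, and then expands $\ip{T^{2n}\xi,\xi}$ and applies Cauchy--Schwarz to bound it by $M^2N^2\max_{k,l}\ip{T^{2n}\xi_k,\xi_k}^{1/2}\ip{T^{2n}\xi_l,\xi_l}^{1/2}$, whose $2n$-th root tends to at most $\sup_k r_k$. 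Your argument is shorter and more conceptual (the supports of the scalar spectral measures of a spanning family must reach the edge of the spectrum); the paper's is more elementary in its spectral-theoretic input and produces an explicit finite-combination estimate, a style it reuses elsewhere (e.g., in the walk-growth theorem). One cosmetic point in your write-up: for $\xi_k=0$ the set $\supp(\mu_k)$ is empty and the ``max'' should be read as $0$; this does not affect the argument.
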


In order to apply this in the context of a direct integral of Hilbert spaces, the following density criterion will be useful.

\begin{lem}\label{L:dense set from graphing}
Let $(X,\mu)$ be a standard probability space and let $(\mathcal{H}_{x})_{x\in X}$ be a measurable family of Hilbert spaces over $X$ and set $\mathcal{H}=\int_{X}^{\bigoplus}\mathcal{H}_{x}.$ Suppose we have a sequence $\xi_{n}=(\xi_{n,x})_{x\in X}\in \Meas(\mathcal{H}_{x})$ such that
\[\mathcal{H}_{x}=\overline{\Span\{\xi_{n,x}:n\in \N\}}\]
for almost every $x\in X$. Then
\[\mathcal{H}=\overline{\Span\{1_{A}\xi_{n}:n\in \N, A\subseteq X\mbox{ is measurable}\}}.\]
\end{lem}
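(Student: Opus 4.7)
The plan is to show that the closed linear span on the right-hand side, call it $\mathcal{K}$, has trivial orthogonal complement in $\mathcal{H}$. So I fix $\eta = (\eta_x)_{x\in X}\in \mathcal{H}$ with $\eta \perp 1_A\xi_n$ for every $n\in\N$ and every measurable $A\subseteq X$ for which $1_A\xi_n\in \mathcal{H}$, and aim to conclude $\eta_x = 0$ for almost every $x$. The basic idea is that this orthogonality, tested against all measurable $A$, will force pointwise orthogonality $\eta_x\perp \xi_{n,x}$ a.e., after which the density assumption $\mathcal{H}_x=\overline{\Span\{\xi_{n,x}:n\in\N\}}$ finishes the job.

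The one genuine subtlety is that the vectors $\xi_n$ need not themselves lie in $\mathcal{H}$, so $1_A\xi_n$ need not be in $\mathcal{H}$ for arbitrary $A$. To handle this I would truncate: the function $x\mapsto \|\xi_{n,x}\|^2 = \langle \xi_{n,x},\xi_{n,x}\rangle$ is measurable by the definition of a measurable field, so the sets
\[A_{n,m} = \{x\in X : \|\xi_{n,x}\|\leq m\}\]
are measurable, and $1_{A_{n,m}}\xi_n \in \mathcal{H}$ since $\int_X\|1_{A_{n,m}}(x)\xi_{n,x}\|^2\,d\mu(x)\leq m^2$. Moreover $\bigcup_m A_{n,m}$ is conull for each $n$.

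Next, for any measurable $B\subseteq X$, the vector $1_B(1_{A_{n,m}}\xi_n)=1_{B\cap A_{n,m}}\xi_n$ lies in $\mathcal{K}$, so the orthogonality hypothesis yields
\[\int_{B\cap A_{n,m}}\langle \eta_x,\xi_{n,x}\rangle\,d\mu(x)=0\]
for all measurable $B$. Note the integrand is in $L^1$ by Cauchy--Schwarz since $\eta\in\mathcal H$ and $1_{A_{n,m}}\xi_n\in\mathcal H$. Since $B$ is arbitrary, this forces $1_{A_{n,m}}(x)\langle \eta_x,\xi_{n,x}\rangle = 0$ for almost every $x\in X$. Letting $m\to\infty$ and using $\mu(X\setminus\bigcup_m A_{n,m})=0$, we obtain $\langle \eta_x,\xi_{n,x}\rangle=0$ for almost every $x$.

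Finally, since this null set depends on $n$ but there are only countably many $n$, I take a countable intersection to find a single conull set $X_0\subseteq X$ on which $\langle \eta_x,\xi_{n,x}\rangle=0$ for every $n\in\N$ simultaneously, and on which also $\overline{\Span\{\xi_{n,x}:n\in\N\}}=\mathcal{H}_x$. For $x\in X_0$ this forces $\eta_x=0$, hence $\eta=0$ in $\mathcal{H}$. The only real obstacle is the truncation step needed to make sense of the orthogonality relations in $\mathcal{H}$; once that is in place the argument is routine.
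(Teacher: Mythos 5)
Your proposal is correct and follows essentially the same route as the paper: show the orthogonal complement of the span is trivial by deducing the fiberwise orthogonality $\ip{\eta_{x},\xi_{n,x}}=0$ a.e.\ from the vanishing of $\int_{A}\ip{\eta_{x},\xi_{n,x}}\,d\mu(x)$ over all measurable $A$, then invoke density of $\{\xi_{n,x}\}$ in each $\mathcal{H}_{x}$. Your truncation by the sets $A_{n,m}$ is a careful extra step the paper omits (it implicitly treats $1_{A}\xi_{n}$ as already lying in $\mathcal{H}$, which holds in the intended application where the $\xi_{n,x}$ are unit vectors), but it does not change the substance of the argument.
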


\begin{proof}
Suppose that $\eta\in \cH$ and that $\ip{\eta,1_{A}\xi_{n}}=0$ for every $n\in \N$ and every measurable $A\subseteq X$. Then for every measurable $A\subseteq X$ and every $n\in \N$ we have
\[\int_{A}\ip{\eta_{x},\xi_{n,x}}\,d\mu(x)=0.\]
Since this holds for every $A$,
applying this with $A$ being 
$\{x\in X:\Re(i^{j}\ip{\eta_{x},\xi_{n,x}})>0\}$ 
for $j=0,1,2,3$, and taking real and imaginary parts of the above integral shows that
that for every $n\in \N$ we have that $\ip{\eta_{x},\xi_{n,x}}=0$ for almost every $x\in X$. 
By countability, for almost every $x\in X$ we have $\ip{\eta_{x},\xi_{n,x}}=0$ for all $n\in \N$. Since $\mathcal{H}_{x}=\overline{\Span\{\xi_{n,x}:n\in \N\}}$ for almost every $x\in X$, we deduce that $\eta_{x}=0$ for almost every $x\in X$, i.e. $\eta=0$ as an element of $\cH$. Thus we have shown that the only vector in $\cH$ orthogonal to $\{1_{A}\xi_{n}:n\in \N, A\subseteq X\mbox{ is measurable}\}$ is the zero vector, and this implies that
\[\mathcal{H}=\overline{\Span\{1_{A}\xi_{n}:n\in \N, A\subseteq X\mbox{ is measurable}\}}.\]
\end{proof}

\begin{proof}[Proof of Theorem \ref{T:existence of the relative spectral radius} (\ref{I:averageversion subrelation})]
Let $\xi\in L^{2}(\cR/\cS)$ be the measurable vector field given by $\xi_{x}=\delta_{[x]_{\cS}}.$ By direct calculation,
\[\ip{\lambda_{S}(\nu)^{2n}\xi,\xi}=\int p_{2n,x,\cS}\,d\mu(x).\]
By the spectral theorem, there is a probability measure $\eta$ on $[-\|\lambda_{S}(\nu)\|,\|\lambda_{S}(\nu)\|]$ so that
\[\ip{\lambda_{S}(\nu)^{2n}\xi,\xi}=\int t^{2n}\,d\eta(t).\]
From this, we see that $\lim_{n\to\infty}\ip{\lambda_{S}(\nu)^{2n}\xi,\xi}^{\frac{1}{2n}}$ exists and is the $L^{\infty}$-norm of $t$ with respect to $\eta$. Combining these results we see that
\[\lim_{n\to\infty}\left(\int p_{2n,x,\cS}\,d\mu(x)\right)^{1/2n}\]
exists. Call this limit $\rho(\cR/\cS,\nu)$ as in the statement of the Theorem.

We now turn to the proof that $\rho(\cR/\cS,\nu)=\|\lambda_{\cS}(\nu)\|$. It follows from the logic in the preceding paragraph that $\rho(\cR/\cS,\nu)\leq \|\lambda_{\cS}(\nu)\|$. Let $\Gamma$ be the subgroup of $[\cR]$ generated by the support of $\nu$. Since $\nu$ generates $\cR$, we have $[x]_{\cR}=\Gamma x$ for almost every $x\in X$. By Lemma \ref{L:dense set from graphing},
\[\{1_{A}\lambda_{S}(\phi)\xi:\phi\in \Gamma, A\subseteq X\mbox{ is measurable}\}\]
has dense linear span in $\cH$. It thus suffices, by Lemma \ref{L:norm on dense set of vectors}, to prove that \[\lim_{n\to\infty}\ip{\lambda_{\cS}(\nu)^{2n}1_{A}\lambda_{\cS}(\phi)\xi,1_{A}\lambda_{\cS}(\phi)\xi}^{\frac{1}{2n}}\leq \rho(\cR/\cS,\nu)\]
for every measurable $A\subseteq X$ and $\phi\in \Gamma$. It is direct to see that
\[\ip{\lambda_{\cS}(\nu)^{2n}1_{A}\lambda_{\cS}(\phi)\xi,1_{A}\lambda_{\cS}(\phi)\xi}\leq \ip{\lambda_{\cS}(\nu)^{2n}\lambda_{\cS}(\phi)\xi,\lambda_{\cS}(\phi)\xi}\]
for every measurable $A\subseteq X$. So it simply suffices to show that
\begin{equation}\label{eqn: reduction to generating graphing}\lim_{n\to\infty}\ip{\lambda_{\cS}(\nu)^{2n}\lambda_{\cS}(\phi)\xi,\lambda_{\cS}(\phi)\xi}^{\frac{1}{2n}}\leq \rho(\cR/\cS,\nu),\textnormal{ for every $\phi\in\Gamma.$}
\end{equation}

To prove (\ref{eqn: reduction to generating graphing}), fix $\phi \in \Gamma.$ Let $\phi^{*}(\cS)$ be the subrelation $\phi^{*}(\cS)=\{(\phi(x),\phi(y)):(x,y)\in \cS\}$. By direct computation,
\[\ip{\lambda_{\cS}(\nu)^{2n}\lambda_{\cS}(\phi)\xi,\lambda_{\cS}(\phi)\xi}=\int p_{2n,\phi(x),\phi^{*}(\cS)}\,d\mu(x).\]
Choose $k\in \N$ so that $c=\nu^{*k}(\{\phi\})>0$. Then for every $n\in \N$ we have that
\begin{align*}p_{2(n+k),x,\cS}&=\sum_{y\in [x]_{\cS}}p_{2(n+k),x,y}\geq \sum_{y\in [x]_{\cS}}p_{k,x,\phi(x)}p_{2n,\phi(x),\phi(y)}p_{k,\phi(y),y}\\ &\geq c^{2}\sum_{y\in [x]_{\cS}}p_{2n,\phi(x),\phi(y)}=c^{2}p_{2n,\phi(x),\phi^{*}(\cS)}.
\end{align*}
Integrating both sides we obtain
\[\ip{\lambda_{\cS}(\nu)^{2n}\lambda_{\cS}(\phi)\xi,\lambda_{\cS}(\phi)\xi}\leq c^{-2}\ip{\lambda_{\cS}(\nu)^{2(n+k)}\xi,\xi}.\]
Thus,
\begin{align*}\lim_{n\to\infty}\ip{\lambda_{\cS}(\nu)^{2n}\lambda_{\cS}(\phi)\xi,\lambda_{\cS}(\phi)\xi}^{\frac{1}{2n}}&\leq\lim_{n\to\infty}c^{-1/n}\left[\ip{\lambda_{\cS}(\nu)^{2(n+k)}\xi,\xi}^{\frac{1}{2(n+k)}}\right]^{\frac{n+k}{n}}\\&=\rho(\cR/\cS,\nu).\end{align*}
This proves (\ref{eqn: reduction to generating graphing}), and so completes the proof of Theorem \ref{T:existence of the relative spectral radius} (\ref{I:averageversion subrelation}).

\end{proof}

As mentioned in the introduction, Theorem \ref{T:existence of the relative spectral radius} and our later work in \ref{sec: hyperfinite relations} can be used to
recover a result due to Schramm used in
\cite{KozmaPerc},\cite[Lemma 6.4]{Hutchcroft}, \cite[Section 3]{Hutchcroft_Trees}. Indeed, give a percolation of  a connected, regular, transitive graph $G$ by Section \ref{sec:translation} one can build an inclusion $\cS\leq \cR$ of relations on a standard probability space as well as a symmetric probability measure $\nu$ on $[\cR]$ so that in the notation of \cite[Lemma 6.4]{Hutchcroft}, we have
\[\int p_{n,x,\cS}\,d\mu(x)=\E[\tau_{p}(X_{0},X_{n})].\]
For $p\in (0,1)$, consider Bernoulli(p) edge percolation, where each edge is kept with probability $p$, let $\cS_{p}\leq \cR_{p}$ be the corresponding inclusion of equivalence relations. Set
\[p_{c}=\inf\{p: \mbox{a.e. connected component in Bernoulli(p) percolation is finite}\}.\]
In the setup of the proof of Theorem \ref{T:existence of the relative spectral radius} (\ref{I:averageversion subrelation}), we have that
\[\int p_{2n,x,\cS}\,d\mu(x)=\ip{\lambda_{\cS}(\nu)^{2n}\xi,\xi}.\]
Since $\lambda_{\cS}(\nu)$ is self-adjoint, the spectral theorem tells us that
\[\left(\int p_{2n,x,\cS}\,d\mu(x)\right)^{1/2n}=\ip{\lambda_{\cS}(\nu)^{2n}\xi,\xi}^{1/2n}\]
is increasing in $n$ and Theorem \ref{T:existence of the relative spectral radius} (\ref{I:averageversion subrelation}) characterizes its supremum as $\|\lambda_{\cS}(\nu)\|$. Additionally, for every $n\in \N$ we by Cauchy-Schwarz that
\[\left(\int p_{2n,x,\cS}\,d\mu(x)\right)^{1/n}=\ip{\lambda_{\cS}(\nu)^{n}\xi,\xi}^{1/n}\leq \|\lambda_{\cS}(\nu)^{n}\xi\|^{1/n}=\ip{\lambda_{\cS}(\nu)^{2n}\xi,\xi}^{1/2n},\]
where in the last step we use self-adjointness of $\lambda_{\cS}(\nu)$.
The construction of the inclusion $\cS\leq \cR$ forces that the spectral radius of $G$ is equal to the co-spectral radius of $\cT\leq \cR$ where $\cT=\{(x,x):x\in X\}$ is the trivial relation. For Bernoulli(p) percolation with $p<p_{c}$, finiteness of the clusters tells us that $[x]_{\cS_{p}}$ is finite for almost every $x\in X$. We will later show (see Proposition \ref{prop:hyperfintie doesn't change sr}) that this implies that
$\rho(\cR/\cS_{p},\nu)=\|\lambda_{\cS_{p}}(\nu)\|\leq \|\lambda_{\cT}(\nu)\|=\rho(\cR,\nu)$. Using the standard monotone coupling of Bernoulli percolation \cite[Section 5.2]{LPBook}, it is direct to see that $\|\lambda_{\cS_{p}}(\nu)\|$ is semicontinuous and that $\|\lambda_{\cS_{p_{c}}}(\nu)\|\leq \|\lambda_{\cT}(\nu)\|$.  Thus we obtain the estimate of Schramm for connected,  transitive graphs. With minor modifications, our results work for random walks on finite cost graphings (see Example \ref{example: finite cost} of Section \ref{sec: 23 method subsection}). In this manner, we can recover the same estimate of Schramm when $G$ is a connected, locally finite, transitive graph. As mentioned in the introduction, in the context of percolation all our proofs can be rephrased without equivalence relations and can be done in the language of percolation theory.

\subsection{The 2--3 method and Proof of Theorem \ref{T:existence of the relative spectral radius} (\ref{item:almost sure cs radius})}\label{sec: 23 method subsection}

We now explain how to deduce the existence of the  pointwise limit defining the co-spectral radius. We first state the general Theorem behind this existence and then explain why it applies to our setting, as well as to more general situations. For notation, if $f,g\colon \cR\to [0,\infty]$ are measurable, then we define their \emph{convolution} to be the function $f*g\colon \cR\to [0,\infty]$ given by
\[(f*g)(x,y)=\sum_{z\in [x]_{\cR}}f(x,z)g(z,y).\]
Given a measurable $\pi\colon X\to \C$, we say that $f\colon \cR\to [0,\infty]$ is \emph{$\pi$-symmetric} if $\pi(x)f(x,y)=\pi(y)f(y,x)$ for almost every $(x,y)\in \cR$. If $\pi=1$, we just say $f$ is \emph{symmetric}.

\begin{thm}\label{thm:generalized 23}
Let $\cR$ be a discrete, measure-preserving equivalence relation over a standard probability space $(X,\mu)$, and fix $\pi\in L^{1}(X,\mu)$ with $\pi(x)\in (0,\infty)$ for almost every $x\in X$.
Let $f_{k}\colon \cR\to [0,\infty]$ for $k\geq 1$ be a sequence of  measurable functions such that:
\begin{enumerate}[(a)]
\item $f_{k}$ is $\pi$-symmetric for all $k\in \N$,
\item for all $l,k\in \N$ we have $\sum_{y\in [x]_{\cR}}(f_{l}*f_{k})(x,y)\leq \sum_{y\in [x]_{\cR}}f_{l+k}(x,y)$ for almost every $x\in X$, \label{item:subadditive convolution inequality}
\item for almost every $x\in X$ and every $k\in \N$ we have $0<\sum_{y\in [x]_{\cR}}f_{k}(x,y)<\infty$, \label{item:almost sure convergence}
\item there is a measurable $D\colon X\to (0,\infty)$ so that  $\sum_{y\in [x]_{\cR}}f_{l+k}(x,y)\geq D(x)^{l}\sum_{y\in [x]_{\cR}}f_{k}(x,y)$ for almost every $x\in X$ and every $l,k\in \N$. \label{item:small log scale errors}
\end{enumerate}
Then
\[\widetilde{f}(x)=\lim_{k\to\infty}\left(\sum_{y\in [x]_{\cR}}f_{k}(x,y)\right)^{1/k}\]
exists and is positive almost surely. Further:
\begin{enumerate}[(i)]
\item for every $k\in \N$  \[\int\pi(x)\frac{\sum_{y\in [x]_{\cR}}f_{k}(x,y)}{\widetilde{f}(x)^{k}}\,d\mu(x)\leq \int \pi\,d\mu,\] \label{item: average monotonicity}
\item $\lim_{k\to\infty}\left(\int \pi(x)\sum_{y\in [x]_{\cR}}f_{k}(x,y)\,d\mu(x)\right)^{1/k}=\|\widetilde{f}\|_{\infty}.$ \label{item:integral formula for sup}
\end{enumerate}
\end{thm}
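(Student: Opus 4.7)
The plan is to set $S_k(x) := \sum_{y \in [x]_\cR} f_k(x,y)$ and to prove that $\widetilde{f}(x) := \lim_k S_k(x)^{1/k}$ exists a.s.\ by the $2$-$3$ method, then deduce (i) and (ii). First I would establish the averaged ``subconvolution'' inequality: summing (b) over $y$ yields $\sum_z f_l(x,z) S_k(z) \leq S_{l+k}(x)$ pointwise. Multiplying by $\pi(x)$, rewriting $\pi(x) f_l(x,z) = \pi(z) f_l(z,x)$ via (a), and swapping arguments by the mass transport principle produces
\[
\int \pi(x)\, S_l(x)\, S_k(x)\, d\mu(x) \;\leq\; \int \pi(x)\, S_{l+k}(x)\, d\mu(x).
\]
Setting $l=k$ and applying Cauchy--Schwarz in $L^{2}(\pi\, d\mu)$ yields $(\int \pi S_k)^2 \leq \|\pi\|_1 \int \pi S_{2k}$. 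Writing $A_k := \int \pi S_k\, d\mu$, this gives $A_{2k} \geq A_k^2/\|\pi\|_1$, and the analogous three-step version (from $(l,k) = (2n, n)$) provides a matching bound on $A_{3n}$. A Fekete-type argument along the scales $2^p 3^q$ (with condition (d) furnishing the one-step lower bound that interpolates between such scales) then produces the integral limit $M := \lim_k A_k^{1/k}$.

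The crux is pointwise convergence, which is precisely what the $2$-$3$ method was designed for. Following the walk-growth sketch in the introduction, I would fix $\epsilon > 0$ and try to show that the ``bad'' sets
\[
E_n^{\epsilon} := \{x : S_{2n}(x) < (1-\epsilon) S_n(x)^2\}, \qquad F_n^{\epsilon} := \{x : S_{3n}(x) < (1-\epsilon) S_n(x)^3\}
\]
have $\pi$-mass summable along $n = 2^p 3^q$. The estimate for $E_n^{\epsilon}$ begins with the pointwise inequality $S_{2n}(x) \geq \sum_z f_n(x,z) S_n(z)$, then applies a weighted mass-transport comparison between $S_n(z)$ and $S_n(x)$ through the kernel $f_n(x,z)$ in order to realize the averaged deficit $\int \pi(S_{2n} - S_n^2)\, d\mu \geq 0$ as a concentration statement \emph{relative to} $S_n(x)^2$. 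The analogous bound for $F_n^{\epsilon}$ follows from the iterated pointwise inequality $S_{3n}(x) \geq \sum_{z,w} f_n(x,z) f_n(z,w) S_n(w)$. By Borel--Cantelli, outside a null set both pointwise inequalities hold simultaneously for all sufficiently large $n$ of the form $2^p 3^q$; along this subsequence $\tfrac{1}{n}\log S_n(x)$ is essentially nondecreasing and bounded, hence convergent, and condition (d) interpolates the limit to arbitrary $k \in \N$ because it prevents $\tfrac{1}{k}\log S_k(x)$ from dropping by more than $O(1/k)$ per unit $k$.

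With $\widetilde f$ established as a genuine a.s.\ limit, statements (i) and (ii) follow by self-consistency. For (i) one first notes that $\widetilde f$ is $\cR$-invariant (from (b) iterated together with (d)), so the renormalized kernel $h_k(x,y) := f_k(x,y)/\widetilde f(x)^k$ still satisfies the hypotheses (a)--(d), and the averaged subconvolution inequality for $h_k$ combined with a self-bootstrapping argument on $\int \pi \widetilde S_k\, d\mu$ produces the desired uniform bound $\int \pi S_k/\widetilde f^k\, d\mu \leq \|\pi\|_1$. For (ii), the upper bound $\limsup A_k^{1/k} \leq \|\widetilde f\|_\infty$ is immediate from (i), and the lower bound is a standard Egorov-plus-Fatou argument on the positive-measure subset where $\widetilde f$ approaches $\|\widetilde f\|_\infty$. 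The main obstacle is the pointwise estimate $\mu(E_n^{\epsilon}) \to 0$: the averaged inequality $\int \pi(S_{2n} - S_n^2)\, d\mu \geq 0$ is much weaker than a pointwise ``most-$x$'' bound because $S_n(x)^2$ varies on unboundedly many scales, so a naive Markov estimate is useless. The argument must use the mass-transport symmetry in a sharp, scale-sensitive way, effectively exploiting that the expected value of $S_n$ under the probability kernel $f_n(x,\cdot)/S_n(x)$ concentrates around $S_n(x)$ itself.
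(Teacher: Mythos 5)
Your skeleton is the right one and matches the paper's: approximate supermultiplicativity at scales $2$ and $3$ via Cauchy--Schwarz plus the mass-transport principle, Borel--Cantelli along $\{2^p3^q\}$, interpolation to all $k$ via hypothesis (d). But the quantitative form you give at the crux is a genuine gap. Writing $S_k(x)=\sum_{y\in[x]_{\cR}}f_k(x,y)$, the two-step inequality one can actually prove is
\[
S_k(x)^2\;\le\;\phi_k(x)\,S_{2k}(x),\qquad \phi_k(x)=\sum_{y\in[x]_{\cR}}f_k(x,y)\,S_k(y)^{-1},
\]
by Cauchy--Schwarz applied to $\sum_y f_k(x,y)S_k(y)\le S_{2k}(x)$ (and a three-step analogue with a defect $\psi_k$). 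Mass transport and $\pi$-symmetry give only $\int\pi\phi_k\,d\mu=\int\pi\,d\mu$ for \emph{each} $k$; the defect $\phi_k$ does not concentrate near $1$. Consequently $E_n^{\epsilon}\subseteq\{\phi_n>(1-\epsilon)^{-1}\}$, and Markov bounds the $\pi$-mass of that set only by $(1-\epsilon)\int\pi\,d\mu$ --- not summable along any subsequence. The ``scale-sensitive concentration'' you invoke in your last sentence is precisely the missing ingredient, and it is false in general. The fix is to abandon the constant $(1-\epsilon)$: since $\sum_k k^{-2}\pi\phi_k$ and $\sum_k k^{-2}\pi\psi_k$ are integrable, for a.e.\ $x$ one has $\phi_k(x),\psi_k(x)\le k^2$ for all $k\ge k_0(x)$, i.e.\ the bad sets must be defined with a threshold $S_{2n}<n^{-2}S_n^2$ rather than $(1-\epsilon)S_n^2$. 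One then has to check (which you do not address) that the \emph{accumulated} defect $C_{p,q,k}=\prod_i\phi_{2^ik}^{2^{p-1-i}3^q}\prod_j\psi_{2^p3^jk}^{3^{q-1-j}}$ along the $2$--$3$ cascade satisfies $\frac{1}{2^p3^q}\log C_{p,q,k}\le B+2\log k$ for a universal $B$, so that it is washed out after taking $2^p3^qk$-th roots and letting $k\to\infty$. A polynomially growing defect is harmless; a defect within $(1-\epsilon)^{-1}$ of $1$ is unobtainable.

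Part (i) has a second gap. The self-bootstrap $B_{2k}\ge B_k^2/\|\pi\|_1$ for $B_k=\int\pi S_k\widetilde f^{-k}\,d\mu$ shows only that if some $B_k$ exceeds $\|\pi\|_1$ then $B_m^{1/m}$ stays bounded away from $1$ along $m=2^pk$; since Fatou gives lower bounds on $\liminf$ of integrals, nothing rules out $B_m$ growing exponentially while $S_m(x)^{1/m}/\widetilde f(x)\to1$ pointwise, so no contradiction is reached. The paper instead bounds $S_k(x)/S_{2^pk}(x)^{1/2^p}\le C_{p,0,k}(x)^{1/2^p}$ and applies the AM--GM inequality to convert the geometric mean of the $\phi_{2^{i+1}k}$'s into a convex combination whose $\pi$-integral is exactly $\int\pi\,d\mu$ by mass transport; letting $p\to\infty$ and applying Fatou yields (i). (Also, your claimed $\cR$-invariance of $\widetilde f$ holds only on the subrelation generated by the supports of the $f_k$, though this is not where the problem lies.) Your treatment of (ii) --- upper bound from (i), lower bound via Fatou and H\"older --- is sound once (i) is available, and your averaged inequality $\int\pi S_lS_k\,d\mu\le\int\pi S_{l+k}\,d\mu$ is correct.
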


The name 2--3 refers to the way we are proving Theorem \ref{thm:generalized 23}. In the proof we have two separate steps where we show that ``typically" $\sum_{y}f_{2k}(x,y)\sim (\sum_y f_k(x,y))^2$ and $(\sum_y f_{3k}(x,y))\sim (\sum_y f_k(x,y))^3$. Since $2,3$ generate a multiplicative semi-group which is asymptotically dense on the logarithmic scale, we are able to deduce that the exponential growth of $(\sum_y f_k(x,y))$ has a definite rate.

Before jumping into the proof of Theorem \ref{thm:generalized 23}, let us list several examples where it applies. For all of these examples, fix a discrete, measure-preserving equivalence relation $\cR$ over a standard probability space $(X,\mu)$ and fix a $\cS\leq \cR$.

\begin{example}
Fix a symmetric $\nu\in \Prob([\cR])$.
Define
\[f_{k}(x,y)=p_{2k,x,y}1_{\cS}(x,y).\]
It is direct to check that our hypotheses apply in this case with $\pi=1$, $D(x)=p_{2,x,x}$.
In this case
\[\sum_{y\in [x]_{\cS}}f_{k}(x,y)=p_{2k,x,\cS},\]
and we recover the existence of the pointwise co-spectral radius $\rho^{\cS}_{\nu}$. Moreover, item (\ref{item:integral formula for sup}) of Theorem \ref{thm:generalized 23} as well as Theorem \ref{T:existence of the relative spectral radius} (\ref{I:averageversion subrelation}) imply that
\[\|\lambda_{\cS}(\nu)\|=\rho(\cR/\cS,\nu)=\|\rho^{\cS}_{\nu}\|_{\infty}.\]
So we recover the operator norm of the Markov operator $\lambda_{\cS}(\nu)$ as the $L^{\infty}$-norm of $\rho^{\cS}_{\nu}$.
\end{example}

\begin{example}\label{example: finite cost}
Suppose that $\nu\colon \cR\to [0,1]$ is measurable and that $\sum_{y\in [x]_{\cR}}\nu(x,y)=1$ for almost every $x\in X$. Moreover, assume that there is a $\pi\colon X\to (0,\infty)$ with $\pi\in L^{1}(X,\mu)$ so that $\nu$ is $\pi$-symmetric.  Consider the Markov chain on $[x]_{\cR}$ with transition probabilities $\nu(x,y)$, and for $k\in \N$ and $(x,y)\in \cR$, let $p_{k,x,y}$ be the probability that the random walk corresponding to this Markov chain starting at $x$ is at $y$ after $k$ steps. Set
\[f_{k}(x,y)=p_{2k,x,y}1_{S}(x,y),\]
and $p_{2k,x,\cS}=\sum_{y\in [x]_{\cS}}p_{2k,x,y}$. Note that if $f,g\colon \cR\to [0,\infty]$ are $\pi$-symmetric, then so is $f*g$. Since $f_{k}=\nu^{*2k}1_{S},$ we see that $f_{k}$ is $\pi$-symmetric. Finally, observe that for every $x\in X,k\in \N$
\[0<p_{2,x,x}^{k}\leq p_{2k,x,x}\leq \sum_{y\in [x]_{\cR}}f_{k}(x,y)\leq 1.\]
So we deduce the existence of
\[\rho(\cR/\cS,\nu)=\lim_{k\to\infty}p_{2k,x,\cS}^{\frac{1}{2k}}.\]

A good example to keep in mind is the following.  Recall that the \emph{full pseudogroup}, denoted $[[\cR]]$, of $\cR$ is by definition the set of bimeasurable bijections $\phi\colon \dom(\phi)\to \ran(\phi)$ satisfying
\begin{itemize}
    \item $\dom(\phi),\ran(\phi)$ are measurable subsets of $X$,
    \item $\phi(x)\in [x]_{\cR}$ for almost every $x\in X$.
\end{itemize}
We identify $\phi,\psi\in [[\cR]]$ if $\mu(\dom(\phi)\Delta \dom(\psi))=0$ and if $\phi(x)=\psi(x)$ for almost every $x\in \dom(\phi)\cap \dom(\psi)$. For $\phi,\psi\in [[\cR]]$ we use $\phi\circ \psi$ for the element of $[[\cR]]$ whose domain is $\dom(\psi)\cap \psi^{-1}(\dom(\phi))$ and which satisfies $\phi\circ \psi(x)=\phi(\psi(x))$ for $x\in \dom(\psi)\cap \psi^{-1}(\dom(\phi))$. For $\phi\in [[\cR]]$, we let $\phi^{-1}$ be the element of $[[\cR]]$ with $\dom(\phi^{-1})=\ran(\phi)$, $\ran(\phi^{-1})=\dom(\phi)$, and so that $\phi^{-1}(\phi(x))=x$ for all $x\in \dom(\phi)$.
Given a countable $\Phi\subseteq [[\cR]]$ and $x\in X$ we define a graph $G_{\Phi,x}$ whose vertex set is $[x]_{\cR}$ and whose edge set is $\bigcup_{\phi \in \Phi}\{\{y,\phi^{\alpha}(y)\}:\alpha\in \{\pm 1\},y\in [x]_{\cR}\cap \dom(\phi^{\alpha})\}$. We say that $\Phi$ is a \emph{graphing} if $G_{\Phi,x}$ is connected for almost every $x\in X$. We define the \emph{cost} of a graphing to be
\[c(\Phi)=\sum_{\phi\in \Phi}\mu(\dom(\phi))\]
 This definition is due to  Levitt in \cite{Levitt} and the cost of a relation (which is by defining the infimum of the cost of its graphings) was further systematically studied in \cite{Gab1, GroceryList}. If $\Phi$ is a finite cost graphing, then for almost every $x\in X$ define $\nu(x,y)=\frac{1}{\deg_{G_{\Phi}}(x)}$ where $\deg_{G_{\Phi}}(x)$ is the degree of $x$ in the graph $G_{\Phi,x}$. Observe that $\nu$ is $\deg_{G,\Phi}$-symmetric. Since $\int \deg_{G_{\Phi}}\,d\mu\leq 2c(\Phi)<\infty$, we have a well-defined co-spectral radius for the simple random walk associated to finite cost graphings.

 Another good example is the following. Consider a symmetric $\nu\in \Prob([\cR])$, and $E\subseteq X$ a measurable set with $\mu(E)>0$. Assume that for almost every $x\in E$, we have that $\sum_{y\in [x]_{\cR}\cap E}p_{x,y}>0$ (e.g. this holds if the random walk is lazy). 
 
 For $x\in X$, define $\nu|_{E}\colon \cR\cap (E\times E)\to [0,1]$ by
 \[\nu|_{E}(x,y)=\frac{p_{x,y}}{\sum_{y\in [x]_{\cR}\cap E}p_{x,y}}.\]
 As above this defines a Markov chain on $[x]_{\cR}\cap E$ with transition probabilities given by $\nu|_{E}$. We have that $\nu|_{E}$ is $\pi$-symmetric, with $\pi(x,y)=\sum_{y\in [x]_{\cR}\cap E}p_{x,y}$. For $k\in \N$, we let $p^{\nu|_{E}}_{k,x,y}$ be the probability that the random walk starting at $x$ with these transition probabilities is at $y$ after $k$ steps.  Setting $p_{2k,x,\cS|_{E}}^{\nu|_{E}}=\sum_{y\in [x]_{\cS}\cap E}p^{\nu|_{E}}_{2k, x,y}$, we deduce the almost sure existence of the ``conditional" co-spectral radius
\[\rho^{\cS|_{E}}(x)=\lim_{k\to\infty}(p_{2k,x,\cS|_{E}}^{\nu|_{E}})^{\frac{1}{2k}}\]
for $x\in E$.
\end{example}

\begin{example}\label{example:local cs}
Fix a symmetric $\nu\in \Prob([\cR])$ and a measurable $E\subseteq X$ with $\mu(E)>0.$ Define
\[f_{k}(x,y)=p_{2k,x,y}1_{\cS}(x,y)1_{E}(x)1_{E}(y).\]
In this case for $x\in E$ we have
\[\sum_{y\in [x]_{\cS}}f_{k}(x,y)=p_{2k,x,\cS,E}\]
where $p_{2k,x,\cS,E}$ is the probability that the random walk corresponding to $\nu$ starting at $x$ is in $[x]_{\cS}\cap E$ after $2k$ steps. All of our hypotheses apply in this case with $X$ replaced with $E$, $\cR$ replaced with $\cR|_{E}$, and $\pi=1$.
So we recover the existence of the \emph{pointwise local co-spectral radius}
\[\rho_{E}^{\cS}(x)=\lim_{k\to\infty}p_{2k,x,\cS,E}^{1/2k},\]
 at least for $x\in E$. See Section \ref{sec:invariance of almost sure sr} for more details. In that section we will show more generally that
 \[\lim_{k\to\infty}p_{2k,x,\cS,E}^{1/2k}\]
 exists for almost every $x\in X$, see Corollary \ref{cor: local 23}. This specific example will be important for us when we show that the spectral radius is almost surely constant in the case that $\cS$ is either normal or ergodic. \end{example}

 We say that a $\nu\in \Prob([\cR])$ is \emph{lazy} if $\nu(\{\id\})>0$.

\begin{example}
Fix a lazy, symmetric $\nu\in \Prob([\cR])$,  and  a measurable $E\subseteq X$ with $\mu(E)>0.$ Define for $(x,y)\in \cR$:
\[p_{2k,x,y,E}=\sum_{x_{1},\cdots,x_{2k-1}\in E}p_{x,x_{1}}p_{x_{1},x_{2}}\cdots p_{x_{2k-1},y},\]
\[p_{2k,x,\cS|_{E}}=\sum_{\substack{x_{1},\cdots,x_{2k}\in E,\\ (x,x_{2k})\in \cS}}p_{x,x_{1}}p_{x_{1},x_{2}}\cdots p_{x_{2k-1},x_{2k}}.\]
Define
\[f_{k}(x,y)=p_{2k,x,y,E}1_{E}(x)1_{E}(y)1_{S}(x,y).\]
Then
\[\sum_{y\in [x]_{\cS}}f_{k}(x,y)=p_{2k,x,\cS|_{E}}.\]
Again, it is direct to check that the hypotheses of Theorem \ref{thm:generalized 23} apply with $\pi=1$. 
Note that laziness implies that 
\[p_{2k,x,\cS|_{E}}\geq p(x,x)^{2k}>0\]
for every $x\in E$ and we always have 
\[p_{2k,x,\cS|_{E}}\leq 1.\]
Thus (\ref{item:almost sure convergence}) of Theorem \ref{thm:generalized 23} holds.
So we deduce the existence of the ``restricted" co-spectral radius
\[\rho_{\cS|_{E}}(x)=\lim_{k\to\infty}p_{2k,x,\cS|_{E}}^{1/2k}\]
for $x\in E$.
It can be shown by the same method of proof as Theorem \ref{T:existence of the relative spectral radius} (\ref{I:averageversion subrelation}) that
\[\lim_{k\to\infty}\left(\int p_{2k,x,\cS|_{E}}\,d\mu(x)\right)^{\frac{1}{2k}}=\|1_{E}\lambda_{\cS}(\nu)1_{E}\|,\]
so
\[\|\rho_{\cS|_{E}}\|_{\infty}=\|1_{E}\lambda_{\cS}1_{E}\|.\]
So we again recover the norm of a corner of the Markov operator as the essential supremum of the restricted co-spectral radius.
\end{example}

\begin{example}\label{ex-23Walks}
Let $\varepsilon>0$ and let $\eta\colon \cR\to [\varepsilon,+\infty)$ be a measurable bounded symmetric function. Define for $(x,y)\in \cR$:
\[f_k(x,y):=\sum_{x_{1},\cdots,x_{2k-1}\in [x]_\cR}\eta_{x,x_{1}}\eta_{x_{1},x_{2}}\cdots\eta_{x_{2k-1},y}.\]
Theorem \ref{thm:generalized 23} yields almost sure existence of the growth exponent $\rho_\eta:=\lim_{k\to\infty} \left(\sum_{y\in [x]_\cR} f_k(x,y)\right)^{1/k}.$ Let $\Phi\subset \cR$ be a symmetric graphing generating $\cR$.
If we put $\eta=1_{\Phi}$, we get the existence of the growth exponent for the number of length $2k$ trajectories starting at $x$ in the graph induced by $\Phi$ on the equivalence class of $[x]_\cR$. In particular, for every unimodular random graph $(\cG,o)$, the number of walks of length $2k$ starting from $o$ has an exponential rate of growth almost surely. Note that such a result is definitely not true for any rooted bounded degree graph. This last example is discussed in more detail in Section \ref{sec: walk growth}.
\end{example}

Having explained why Theorem \ref{thm:generalized 23} implies the existence of the pointwise co-spectral radius, we now turn to the proof of Theorem \ref{thm:generalized 23}. The following is the main technical lemma behind the proof.

\begin{lem}\label{lem:weak submultiplicativity}
Let $(f_{k})_{k},\pi$ be as in Theorem \ref{thm:generalized 23}. For $k\in \N$, define $\widetilde{f}_{k}\colon X\to [0,\infty]$ by
\[\widetilde{f}_{k}(x)=\sum_{y\in [x]_{\cR}}f_{k}(x,y),\]
and define
\[\phi_{k}(x)=\sum_{y\in [x]_{\cR}}f_{k}(x,y)\widetilde{f}_{k}(y)^{-1},\]
\[\psi_{k}(x)=\widetilde{f}_{k}(x)\sum_{y\in [x]_{\cR}}f_{k}(x,y)\left(\sum_{z\in [y]_{\cR}}f_{k}(y,z)\widetilde{f}_{k}(z)\right)^{-1}.\]
Then:
\begin{enumerate}[(i)]
\item for every $k\in \N$ and almost every $x\in X$,\label{item:CS estimate}
\[\widetilde{f}_{k}(x)^{2}\leq \phi_{k}(x)\widetilde{f}_{2k}(x)\mbox{ and } \widetilde{f}_{k}(x)^{3}\leq \psi_{k}(x)\widetilde{f}_{3k}(x),\]
\item for every $k\in \N$: \label{item:mass transport 23}
\[\int \pi\phi_{k}\,d\mu=\int \pi\,d\mu=\int \pi\psi_{k}\,d\mu.\]
\end{enumerate}

\end{lem}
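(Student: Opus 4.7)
The plan is to separate the lemma into its two halves and tackle them with different tools: for (i), Cauchy--Schwarz together with the convolution bound (b), and for (ii), the mass transport principle together with $\pi$-symmetry (a). Neither half requires hypothesis (d).

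For the first inequality of (i), I apply Cauchy--Schwarz to the factorization $f_k(x,y) = \bigl(f_k(x,y)\widetilde{f}_k(y)^{-1}\bigr)^{1/2}\cdot\bigl(f_k(x,y)\widetilde{f}_k(y)\bigr)^{1/2}$. Squaring $\widetilde{f}_k(x)=\sum_y f_k(x,y)$ yields $\widetilde{f}_k(x)^2 \le \phi_k(x)\cdot \sum_y f_k(x,y)\widetilde{f}_k(y)$, and the second factor equals $\sum_{y,z} f_k(x,y)f_k(y,z)$, which is bounded by $\widetilde{f}_{2k}(x)$ via (b). For the cubic inequality, I set $A_y=\sum_z f_k(y,z)\widetilde{f}_k(z)$ so that $\psi_k(x)=\widetilde{f}_k(x)\sum_y f_k(x,y)A_y^{-1}$. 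Cauchy--Schwarz applied to the splitting $f_k(x,y) = \bigl(f_k(x,y)A_y^{-1}\bigr)^{1/2}\cdot \bigl(f_k(x,y)A_y\bigr)^{1/2}$ gives $\widetilde{f}_k(x)^2 \le \bigl(\psi_k(x)/\widetilde{f}_k(x)\bigr)\sum_y f_k(x,y)A_y$; multiplying through by $\widetilde{f}_k(x)$ and expanding the remaining factor as $\sum_{y,z,w}f_k(x,y)f_k(y,z)f_k(z,w)$, I apply (b) twice --- once to collapse $\sum_{z,w}f_k(y,z)f_k(z,w)$ into $\widetilde{f}_{2k}(y) = \sum_u f_{2k}(y,u)$, and then to bound $\sum_{y,u}f_k(x,y)f_{2k}(y,u)$ by $\widetilde{f}_{3k}(x)$ --- to obtain $\widetilde{f}_k(x)^3 \le \psi_k(x)\widetilde{f}_{3k}(x)$.

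For (ii), I execute the same two-step swap for both identities. For $\phi_k$, I apply the mass transport principle to $(x,y)\mapsto \pi(x)f_k(x,y)\widetilde{f}_k(y)^{-1}$ to rewrite $\int \pi \phi_k\,d\mu$ as $\int \widetilde{f}_k(x)^{-1}\sum_y \pi(y)f_k(y,x)\,d\mu(x)$. Hypothesis (a) then turns the inner sum into $\pi(x)\widetilde{f}_k(x)$, the two factors of $\widetilde{f}_k(x)$ cancel, and I am left with $\int \pi\,d\mu$. For $\psi_k$, mass transport applied to $(x,y)\mapsto \pi(x)\widetilde{f}_k(x)f_k(x,y)A_y^{-1}$ produces $\int A_x^{-1}\sum_y \pi(y)\widetilde{f}_k(y)f_k(y,x)\,d\mu(x)$; $\pi$-symmetry rewrites $\pi(y)f_k(y,x) = \pi(x)f_k(x,y)$, so the inner sum becomes $\pi(x)\sum_y f_k(x,y)\widetilde{f}_k(y) = \pi(x)A_x$, and the $A_x$ factors cancel to yield $\int \pi\,d\mu$ once more.

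The only real bookkeeping concern is ensuring that all quantities appearing in denominators, chiefly $\widetilde{f}_k$ and $A_y$, are finite and positive almost surely; this is exactly guaranteed by hypothesis (c) (and by $\pi>0$ a.e.). The main conceptual obstacle is really just spotting the right Cauchy--Schwarz splittings in (i) so that the ``bad'' side pairs up with $\phi_k$ or $\psi_k$ while the ``good'' side converts, via (b), into $\widetilde{f}_{2k}$ or $\widetilde{f}_{3k}$; once those splittings are in hand the computation is formal. This lemma then serves as the algebraic ``$2$-$3$'' input to be fed into a density-on-the-logarithmic-scale argument in the main proof of Theorem \ref{thm:generalized 23}.
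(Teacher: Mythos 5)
Your proposal is correct and follows essentially the same route as the paper: the same two Cauchy--Schwarz splittings against $\widetilde{f}_k(y)^{\pm 1}$ and $A_y^{\pm 1}$ followed by hypothesis (\ref{item:subadditive convolution inequality}) applied once (resp.\ twice) for part (\ref{item:CS estimate}), and mass transport plus $\pi$-symmetry with the same cancellations for part (\ref{item:mass transport 23}). Your observations that hypothesis (d) is not needed here and that hypothesis (c) is what makes the denominators legitimate also match the paper.
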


Note that in order to make sense of $\phi_{k},\psi_{k}$ we are using hypothesis (\ref{item:almost sure convergence}).

\begin{proof}

(\ref{item:CS estimate})
By Cauchy-Schwarz,
\begin{align*}
  \widetilde{f}_{k}(x)^{2}=\left(\sum_{y\in [x]_{\cR}}f_{k}(x,y)^{1/2}\widetilde{f}_{k}(y)^{-1/2}f_{k}(x,y)^{1/2}\widetilde{f}_{k}(y)^{1/2}\right)^{2}&\leq \phi_{k}(x)\sum_{y\in [x]_{\cR}}f_{k}(x,y)\widetilde{f}_{k}(y)\\
  &\leq \phi_{k}(x)\widetilde{f}_{2k}(x)  
\end{align*}
for almost every $x\in X$, where in the last step we use the hypothesis (\ref{item:subadditive convolution inequality}).
Using Cauchy-Schwarz again:
\[\widetilde{f}_{k}(x)^{2}\leq \widetilde{f}_{k}(x)^{-1}\psi_{k}(x)\sum_{y\in [x]_{\cR}}f_{k}(x,y)\left(\sum_{z\in [y]_{\cR}}f_{k}(y,z)\widetilde{f}_{k}(z)\right) \mbox{ for a.e. $x\in X$.}\]
We can estimate the second sum using our hypothesis on $f_{k}$:
\begin{align*}
\sum_{y\in [x]_{\cR}}f_{k}(x,y)\left(\sum_{z\in [y]_{\cR}}f_{k}(y,z)\widetilde{f}_{k}(z)\right)&=\sum_{y\in [x]_{\cR}}f_{k}(x,y)\left(\sum_{w\in [y]_{\cR}}(f_{k}*f_{k})(y,w)\right)\\
&\leq \sum_{w,y\in [x]_{\cR}}f_{k}(x,y)f_{2k}(y,w)\\
&=\sum_{w\in [x]_{\cR}}(f_{k}*f_{2k})(x,w)\\
&\leq \sum_{w\in [x]_{\cR}}f_{3k}(x,w)\\
&=\widetilde{f}_{3k}(x),
\end{align*}
with all of the above inequalities and equalities holding for almost every $x\in X$.
So we have shown that
\[\widetilde{f}_{k}(x)^{2}\leq \widetilde{f}_{k}(x)^{-1}\psi_{k}(x)\widetilde{f}_{3k}(x) \mbox{ for almost every $x\in X$,}\]
and rearranging proves the desired inequality.

(\ref{item:mass transport 23}):
By the mass transport principle and $\pi$-symmetry:
\[\int \pi(x)\phi_{k}(x)\,d\mu(x)=\int \sum_{y\in [x]_{\cR}}\pi(x)f_{k}(x,y)\widetilde{f}_{k}(x)^{-1}\,d\mu(x)=\int \pi\,d\mu,\]
as
\[\sum_{y\in [x]_{\cR}}f_{k}(x,y)\widetilde{f}_{k}(x)^{-1}=1.\]
Similarly,
\begin{align*}&\int \pi(x)\psi_{k}(x)\,d\mu(x)\\=&\int \sum_{y\in [x]_{\cR}}\pi(x)f_{k}(x,y)\widetilde{f}_{k}(y)\left(\sum_{z\in [x]_{\cR}}f_{k}(x,z)\widetilde{f}_{k}(z)\right)^{-1}\,d\mu(x)=\int \pi\,d\mu,\end{align*}
as
\[\sum_{y\in [x]_{\cR}}f_{k}(x,y)\widetilde{f}_{k}(y)\left(\sum_{z\in [x]_{\cR}}f_{k}(x,z)\widetilde{f}_{k}(z)\right)^{-1}=1.\]
\end{proof}

We now prove Theorem \ref{thm:generalized 23}. It will be helpful to pass to limits along subsets of $\N$ which are ``not too sparse" in a multiplicative sense. We say that $A\subseteq\N$ is \emph{asymptotically dense on the logarithmic scale} if
\[\lim_{\substack{x\to \infty\\ x\in \R}}\inf_{n\in A}|x-\log(n)|=0.\]
We leave it as an exercise to the reader to show that if $A\subseteq \N$ is asymptotically dense on the logarithmic scale, and if  $(a_{k})_{k=1}^{\infty}$ is a sequence of nonnegative real numbers for which there is a uniform $C>0$ with
\[a_{l+k}\geq C^{l}a_{k}\mbox{ for all $l,k\in \N$},\]
then
\[\limsup_{k\in A}a_{k}^{1/k}=\limsup_{k\to\infty}a_{k}^{1/k},\mbox{ and } \liminf_{k\in A}a_{k}^{1/k}=\liminf_{k\to\infty}a_{k}^{1/k}.\]

\begin{proof}[Proof of Theorem \ref{thm:generalized 23}]
Adopt notation as in Lemma \ref{lem:weak submultiplicativity}. Set
\[\overline{f}(x)=\limsup_{k\to\infty}\widetilde{f}_{k}(x)^{1/2k},\mbox{ and } \underline{f}(x)=\liminf_{k\to\infty}\widetilde{f}_{k}(x)^{1/2k}.\]
For $p,q\in \N\cup\{0\},k\in \N$ we have, by Lemma \ref{lem:weak submultiplicativity} and induction, that
\begin{equation}\label{eqn:inductive 23 estimate}
\widetilde{f}_{k}(x)^{2^{p}3^{q}}\leq C_{p,q,k}(x)\widetilde{f}_{2^{p}3^{q}k}(x)\mbox{ for almost every $x\in X$,}
\end{equation}
where
\[C_{p,q,k}(x)=\prod_{i=0}^{p-1}\phi_{2^{i}k}(x)^{2^{p-1-i}3^{q}}\prod_{j=0}^{q-1}\psi_{2^{p}3^{j}k}(x)^{3^{q-1-j}}.\]
By Lemma \ref{lem:weak submultiplicativity} and the fact that $\pi\in L^{1}(X,\mu)$, both $\sum_{k}k^{-2}\pi\phi_{k},$ $\sum_{k}k^{-2}\pi\psi_{k}$ converge almost everywhere. Since $\pi(x)>0$ almost surely, we see that for almost every $x\in X$, there is a $k_{0}$ (depending upon $x$) so that for $k\geq k_{0}$ we have $\phi_{k}(x)\leq k^{2},$ $\psi_{k}(x)\leq k^{2}$. So for almost every $x\in X$, and for all $k\geq k_{0}$,  and all $p,q\in \N\cup\{0\}$:
\[C_{p,q,k}(x)\leq \prod_{i=0}^{p-1}(2^{i}k)^{2^{p-i}3^{q}}\prod_{j=0}^{q-1}(2^{p}3^{j}k)^{3^{q-1-j}2},\]
which implies
\begin{align*}\frac{1}{2^{p}3^{q}}\log C_{p,q,k}(x)&\leq \sum_{i=0}^{p-1}2^{-i}\log(2^{i}k)+\sum_{j=0}^{q-1}2^{1-p}3^{-1-j}\log(2^{p}3^{j}k)\\
&\leq B+2\log(k),
\end{align*}
where
\[B=\left(\max_{p\in \N\cup\{0\}}\frac{p}{2^{p}}\right)\log(2)+\log(2)\sum_{i=0}^{\infty}\frac{i}{2^{i}}+\frac{2\log(3)}{3}\sum_{j=0}^{\infty}\frac{j}{3^{j}}<\infty.\]
Fix $k_{1}\geq k_{0}$. Since $A=\{2^{p}3^{q}k_{1}:p,q\in \N\}$ is asymptotically dense on the logarithmic scale, we have by (\ref{eqn:inductive 23 estimate}) and hypothesis (\ref{item:small log scale errors}):
\begin{align}\label{eqn:23 comparison bw limsup and liminf}
  \underline{f}(x)=\liminf_{p+q\to \infty}\widetilde{f}_{2^{p}3^{q}k_{1}}(x)^{\frac{1}{2^{p}3^{q}k_{1}}}\geq \liminf_{p+q\to\infty}C_{p,q,k_{1}}(x)^{-\frac{1}{2^{p}3^{q}k_{1}}}\widetilde{f}_{k_{1}}(x)^{1/k_{1}}\\
\geq e^{-B/k_{1}}k_{1}^{-2/k_{1}}\widetilde{f}_{k_{1}}(x)^{1/k_{1}}.\nonumber
\end{align}

Letting $k_{1}\to \infty$, we see that $\underline{f}\geq \overline{f}$ almost everywhere, and this proves that $\widetilde{f}$ exists almost everywhere. Note that (\ref{eqn:23 comparison bw limsup and liminf}) applied with $k_{1}=k_{0}$ shows that $\underline{f}(x)\geq e^{-B/k_{0}}k_{0}^{-2/k_{0}}\widetilde{f}_{k_{0}}(x)^{1/k_{0}}.$ Thus $\widetilde{f}(x)>0$ for almost every $x$.

(\ref{item: average monotonicity}):
By (\ref{eqn:inductive 23 estimate}), we have for $p,k\in \N$ and almost every $x\in X$:
\begin{align*}
    \frac{\widetilde{f}_{k}(x)}{\widetilde{f}_{2^{p}k}(x)^{\frac{1}{2^{p}}}}\leq C_{p,0,k}(x)^{\frac{1}{2^{p}}}&=\left(\prod_{i=0}^{p-1}\phi_{2^{i+1}k}(x)^{2^{p-1-i}}\right)^{1/2^{p}}\\
    &\leq \frac{1}{2^{p}}\left(1+\sum_{i=0}^{p-1}2^{p-1-i}\phi_{2^{i+1}k}(x)\right),
    \end{align*}
where in the last step we use the arithmetic-geometric mean inequality. Lemma \ref{lem:weak submultiplicativity} implies
\[\int \pi(x)\frac{\widetilde{f}_{k}(x)}{\widetilde{f}_{2^{p}k}(x)^{\frac{1}{2^{p}}}}\,d\mu(x)\leq \int \pi\,d\mu,\]
letting $p\to\infty$ and applying Fatou's Lemma proves (\ref{item: average monotonicity}).

(\ref{item:integral formula for sup}):
Part (\ref{item: average monotonicity}) shows that
\[\int \pi(x) \widetilde{f}_{k}(x)\,d\mu(x)\leq \|\widetilde{f}\|_{\infty}^{k}\int \pi\,d\mu,\]
for every $k\in \N$. Since $\pi\in L^{1}(X,\mu),$
\[\limsup_{k\to\infty}\left(\int \pi(x)\widetilde{f}_{k}(x)\,d\mu(x)\right)^{\frac{1}{k}}\leq \|\widetilde{f}\|_{\infty}.\]
We now prove the reverse inequality. Fatou's Lemma implies that for all $r\in \N$:
\begin{align*}\int \pi(x)\widetilde{f}(x)^{r}\,d\mu(x)&\leq \liminf_{k\to \infty}\int \pi(x)\widetilde{f}_{k}(x)^{\frac{r}{k}}\,d\mu(x)\\&\leq \liminf_{k\to\infty}\left(\int \pi(x)\widetilde{f}_{k}(x)\,d\mu(x)\right)^{\frac{r}{k}}\left(\int \pi(x)\,d\mu(x)\right)^{1-\frac{r}{k}},\end{align*}
where in the last step we use Holder's inequality for $k>r$. Since $\pi\in L^{1}(X,\mu)$,
\[\left(\int \pi(x)\widetilde{f}(x)^{r}\,d\mu(x)\right)^{\frac{1}{r}}\leq \liminf_{k\to\infty}\left(\int \pi(x) \widetilde{f}_{k}(x)\,d\mu(x)\right)^{\frac{1}{k}}.\]
Letting $r\to\infty$ and using that $0<\pi(x)$ for a.e. $x$ and that $\pi\in L^{1}(X,\mu)$ shows that
\[\|\widetilde{f}\|_{\infty}\leq \liminf_{k\to\infty}\left(\int \pi(x)\widetilde{f}_{k}(x)\,d\mu(x)\right)^{\frac{1}{k}}.\]
\end{proof}

\subsection{Normal subrelations, the proof of Theorem \ref{T:existence of the relative spectral radius} (\ref{item: ergodic normalizer})}\label{sec:invariance of almost sure sr}

In this subsection, we prove that the co-spectral radius $\rho^{\cS}(x)$ is almost surely constant when $\cS$ is either ergodic normal (the case when $\cS$ is ergodic is fairly direct, see Corollary \ref{cor: local 23} (\ref{item: almost sure existence local cs})). We prove a common generalization of the cases where $\cS$ is normal or ergodic, for which we need partial one-sided normalizers. 

\begin{defn}\label{defn:1-sided normalizer}
Let $(X,\mu)$ be a standard probability space and $\cS\leq \cR$ discrete, measure-preserving equivalence relations on $(X,\mu)$. We define the \emph{partial one-sided normalizers} to be the set of $\phi\in [[\cR]]$ so that for almost every $x\in \dom(\phi)$ we have $\phi([x]_{\cS}\cap \dom(\phi))\subseteq [\phi(x)]_{\cS}$. We use $PN_{\cR}^{(1)}(\cS)$ for the set of partial one-sided normalizers of $\cS$ inside of $\cR$.
\end{defn}
One example to keep in mind for intuition is the following: suppose that $\cR$ is the orbit equivalence relation of a measure-preserving action of $G$ on $(X,\mu)$. Suppose that $H\leq G$, and let $\cS=\{(x,ax):a\in N,x\in X\}$. For $g\in G$, we let $\phi_{g}\in [\cR]$ be given by $\phi_{g}(x)=gx$. If $g$ is in the normalizer of $H$ inside $G$, then by direct calculation $\phi_{g}([x]_{\cS})=[x]_{\cS}$ for almost every $x\in X$. So $\phi_{g}\in PN_{\cR}^{(1)}(\cS)$. More generally, $\phi_{g}\big|_{E}\in PN_{\cR}^{(1)}(\cS)$ for every $g\in G$ in the normalizer of $H$. We can thus think of the partial one-sided normalizers as a generalization of normalizers to the setting of the full pseudogroup. 

Another source of elements in the partial normalizer is the following. Recall that the set of \emph{endomorphisms of $\cR$ over $\cS$}, denoted $\End_{\cR}(\cS)$ are the measurable functions $\phi\colon X\to X$ so that for almost every $x\in X$ the following two conditions hold:
\begin{itemize}
    \item $\phi(x)\in [x]_{\cR}$,
    \item $\phi([x]_{\cS})\subseteq [\phi(x)]_{\cS}$.
\end{itemize}

Such functions need not be injective modulo null sets, and when they fail to be injective modulo null sets are not measure-preserving. Indeed, one can use the mass-transport principle to show that $\phi\in \End_{\cR}(\cS)$ we have that 
\[\frac{d\phi_{*}\mu}{d\mu}(x)=|\phi^{-1}(\{x\})| \textnormal{ for almost every $x\in X$,}\]
(we will not use this fact so will not give a full proof of it). However, as we will see shortly (see Lemma \ref{lem:partial normalizer decomposition}) for each $\phi\in \End_{\cR}(\cS)$ we may find a partition modulo null sets $(E_{i})_{i\in I}$  of $X$ into countably many sets so that $\phi|_{E_{i}}$ is injective for each $i\in I$. 

This last example in fact provides the motivation for our consideration of the one-sided partial normalizers, as it is one way to define normal subrelations.

\begin{defn}[See Theorem 2.2 of {FSZ}]
Let $\cR$ be a discrete, measure-preserving equivalence relation on a standard probability space $(X,\mu)$. A subrelation $\cS\leq \cR$ is \emph{normal} if  there is a countable set $D\subseteq \End_{\cR}(\cS)$ so that $[x]_{\cR}=\bigcup_{\phi\in D}\phi([x]_{\cS})$ for almost every $x\in X$.
\end{defn}
From the above definition, one can imagine attempting to prove that the co-spectral is almost surely constant for a normal subrelation by trying to understand how it varies after applying endomorphisms of $\cS\leq \cR$. However, as alluded to above, endomorphisms can have undesirable properties (namely lack of injectivity and failure to be measure-preserving) and the possibility of these undesirability properties make them ill-suited for our proofs. A good tradeoff for our purposes is to drop being everywhere defined and gain being measure-preserving and injective. This is precisely the purpose of the one-sided partial normalizers.

We have a natural way for elements of $PN^{(1)}_{\cR}(\cS)$ to act on $L^{p}(X,\mu)$ for $1\leq p\leq \infty$. Namely, if $\phi \in PN^{(1)}_{\cR}(\cS)$ we define $\alpha_{\phi}\in B(L^{p}(X,\mu))$ via
\[(\alpha_{\phi}f)(x)=1_{\ran(\phi)}(x)f(\phi^{-1}(x)).\]
This allows us to say what it means for $PN^{(1)}_{\cR}(\cS)$ to act ergodically. It simply means that if $f\in L^{p}(X,\mu)$ and $\alpha_{\phi}(f)=1_{\ran(\phi)}f$ for every $\phi \in PN^{(1)}_{\cR}(\cS)$, then $f$ is essentially constant. We leave it as an exercise to the reader to follow the usual arguments to verify that this is equivalent to saying  there exists a countable $C\subseteq PN^{(1)}_{\cR}(\cS)$ so that if a measurable subset of $X$ is invariant under $C$, then its measure is either $0$ or $1$. Note that $PN^{(1)}_{\cR}(\cS)$ automatically acts ergodically if $\cS$ is ergodic, because $[\cS]\subseteq PN^{(1)}_{\cR}(\cS)$. The following lemma shows that $PN^{(1)}_{\cR}(\cS)$ also acts ergodically if $\cS$ is normal and $\cR$ is ergodic. 
\begin{lem}\label{lem:partial normalizer decomposition}
Suppose that $\cS$ is normal in $\cR$. Then there is a countable set $C\subseteq PN^{(1)}_{\cR}(\cS)$ so that
\[[x]_{\cR}=\bigcup_{\phi\in C}\phi(x)\]
for almost every $x\in X$.

\end{lem}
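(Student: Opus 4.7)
The plan is to invoke the definition of normality of $\cS$ in $\cR$ to obtain a countable family of endomorphisms of $\cR$ over $\cS$ whose graphs cover $\cR$, and then chop each such endomorphism into countably many partial injections that land in $PN^{(1)}_{\cR}(\cS)$.

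More precisely, by the Feldman--Sutherland--Zimmer characterization of normality, there is a countable family $\Gamma \subseteq \End_{\cR}(\cS)$ with the property that $[x]_{\cR} = \bigcup_{\phi \in \Gamma}\{\phi(x)\}$ for almost every $x \in X$. I would fix such a $\Gamma$. Each $\phi \in \Gamma$ is a Borel (hence measurable) map $X \to X$ with $\phi(x) \in [x]_{\cR}$, so $\phi^{-1}(y) \subseteq [y]_{\cR}$ is countable for every $y$; in other words, $\phi$ is Borel countable-to-one. By the Luzin--Novikov theorem, $X$ can then be partitioned into countably many measurable sets $(A_{n}^{\phi})_{n \in \N}$ on each of which $\phi$ is injective. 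For each $n$, the restriction $\phi_{n} := \phi|_{A_{n}^{\phi}}$ is a bimeasurable bijection $A_{n}^{\phi} \to \phi(A_{n}^{\phi})$, and since $\phi_{n}(x) \in [x]_{\cR}$ almost surely, $\phi_{n}$ belongs to $[[\cR]]$.

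Next I would verify that $\phi_{n} \in PN^{(1)}_{\cR}(\cS)$. For almost every $x \in \dom(\phi_{n}) = A_{n}^{\phi}$,
\[
\phi_{n}\bigl([x]_{\cS} \cap \dom(\phi_{n})\bigr) \;=\; \phi\bigl([x]_{\cS} \cap A_{n}^{\phi}\bigr) \;\subseteq\; \phi([x]_{\cS}) \;\subseteq\; [\phi(x)]_{\cS} \;=\; [\phi_{n}(x)]_{\cS},
\]
where the penultimate inclusion uses $\phi \in \End_{\cR}(\cS)$. Setting $C := \{\phi_{n} : \phi \in \Gamma,\, n \in \N\}$ gives a countable subset of $PN^{(1)}_{\cR}(\cS)$, and the identity $[x]_{\cR} = \bigcup_{\phi \in \Gamma}\{\phi(x)\}$ for a.e.\ $x$ immediately rephrases as $[x]_{\cR} = \bigcup_{\psi \in C}\{\psi(x)\}$ for a.e.\ $x$, as desired.

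The only non-bookkeeping ingredients are the Luzin--Novikov decomposition and the characterization of normality in terms of endomorphisms; neither presents a genuine obstacle, so I expect this lemma to be entirely routine once the correct definitions are in hand. The mild subtlety is simply to be sure that one uses an endomorphism-based (as opposed to full-bijection-based) notion of normality, since the paper has just introduced $\End_{\cR}(\cS)$ for precisely this purpose.
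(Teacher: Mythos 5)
Your overall strategy matches the paper's, but there is a genuine (if easily repaired) gap at the very first step: the Feldman--Sutherland--Zimmer characterization of normality does \emph{not} give you a countable $\Gamma\subseteq \End_{\cR}(\cS)$ with $[x]_{\cR}=\bigcup_{\phi\in\Gamma}\{\phi(x)\}$. What it gives is a countable $D\subseteq \End_{\cR}(\cS)$ with the weaker covering
\[
[x]_{\cR}=\bigcup_{\phi\in D}[\phi(x)]_{\cS}
\]
for a.e.\ $x$, i.e.\ the $\cS$-classes of the images $\phi(x)$ cover the $\cR$-class, not the points $\phi(x)$ themselves. As written, your proof starts from a hypothesis that is strictly stronger than the cited input, and the remaining argument never recovers the points of $[\phi(x)]_{\cS}$ other than $\phi(x)$ itself.

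The fix is short and is exactly the extra step the paper takes: choose a countable subgroup $H\leq[\cS]$ with $Hx=[x]_{\cS}$ a.e., and replace $D$ by the countable family $\{\psi\circ\phi:\psi\in H,\ \phi\in D\}$. Each $\psi\circ\phi$ is again in $\End_{\cR}(\cS)$ (since $\psi\circ\phi([x]_{\cS})\subseteq\psi([\phi(x)]_{\cS})=[\psi(\phi(x))]_{\cS}$), and now $[x]_{\cR}=\bigcup_{\psi,\phi}\{(\psi\circ\phi)(x)\}$ a.e., which is the pointwise covering you assumed. From there your argument goes through: your use of Luzin--Novikov to cut each countable-to-one endomorphism into countably many partial injections is a perfectly valid (and marginally slicker) substitute for the paper's device of decomposing $\dom(\phi)$ over a countable generating subgroup $G\leq[\cR]$ into the sets where $\phi$ agrees with a fixed $g\in G$; and your verification that each restricted piece lies in $PN^{(1)}_{\cR}(\cS)$ is correct. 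So the only missing idea is the composition with generators of $\cS$; once that is inserted, the proof is complete and essentially the same as the paper's.
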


\begin{proof}
By \cite[Theorem 2.2]{FSZ}, we may find a countable $D\subseteq \End_{\cR}(\cS)$ so that
\[[x]_{\cR}=\bigcup_{\phi\in D}[\phi(x)]_{\cS}\]
for almost every $x\in X$.
Fix a countable subgroup $H\subseteq [\cS]$
 so that $[x]_{\cS}=Hx$ for almost every $x\in X$. Then for almost every $x\in X$ we have that
\[[x]_{\cR}=\bigcup_{\phi\in D,\psi\in H}(\psi\circ \phi)(x).\]
Fix a countable subgroup $G\subseteq [\cR]$ so that $[x]_{\cR}=Gx$ for almost every $x\in X$. Then for each $\phi \in D$ we may write, up to sets of measure zero,
\[\dom(\phi)=\bigcup_{g\in G}E_{g,\phi}\]
where $E_{g,\phi}\subseteq \{x\in \dom(\phi):\phi(x)\in gx\}$.  For each $g\in G$, $\phi\in D$, define $\phi_{g}\in [[\cR]]$ by declaring that $\dom(\phi_{g})=E_{g,\phi}$ and that $\phi_{g}(x)=gx$ for $x\in E_{g,\phi}$.
 Observe that $\phi_{g}\in PN^{(1)}_{\cR}(\cS)$. Then
\[[x]_{\cR}=\bigcup_{\substack{\psi\in H,\\ \phi\in D,g\in G \textnormal{ with } x\in E_{g,\phi}}}\psi\circ \phi_{g}(x).\]
Thus
\[C=\{\psi\circ \phi_{g}:g\in G,\phi\in D,\psi\in H\}\]
is the desired countable set.

\end{proof}

By the preceding lemma and the inclusion $[\cS]\subseteq PN^{(1)}_{\cR}(\cS)$ the condition that $PN^{(1)}_{\cR}(\cS)$ act ergodically on $(X,\mu)$ is satisfied if either $\cS$ is ergodic or $\cS$ is normal and $\cR$ is ergodic.
We will show that $\rho^{\cS}$ is almost surely invariant under the partial one-sided normalizers, and is thus essentially constant when $PN^{(1)}_{\cR}(\cS)$ acts ergodically.

Since the elements of $PN^{(1)}_{\cR}(\cS)$ are only partially defined, we will need to relate the co-spectral to the modification given by Example \ref{example:local cs} by restricting to a subset. It will not be hard to show from Theorem \ref{thm:generalized 23} that $\rho_{E}^{\cS}(x)$ as defined in Example \ref{example:local cs} exists for almost every $x\in X$. To deduce the existence of $\rho^{\cS}_{E}(x)$ for almost every $x\in X$, as well as some of its basic properties, we need to recall the \emph{$\cS$-saturation} of a measurable set.
Given a discrete, measure-preserving equivalence relation $\cS$ on a standard probability space and a measurable $E\subseteq X$, the \emph{$\cS$-saturation of $E$} is the (unique up to measure zero sets) measurable subset $\widetilde{E}\subseteq X$ satisfying the following  properties:
\begin{itemize}
    \item $E\subseteq \widetilde{E}$,
    \item for almost every $x\in \widetilde{E}$ we have $[x]_{\cS}\subseteq \widetilde{E}$,
    \item for almost every $x\in \widetilde{E}$, there is a $y\in E$ with $x\in [y]_{\cS}$.
\end{itemize}
If $\Gamma\leq [\cS]$ is a countable subgroup which generates $\cS$, then a model for $\widetilde{E}$ can be given by
\[\widetilde{E}=\bigcup_{\phi\in \Gamma}\phi(E).\]

It will also be helpful to relate $\rho^{\cS}_{E}$ to a quantity more operator-theoretic in nature.
For a measurable set $E\subseteq X$ of positive measure, and $x\in X$ we let
\[p_{2k,x,\cS,E}=\sum_{y\in [x]_{\cS}\cap E}p_{2k,x,y}.\]
We also let
\[\rho_{E}(\cR/\cS,\nu)=\lim_{k\to\infty}\left(\frac{1}{\mu(E)}\int_{E}p_{2k,x,\cS,E}\,d\mu(x)\right)^{1/2k}.\]
Define $\zeta_{E}\in L^{2}(\cR/\cS)$ by $(\zeta_{E})_{x}=\frac{1_{E}(x)}{\sqrt{\mu(E)}}\delta_{[x]_{\cS}}$. Then
\[\frac{1}{\mu(E)}\int_{E}p_{2k,x,\cS,E}\,d\mu(x)=\ip{\lambda_{\cS}(\nu)^{2k}\zeta_{E},\zeta_{E}},\]
so it follows from the spectral theorem that the limit defining $\rho_{E}(\cR/\cS,\nu)$ exists.
We now proceed to show that $\lim_{k\to\infty}p_{2k,x,\cS,E}^{\frac{1}{2k}}$ exists.
\begin{cor}\label{cor: local 23}
Let $(X,\mu)$ be a standard probability space, and let $\cS\leq \cR$ be discrete, probability measure-preserving equivalence relations defined over $X$. 
Let $\nu\in \Prob([\cR])$ be symmetric, countably supported, and generate $\cR$. 
Fix a measurable $E\subseteq X$ with positive measure.
\begin{enumerate}[(a)]
\item For almost every $x\in X$ we have that
\[\rho^{\cS}_{E}(x)=\lim_{k\to\infty}p_{2k,x,\cS,E}^{1/2k}\]
exists and is almost surely $\cS$-invariant.  \label{item: almost sure existence local cs}
\item  $\|\rho^{\cS}_{E}\|_{\infty}=\rho_{E}(\cR/\cS,\nu)$.\label{item:op norm as sup norm}
\end{enumerate}
\end{cor}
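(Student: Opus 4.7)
The plan for (\ref{item: almost sure existence local cs}) is to bootstrap from Example~\ref{example:local cs} by exploiting the structure of $\cS_0$. Example~\ref{example:local cs} is a direct invocation of Theorem~\ref{thm:generalized 23} with $f_{k}(x,y) = p_{2k,x,y}1_{\cS}(x,y)1_{E}(x)1_{E}(y)$ and $\pi = 1$ on the probability space $(E,\mu|_E/\mu(E))$, and it produces a conull set $A\subseteq E$ on which $\rho^{\cS}_{E}(y) := \lim_{k\to\infty} p_{2k,y,\cS,E}^{1/2k}$ exists. The first step is to verify that $\cS_{0}$ is a pmp subequivalence relation of $\cS$: reflexivity and symmetry follow from $p_{0,x,x}=1$ and symmetry of $\nu$, transitivity from the concatenation estimate $p_{2(k_{1}+k_{2}),x,z}\geq p_{2k_{1},x,y}p_{2k_{2},y,z}$, and pmp-ness is inherited from $\cS$. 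Moreover, since $(x,y)\in\cS\setminus\cS_{0}$ forces $p_{2k,x,y}=0$ for every $k$, one has
\[p_{2k,x,\cS,E} \;=\; \sum_{y\in [x]_{\cS_{0}}\cap E} p_{2k,x,y},\]
which vanishes identically whenever $[x]_{\cS_{0}}\cap E=\emptyset$; this already handles the case when there is no $\cS_{0}$-connection to $E$.

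The analytical core is a two-sided comparison: for $(x,y)\in\cS_{0}$ with $c := p_{2k_{0},x,y}>0$, inserting a detour through $y$ and using symmetry of the walk yields, for $k\geq k_{0}$,
\[c\cdot p_{2(k-k_{0}),y,\cS,E} \;\leq\; p_{2k,x,\cS,E} \;\leq\; c^{-1}\, p_{2(k+k_{0}),y,\cS,E}.\]
Taking $(1/2k)$-th roots and letting $k\to\infty$ shows that existence of $\rho_{E}^{\cS}(y)$ forces existence of $\rho_{E}^{\cS}(x)$, together with equality of the two values; this simultaneously gives the $\cS_{0}$-invariance. Because $\cS_{0}$ is pmp, the $\cS_{0}$-saturation of the null set $E\setminus A$ is null, so for almost every $x$ with $[x]_{\cS_{0}}\cap E\neq\emptyset$ there is some $y\in [x]_{\cS_{0}}\cap A$ to which the comparison applies, producing $\rho_{E}^{\cS}(x)$. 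Combined with the trivial case $[x]_{\cS_{0}}\cap E=\emptyset$ (where the limit is $0$), this establishes (\ref{item: almost sure existence local cs}).

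For (\ref{item:op norm as sup norm}), applying Theorem~\ref{thm:generalized 23}(\ref{item:integral formula for sup}) to the same data on $(E,\mu|_E/\mu(E))$ gives
\[\lim_{k\to\infty}\left(\frac{1}{\mu(E)}\int_{E}p_{2k,x,\cS,E}\,d\mu(x)\right)^{1/k} \;=\; \|1_{E}\,\rho_{E}^{\cS}\|_{\infty}^{\,2},\]
so taking square roots identifies $\rho_{E}(\cR/\cS,\nu) = \|1_{E}\,\rho_{E}^{\cS}\|_{\infty}$. The upgrade $\|1_{E}\,\rho_{E}^{\cS}\|_{\infty} = \|\rho_{E}^{\cS}\|_{\infty}$ is then immediate from part (\ref{item: almost sure existence local cs}): for a.e.\ $x\notin E$ either $\rho_{E}^{\cS}(x)=0$, or $\rho_{E}^{\cS}(x)=\rho_{E}^{\cS}(y)$ for some $y\in[x]_{\cS_{0}}\cap E$ by $\cS_{0}$-invariance, so in either case $\rho_{E}^{\cS}(x)\leq \|1_{E}\,\rho_{E}^{\cS}\|_{\infty}$. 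The main subtlety I anticipate is the careful null-set bookkeeping when extending a.s.\ existence from $E$ to $X$—specifically, making sure that $\cS_{0}$ really is a pmp subequivalence relation so that $\cS_{0}$-saturations of null sets remain null; once this is in place the comparison argument and the deduction of (\ref{item:op norm as sup norm}) from Theorem~\ref{thm:generalized 23} are essentially mechanical.
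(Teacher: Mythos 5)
Your proposal is correct and follows essentially the same route as the paper: apply Theorem \ref{thm:generalized 23} to $f_{k}(x,y)=p_{2k,x,y}1_{\cS}(x,y)1_{E}(x)1_{E}(y)$ on $(E,\mu|_{E}/\mu(E))$ to get existence on $E$ and the identification of $\rho_{E}(\cR/\cS,\nu)$ with the essential supremum, then extend to all of $X$ by the concatenation inequality through a point of $[x]_{\cS_{0}}\cap E$, with the complement of the $\cS_{0}$-saturation handled trivially. Your write-up is in fact slightly more explicit than the paper's on two points it leaves implicit (the two-sided comparison giving $\cS_{0}$-invariance, and the verification that $\cS_{0}$ is a pmp subequivalence relation so that saturations of null sets are null).
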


\begin{proof}
Note that the sequence of functions $f_{k}\colon E\to [0,\infty)$ given by
\[f_{k}(x,y)=p_{2k,x,y}1_{\cS}(x,y)1_{E}(x)1_{E}(y)\]
satisfies the hypothesis of Theorem \ref{thm:generalized 23} with $\cR$ replaced by $\cR|_{E}$.
This proves that $\rho^{\cS}_{E}(x)$ exists for almost every $x\in E$.
Since
\[\frac{1}{\mu(E)}\int p_{2k,x,\cS,E}\,d\mu(x)=\ip{\lambda_{\cS}(\nu)^{2k}\zeta_{E},\zeta_{E}},\]
Theorem \ref{thm:generalized 23} also proves (\ref{item:op norm as sup norm}).

 To prove that $\rho^{\cS}_{E}(x)$ exists for almost every $x$, let $\widetilde{E}$ be the $\cS$-saturation of $E$. Note that $\rho^{\cS}_{E}(x)=0$ for almost every $x\in \widetilde{E}^{c}$.
For almost every $x\in \widetilde{E}$, we have that $\rho^{\cS}_{E}(y)$ exists for all $y\in [x]_{\cS}\cap E$. Fix such an $x\in\widetilde{E}$, and let $y\in [x]_{\cS}\cap E.$ 
Then there is an $\ell\in \N$ with $p_{\ell,x,y}>0$. So for all $k\geq 2\ell$,
\[p_{2k,x,\cS,E}=\sum_{z\in [x]_{\cS}\cap E}p_{2k,x,z}\geq p_{\ell,x,y}^{2}\sum_{z\in [x]_{\cS}\cap E}p_{2(k-\ell),y,z}=p_{\ell,x,y}^{2}p_{2(k-\ell),y,\cS,E}. \]
This shows that for almost every $x\in \widetilde{E}$ we have
\[\liminf_{k\to \infty}p_{2k,x,\cS,E}^{\frac{1}{2k}}\geq \rho^{\cS}_{E}(y).\]
The proof that
\[\limsup_{k\to\infty}p_{2k,x,\cS,E}^{\frac{1}{2k}}\leq \rho^{\cS}_{E}(y)\]
is similar.
\end{proof}

It will be helpful to study how $\rho_{E}(\cR/\cS,\nu)$ varies as a function of $E$. We can embed measurable subsets (modulo null sets) of $(X,\mu)$ into $L^{1}(X,\mu)$ via identifying each set with its indicator function. We thus have a natural distance on measurable sets modulo null sets by
\[d(E,F)=\mu(E\Delta F).\]
We show that $\rho_{E}(\cR/\cS,\nu)$ is semicontinuous as function of $E$. For later use, it will also be useful to know that $\rho_{E}(\cR/\cS,\nu)$ as a function of $\cS$.
We now define a topology on subrelations of $\cR$ making this precise.

For $\cR$ a discrete, probability measure-preserving equivalence relation on $(X,\mu)$, $\phi\in [[\cR]]$, and $\cS\leq \cR$, set
\[F_{\cS,\phi}=\{x\in \dom(\phi):\phi(x)\in [x]_{\cS}\}.\]
For a given subrelation $\cS\leq \cR$, a finite set $\Omega\subseteq [[\cR]]$, and $\varepsilon>0$, let $\cO_{\Omega,\varepsilon}(\cS)$  consist of all subrelations $\cS'\leq \cR$ such that 
\[\mu(F_{\cS,\phi}\Delta F_{\cS',\phi})<\varepsilon \textnormal{ for all $\phi \in \Omega$}.\]
We define a topology on subrelations of $\cR$ (modulo null sets) by declaring that the family $\cO_{\Omega,\varepsilon}(\cS)$ form a neighborhood basis of $\cS$. Suppose that $I$ is countable, and that $\Phi=(\phi_{i})_{i\in I}$ in $[[\cR]]$ is such that $[x]_{\cR}=\{\phi_{i}(x):i\in I,x\in \dom(\phi_{i})\}$ for almost every $x\in X$. In this case, given $(\alpha_{i})_{i\in I}\in (0,1]^{I}$ with $\sum_{i}\alpha_{i}=1$, we can define a metric on subrelations of $\cR$ by 
\[d(\cS,\cS')=\sum_{i}\alpha_{i}\mu(F_{\cS,\phi_{i}}\Delta F_{\cS',\phi_{i}}).\]
It can be check that this metric induces the topology defined above (this metric is moreover complete, though we will not need this). 
We now prove our promised semicontinuity.

\begin{lem}\label{item: monotone continuity}
Let $(X,\mu)$ be a standard probability space and $\cR$ a discrete, measure-preserving equivalence relation define over $X$. Let $\nu\in \Prob([\cR])$ be symmetric, countably supported, and generate $\cR$. The map $(E,\cS)\mapsto \rho_{E}(\cR/\cS,\nu)$ is lower semicontinuous (where the domain is all pairs of positive measure, measurable subsets $E$ of $X$ and subrelations $\cS$ of $\cR$).
\end{lem}

\begin{proof}
From 
\[\frac{1}{\mu(E)}\int_{E} p_{2k,x,\cS,E}\,d\mu(x)=\ip{\lambda_{\cS}(\nu)^{2k}\zeta_{E},\zeta_{E}},\]
and the spectral theorem we have 
\[\rho_{E}(\cR/\cS,\nu)=\sup_{k}\left(\frac{1}{\mu(E)}\int_{E} p_{2k,x,\cS,E}\,d\mu(x)\right)^{1/2k}.\]
It thus suffices to prove that 
\[(E,
\cS)\mapsto \int_{E} p_{2k,x,\cS,E}\,d\mu(x)
\]
is continuous.
Let $\id_{E}\in [[\cR]]$ be defined by $\dom(\id_{E})=E$ and $\id_{E}(x)=x$ for every $x\in E$. Then $ \int_{E} p_{2k,x,\cS,E}\,d\mu(x)$ can be rewritten as 
\[\sum_{\phi\in \supp(\nu^{*2k})}\nu^{*2k}(\phi)\mu(E\cap F_{\cS,\id_{E}\phi}).\]
Note that each term in this sum is a continuous as a function of $(E,\cS)$. Since 
$\sum_{\phi\in \supp(\nu^{*2k})}\nu^{*2k}(\phi)=1,$
it follows that the sum converges uniformly. 
Thus
\[(E,\cS)\mapsto \int_{E} p_{2k,x,\cS,E}\,d\mu(x)
\]
is continuous. 

\end{proof}

For technical reasons, in order to show that $\rho^{\cS}$ is invariant under the partial normalizer of $\cS$ inside of $\cR$, it will be helpful to reduce to the case that $\nu$ is lazy.  We will briefly need to adopt notation for how the co-spectral radius depends upon the measure. So for $\cS\leq\cR,X$ as in Theorem \ref{thm: satuartion invariance} and a countably supported $\nu\in \Prob([\cR])$, we use $\rho^{\cS}_{\nu}(x)$ for the co-spectral radius at $x$ defined using $\nu$. Similarly, for $n\in \N$ we use $p_{n,x,\cS}^{\nu}$ for the probability that the random walk starting at $x$ associated to $\nu$ is in $[x]_{\cS}$ after $n$ steps.

It will be helpful to use the following lemma which shows that the local co-spectral radius associated to $\nu$ is uniformly close to the local co-spectral radius associated to $(1-t)\nu+t\delta_{\id}$ as $t\to 0$.

\begin{lem}\label{lem:laziness saves the day}
Let $(X,\mu)$ be a standard probability space, and $\cS\leq \cR$  discrete, probability measure-preserving equivalence relations defined over $X$. For a symmetric and countably supported $\nu\in \Prob([\cR])$ and $t\in [0,1]$, define $\nu_{t}=(1-t)\nu+t\delta_{\id}$. Then
\[\|\rho_{\nu_{t}}^{\cS}-\rho_{\nu}^{\cS}\|_{\infty}\leq t(1+\rho(\cR/\cS,\nu)),\mbox{ for all $t\in [0,1]$.}\]
\end{lem}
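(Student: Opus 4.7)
The plan is to split the bound into two one-sided inequalities. For the easier direction, $\rho^{\cS}_{\nu}(y) - \rho^{\cS}_{\nu_t}(y) \leq t\rho(\cR/\cS,\nu)$, I would use the binomial expansion of $\lambda_{\cS}(\nu_t)^{2n}$. After pairing with the canonical vector $\xi \in L^2(\cR/\cS)$, $\xi_y = \delta_{[y]_{\cS}}$, and retaining only the $j = 2n$ term, this yields the pointwise lower bound $p^{\nu_t}_{2n,y,\cS} \geq (1-t)^{2n}p^{\nu}_{2n,y,\cS}$. Taking $(2n)$th roots and letting $n\to\infty$ gives $\rho^{\cS}_{\nu_t}(y) \geq (1-t)\rho^{\cS}_{\nu}(y)$ a.e., so $\rho^{\cS}_{\nu}(y) - \rho^{\cS}_{\nu_t}(y) \leq t\rho^{\cS}_{\nu}(y) \leq t\rho(\cR/\cS,\nu) \leq t(1+\rho(\cR/\cS,\nu))$.

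For the reverse direction, $\rho^{\cS}_{\nu_t}(y) - \rho^{\cS}_{\nu}(y) \leq t(1+\rho(\cR/\cS,\nu))$, I would route the argument through the operator-theoretic identities provided by Corollary \ref{cor: local 23} and Theorem \ref{thm: satuartion invariance}. For each $\alpha > 0$, set $E_\alpha = \{y : \rho^{\cS}_{\nu}(y) < \alpha\}$. Since $\rho^{\cS}_{\nu}$ is $\cS_{0,\nu}$-invariant by Corollary \ref{cor: local 23}(a), $E_\alpha$ coincides with its $\cS_{0,\nu}$-saturation, and Theorem \ref{thm: satuartion invariance} together with Corollary \ref{cor: local 23}(b) give
\[
\rho_{E_\alpha}(\cR/\cS,\nu) = \|\rho^{\cS}_{\nu} 1_{E_\alpha}\|_\infty \leq \alpha.
\]
Writing $\lambda_{\cS}(\nu_t) - \lambda_{\cS}(\nu) = t(I - \lambda_{\cS}(\nu))$ and applying the triangle inequality for operator norms on the compression $1_{E_\alpha} L^2(\cR/\cS)$ (on which $1_{E_\alpha}$ restricts to the identity), I obtain
\[
\rho_{E_\alpha}(\cR/\cS,\nu_t) \leq \rho_{E_\alpha}(\cR/\cS,\nu) + t\bigl(1 + \rho_{E_\alpha}(\cR/\cS,\nu)\bigr) \leq \alpha + t\bigl(1 + \rho(\cR/\cS,\nu)\bigr).
\]
Applying Theorem \ref{thm: satuartion invariance} and Corollary \ref{cor: local 23}(b) with $\nu_t$ in place of $\nu$, this yields, for a.e. $y \in E_\alpha$, $\rho^{\cS}_{\nu_t}(y) = \rho^{\cS}_{\nu_t, E_\alpha}(y) \leq \alpha + t(1+\rho(\cR/\cS,\nu))$.

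To upgrade this level-set bound to the pointwise statement, I would apply the previous estimate simultaneously for all rational $\alpha > 0$ and discard the resulting countable union of null sets. Then for a.e. $y$ and every rational $\alpha > \rho^{\cS}_{\nu}(y)$ one has $\rho^{\cS}_{\nu_t}(y) \leq \alpha + t(1+\rho(\cR/\cS,\nu))$; letting $\alpha \searrow \rho^{\cS}_{\nu}(y)$ gives $\rho^{\cS}_{\nu_t}(y) - \rho^{\cS}_{\nu}(y) \leq t(1+\rho(\cR/\cS,\nu))$ a.e., which together with the easy direction yields the desired sup-norm estimate. The main obstacle is that a direct pointwise comparison from the binomial identity alone appears hard, because there is no clean uniform bound on $p^{\nu}_{j,y,\cS}$ in terms of $\rho^{\cS}_{\nu}(y)^j$ (the co-spectral radius is a priori not constant and there is no obvious pointwise log-convexity for odd $j$); the detour through operator norms on $\cS_{0,\nu}$-invariant sublevel sets is what makes the argument work.
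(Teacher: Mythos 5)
Your overall strategy is the paper's own: both arguments hinge on comparing the set-localized quantities $\rho_{E}(\cR/\cS,\nu_{t})$ and $\rho_{E}(\cR/\cS,\nu)$ via the affine relation $\lambda_{\cS}(\nu_{t})=(1-t)\lambda_{\cS}(\nu)+tI$, and then transferring the comparison to the pointwise functions through Corollary \ref{cor: local 23}~(\ref{item:op norm as sup norm}) and Theorem \ref{thm: satuartion invariance}. The paper works with the bad set $\{x:\rho^{\cS}_{\nu}(x)+ct(1+\rho)\leq\rho^{\cS}_{\nu_{t}}(x)\}$ and a contradiction, while you work with sublevel sets $E_{\alpha}$ of $\rho^{\cS}_{\nu}$ and a countable exhaustion over rational $\alpha$; that bookkeeping is equally valid (arguably cleaner), and your binomial-expansion treatment of the easy direction is fine. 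The one step you must repair is the justification of
\[\rho_{E_{\alpha}}(\cR/\cS,\nu_{t})\leq\rho_{E_{\alpha}}(\cR/\cS,\nu)+t\bigl(1+\rho_{E_{\alpha}}(\cR/\cS,\nu)\bigr).\]
The quantity $\rho_{E}(\cR/\cS,\nu)$ is \emph{not} the operator norm of the compression $1_{E}\lambda_{\cS}(\nu)1_{E}$: it is defined through $\ip{\lambda_{\cS}(\nu)^{2k}\zeta_{E},\zeta_{E}}$, where powers of the full operator appear, so intermediate steps of the walk may leave $E$ (the compressed corner instead governs the ``restricted'' co-spectral radius of the lazy example in Section \ref{sec: 23 method subsection}, a genuinely different quantity). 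Nor is there a general inequality $\rho_{E}(\cR/\cS,\nu_{t})\leq\|1_{E}\lambda_{\cS}(\nu_{t})1_{E}\|$: for a positive operator $A$ and a projection $P$ one has $(PAP)^{2k}\leq PA^{2k}P$, so compression can only undershoot the quantity you need to bound from above, and a triangle inequality on the corner does not close the step. The correct (and short) fix is the paper's: $\rho_{E}(\cR/\cS,\nu)$ and $\rho_{E}(\cR/\cS,\nu_{t})$ are the $L^{\infty}(\eta_{E})$-norms of $s\mapsto s$ and $s\mapsto(1-t)s+t$ respectively, where $\eta_{E}$ is the spectral measure of $\lambda_{\cS}(\nu)$ at $\zeta_{E}$, supported in $[-\rho,\rho]$, and $|(1-t)s+t-s|=t|1-s|\leq t(1+\rho)$ there. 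With that substitution the rest of your argument goes through.
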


\begin{proof}
To simplify notation, set $\rho=\rho(\cR/\cS,\nu)$.
Let $\zeta_{E}$ be defined as in the discussion preceding Corollary \ref{cor: local 23}. Let $\eta\in \Prob([-\rho,\rho])$ be the spectral measure of $\lambda(\nu)$ with respect to $\zeta_{E}$.
Then for every measurable set $E\subseteq X$ of positive measure, we have that $\rho_{E}(\cR/\cS,\nu_{t}),\rho_{E}(\cR/\cS,\nu)$ are the $L^{\infty}$-norms of $s\mapsto s$, $s\mapsto (1-t)s+t$ with respect to $\eta$. In particular, for all measurable $E\subseteq X$ of positive measure
\[|\rho_{E}(\cR/\cS,\nu_{t})-\rho_{E}(\cR/\cS,\nu)|\leq t(1+\rho).\]
If $E$ is almost surely $\cS$-invariant, then $\rho^{\cS}_{E}=\rho^{\cS}1_{E}$. So Corollary \ref{cor: local 23} (\ref{item:op norm as sup norm}) implies that
\[|\|\rho^{\cS}_{\nu_{t}}1_{E}\|_{\infty}-\|\rho^{\cS}_{\nu}1_{E}\|_{\infty}|\leq t(1+\rho)\mbox{ for all measurable $E\subseteq X$ which are $\cS$-invariant}.\]

To prove the lemma, fix $c>0$ and let
\[E=\{x:\rho_{\nu}^{\cS}(x)+ct(1+\rho)\leq \rho^{\cS}_{\nu_{t}}(x)\}.\]
Note that $E$ is $\cS$-invariant, by $\cS$-invariance of $\rho^{\cS}_{\nu}$ and $\rho^{\cS}_{\nu_{t}}$.
Assume, for the sake of contradiction, that $\mu(E)>0$. Then by the preceding paragraph we have
\[\|\rho^{\cS}_{\nu_{t}}1_{E}\|_{\infty}\leq t(1+\rho)+\|\rho^{\cS}_{\nu}1_{E}\|_{\infty}.\]
On the other hand, the definition of $E$ forces
\[\|\rho^{\cS}_{\nu_{t}}1_{E}\|_{\infty}\geq ct(1+\rho)+\|\rho^{\cS}_{\nu}1_{E}\|_{\infty}.\]
Since $c>1$, we obtain a contradiction. Thus
\[\rho^{\cS}_{\nu_{t}}-\rho^{\cS}_{\nu}\leq ct(1+\rho)\]
almost everywhere. A similar proof shows that
$\rho^{\cS}_{\nu}-\rho^{\cS}_{\nu_{t}}\leq ct(1+\rho)$
almost everywhere.
Thus
\[\|\rho^{\cS}_{\nu_{t}}-\rho^{\cS}_{\nu}\|_{\infty}\leq ct(1+\rho)\]
for all $c>1$, and the proof is completed by letting $c\to 1$.

\end{proof}

The above reduction to the lazy case and the semicontinuity in Lemma \ref{item: monotone continuity} allow us to give a general formula for the local co-spectral radius in terms of the co-spectral radius.
We will ultimately use to prove invariance under partial normalizers of $\cS\leq \cR$ by restrict to sets where we have uniform lower bound on transition probabilities $p_{k,x,\phi(x)}$ for $\phi\in PN^{(1)}_{\cR}(\cS)$.

\begin{thm}\label{thm: satuartion invariance}
Let $(X,\mu)$ be a standard probability space, and let $\cS\leq \cR$ be discrete, probability measure-preserving equivalence relations defined over $X$, and fix a symmetric and countably supported $\nu\in \Prob([\cR])$.  
If $E\subseteq X$ has positive measure, then
\[\rho^{\cS}_{E}=\rho^{\cS}1_{\widetilde{E}},\]
where $\widetilde{E}$ is the $\cS$-saturation of $E$.

\end{thm}

\begin{proof}
By Lemma \ref{lem:laziness saves the day}, we may assume that $\nu$ is lazy. Observe that by $\cS$-invariance of $\widetilde{E}$, we have that $\rho^{\cS}1_{\widetilde{E}}=\rho^{\cS}_{\widetilde{E}}$ almost everywhere. Hence,
it is enough to show that $\rho^{\cS}_{E}=\rho^{\cS}_{\widetilde{E}}$ almost everywhere.
Clearly
$\rho^{\cS}_{E}\leq \rho^{\cS}_{\widetilde{E}}$
so it suffices to show that reverse inequality holds almost everywhere.
For $k\in \N$, let
\[E_{k}=\{x\in X:p_{2k,x,\cS,E}>0\}.\]
 Then $E_{k}$ is an increasing sequence of sets with $E_{k}\supseteq E$. By symmetry and laziness of $\nu$  we know that $\bigcup_{k}E_{k}=\widetilde{E}$ up to sets of measure zero.
 By Lemma \ref{item: monotone continuity}, it suffices to show that for almost every $k\in\N$ we have that  $\rho^{\cS}_{E_{k}}\leq \rho^{\cS}_{E}$ almost surely.

For $m\in \N$, set $E_{k,m}=\{x\in X:p_{2k,x,\cS,E}>1/m\}$. Then $E_{k,m}$ are increasing and
\[E_{k}=\bigcup_{m=1}^{\infty}E_{k,m},\]
so again by  Lemma \ref{item: monotone continuity}, it suffices to show that $\rho^{\cS}_{E_{k,m}}\leq \rho^{\cS}_{E}$ almost surely. Fix an $x\in E_{k,m}$ so that both $\rho^{\cS}_{E}(x),\rho^{\cS}_{E_{k,m}}(x)$ exist. Then for every $l\in \N$:
\[p_{2(l+k),x,\cS,E}\geq \sum_{y\in [x]_{\cS}\cap E_{k,m}}p_{2l,x,y}p_{2k,y,\cS,E}\geq \frac{1}{m}p_{2l,x,\cS,E_{k,m}}.\] 
Taking $2l^{th}$ roots of both sides and letting $l\to\infty$ shows that
\[\rho^{\cS}_{E}\geq \rho^{\cS}_{E_{k,m}}.\]
Since $\rho^{\cS}_{E},\rho^{\cS}_{E_{k,m}}$ exist almost everywhere, this completes the proof.

\end{proof}


We now have amassed enough results to show that the co-spectral radius does not increase after applying partial normalizing elements. 

\begin{prop}\label{prop:subadditive invariance}
Let $(X,\mu)$ be a standard probability space, and let $\cS\leq \cR$ be discrete, probability measure-preserving equivalence relations defined over $X$, and fix a symmetric countably supported $\nu\in \Prob([\cR])$ which generates $\cR$. Then for every $\phi \in PN^{(1)}_{\cR}(\cS)$ we have
\[\rho^{\cS}(\phi(x))\geq \rho^{\cS}(x)\]
for almost every $x\in \dom(\phi)$.
\end{prop}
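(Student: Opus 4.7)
The plan is to first reduce to the lazy case and then to control $\rho^{\cS}(\phi(x))$ from below by bounding $p_{n,\phi(x),\cS,\phi(E)}$ for suitable $E\subseteq\dom(\phi)$. Using Lemma \ref{lem:laziness saves the day} applied to $\nu_{t}=(1-t)\nu+t\delta_{\id}$ (which is symmetric and still generates $\cR$), the bound $\|\rho^{\cS}_{\nu_{t}}-\rho^{\cS}_{\nu}\|_{\infty}\leq t(1+\rho(\cR/\cS,\nu))$ shows that any pointwise inequality proved for $\nu_{t}$ implies the same inequality for $\nu$ up to an error that vanishes as $t\to 0$. So we may assume $\nu$ is lazy, and then $\cS_{0}=\cS$, so by the remark after Corollary \ref{cor: local 23} and Lemma \ref{lem:laziness saves the day}, $\rho^{\cS}$ is genuinely $\cS$-invariant and all local co-spectral radii $\rho^{\cS}_{E}$ exist almost everywhere.

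Next, for $m,j\in\N$ set $E_{m,j}=\{x\in\dom(\phi):p_{2m,x,\phi(x)}\geq 1/j\}$. Because $\nu$ generates $\cR$, for almost every $x\in\dom(\phi)$ we have $p_{2m,x,\phi(x)}>0$ for some $m$, and hence $x\in E_{m,j}$ for some $m,j$; thus $\bigcup_{m,j}E_{m,j}=\dom(\phi)$ up to null sets. Fix $m,j$ and $x\in E_{m,j}$ with $\rho^{\cS}(x),\rho^{\cS}(\phi(x))$ defined. For any $y\in[x]_{\cS}\cap E_{m,j}$, the defining property of $PN^{(1)}_{\cR}(\cS)$ gives $\phi(y)\in[\phi(x)]_{\cS}$, and decomposing a $(4m+2k)$-step walk from $\phi(x)$ as $\phi(x)\to x\to y\to \phi(y)$ yields
\[p_{4m+2k,\phi(x),\phi(y)}\geq p_{2m,\phi(x),x}\,p_{2k,x,y}\,p_{2m,y,\phi(y)}\geq \frac{1}{j^{2}}p_{2k,x,y},\]
where we use symmetry of $\nu$ to get $p_{2m,\phi(x),x}=p_{2m,x,\phi(x)}\geq 1/j$. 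Summing over $y\in[x]_{\cS}\cap E_{m,j}$ and using $\phi(y)\in[\phi(x)]_{\cS}\cap\phi(E_{m,j})$ gives
\[p_{4m+2k,\phi(x),\cS,\phi(E_{m,j})}\geq \frac{1}{j^{2}}\,p_{2k,x,\cS,E_{m,j}}.\]

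Taking $(4m+2k)$-th roots and letting $k\to\infty$, the factor $j^{-2/(4m+2k)}$ tends to $1$ and the shift from $2k$ to $4m+2k$ in the exponent is absorbed, yielding $\rho^{\cS}_{\phi(E_{m,j})}(\phi(x))\geq \rho^{\cS}_{E_{m,j}}(x)$ for a.e. $x\in E_{m,j}$. Since $x\in E_{m,j}\subseteq\widetilde{E_{m,j}}$ and $\phi(x)\in\phi(E_{m,j})\subseteq\widetilde{\phi(E_{m,j})}$, Theorem \ref{thm: satuartion invariance} identifies $\rho^{\cS}_{E_{m,j}}(x)=\rho^{\cS}(x)$ and $\rho^{\cS}_{\phi(E_{m,j})}(\phi(x))=\rho^{\cS}(\phi(x))$, giving $\rho^{\cS}(\phi(x))\geq \rho^{\cS}(x)$ for a.e.\ $x\in E_{m,j}$. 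Taking the union over $m,j$ exhausts $\dom(\phi)$ up to null sets. The main obstacle is the bookkeeping: one needs the local-to-global identity from Theorem \ref{thm: satuartion invariance} (hence the reduction to lazy $\nu$, ensuring $\cS_{0}=\cS$) to convert an inequality between local co-spectral radii on different sets into the desired inequality for the single function $\rho^{\cS}$.
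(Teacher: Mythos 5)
Your proposal is correct and follows essentially the same route as the paper's proof: reduce to lazy $\nu$ via Lemma \ref{lem:laziness saves the day}, splice a walk $\phi(x)\to x\to y\to\phi(y)$ on sets where $p_{2m,\cdot,\phi(\cdot)}$ is bounded below, and use Theorem \ref{thm: satuartion invariance} to identify the local co-spectral radii with $\rho^{\cS}$. The only (cosmetic) difference is that you exhaust $\dom(\phi)$ by the level sets $E_{m,j}$ directly, whereas the paper organizes the same exhaustion via a Borel--Cantelli argument.
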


\begin{proof}
First assume that $\nu$ is lazy.
Fix $\phi\in PN^{(1)}_{\cR}(\cS)$, replacing $X$ with a conull set we may assume that $\phi([x]_{\cS}\cap\dom(\phi))\subseteq [\phi(x)]_{\cS}$ for every $x\in X$. Since $\nu$ is lazy and generating, for almost every $x\in \dom(\phi)$ we have that $p_{k,x,\phi(x)}>0$ for all large $k$. So, up to sets of measure zero,
\[\dom(\phi)=\bigcup_{k=1}^{\infty}\{x\in \dom(\phi):p_{k,x,\phi(x)}>0\mbox{ for all $k\geq n$}\}.\]
Since $\nu$ is lazy, this union  is increasing and thus it follows that we may find a sequence $\delta_{n}\to 0$ of positive numbers and an increasing sequence of integers $r_{n}$ so that
\[\sum_{n=1}^{\infty}\mu(\{x\in \dom(\phi):p_{r_{n},x,\phi(x)}\leq \delta_{n}\})<\infty.\]
Set $F_{n}=\{x\in \dom(\phi):p_{r_{n},x,\phi(x)}>\delta_{n}\}$, and $E_{n}=\phi(F_{n}).$ By Borel-Cantelli, for almost every $x\in \dom(\phi)$ we have that $x\in F_{k}$ for all sufficiently large $k.$ By  Theorem \ref{thm: satuartion invariance}, for almost every $x\in \dom(\phi)$ the follow conditions are satisfied for all sufficiently large $k$:
\begin{itemize}
\item $x\in F_{k}$
\item $\rho^{\cS}(\phi(x))=\rho^{\cS}_{E_{k}}(\phi(x))$,
\item $\rho^{\cS}(x)=\rho^{\cS}_{F_{k}}(x)$.
\end{itemize}
Fix an $x\in \dom(\phi)$ which satisfies the above three conditions, and fix $k$ such that the above three bulleted items hold. Then for all $l\in \N$,
\[p_{2l,\phi(x),\cS,E_{k}}=\sum_{z\in [\phi(x)]_{S}\cap E_{k}}p_{2l,\phi(x),z}.\]
Since $\phi([x]_{\cS}\cap\dom(\phi))\subseteq [\phi(x)]_{\cS}$, and $\phi(x)\in E_{k},x\in F_{k}$, we have for all $l\geq r_{k}$:
\[p_{2l,\phi(x),\cS,E_{k}}\geq \sum_{y\in [x]_{S}\cap F_{k}}p_{2l,\phi(x),\phi(y)}\geq \delta_{k}^{2}\sum_{y\in [x]_{S}\cap F_{k}}p_{2(l-r_{k}),x,y}=\delta_{k}^{2}p_{2(l-r_{k}),x,\cS,F_{k}}.\]
where in the second to last step we use symmetry and the fact that $x\in F_{k}$. Thus:
\[\rho^{\cS}(\phi(x))=\rho^{\cS}_{E_{k}}(\phi(x))\geq \rho^{\cS}_{F_{k}}(x)=\rho^{\cS}(x).\]
So
\[\rho^{\cS}\circ \phi\big|_{\dom(\phi)}\geq \rho^{\cS}\big|_{\dom(\phi)}\]
almost everywhere. The general case follows from the lazy case by using Lemma \ref{lem:laziness saves the day}.
\end{proof}

We are now ready to prove that the co-spectral radius does not change under the partial one-sided normalizers. 

\begin{cor}\label{cor:normalizer invariance}
Let $(X,\mu)$ be a standard probability space, and let $\cS\leq \cR$ be discrete, probability measure-preserving equivalence relations defined over $X$, and fix a symmetric and countably supported $\nu\in \Prob([\cR])$ which generates $\cR$.
Suppose that $PN^{(1)}_{\cR}(\cS)$ acts ergodically on $(X,\mu)$. Then $\rho^{\cS}$ is almost surely constant (in particular, by Lemma \ref{lem:partial normalizer decomposition} this applies if $\cS$ is normal in $\cR$ and $\cR$ is ergodic).

\end{cor}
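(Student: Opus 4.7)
The approach is to reduce constancy of $\rho^{\cS}$ to the ergodicity hypothesis by analyzing the upper level sets. Since Theorem \ref{T:existence of the relative spectral radius}(\ref{item:almost sure cs radius}) gives $\rho^{\cS} \leq \rho(\cR/\cS,\nu)$ almost surely, the quantity $c := \esssup \rho^{\cS}$ is a finite real number. For each $\epsilon > 0$ I set $B_{\epsilon} = \{x : \rho^{\cS}(x) > c - \epsilon\}$, which has positive measure by definition of $c$. The goal will be to show $\mu(B_{\epsilon}) = 1$ for every $\epsilon > 0$; combined with $\rho^{\cS} \leq c$ almost everywhere, this forces $\rho^{\cS} = c$ almost surely. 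To invoke the ergodicity of $PN^{(1)}_{\cR}(\cS)$ I need to establish that $1_{B_{\epsilon}}$ is invariant under the $\alpha$-action, equivalently that $\phi(B_{\epsilon} \cap \dom(\phi)) = B_{\epsilon} \cap \ran(\phi)$ modulo null sets for every $\phi \in PN^{(1)}_{\cR}(\cS)$.

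Fixing $\phi \in PN^{(1)}_{\cR}(\cS)$, Proposition \ref{prop:subadditive invariance} gives $\rho^{\cS}\circ\phi\geq \rho^{\cS}$ almost everywhere on $\dom(\phi)$. This yields two elementary inclusions: if $x \in B_{\epsilon} \cap \dom(\phi)$ then $\rho^{\cS}(\phi(x)) \geq \rho^{\cS}(x) > c - \epsilon$, so $\phi(B_{\epsilon} \cap \dom(\phi)) \subseteq B_{\epsilon} \cap \ran(\phi)$; and if $y = \phi(x) \in B_{\epsilon}^{c} \cap \ran(\phi)$ then $\rho^{\cS}(x) \leq \rho^{\cS}(y) \leq c - \epsilon$, so $\phi^{-1}(B_{\epsilon}^{c} \cap \ran(\phi)) \subseteq B_{\epsilon}^{c} \cap \dom(\phi)$. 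Because $\phi$ is measure preserving as a bijection $\dom(\phi) \to \ran(\phi)$, both inclusions translate into measure bounds which I will combine with the complementary decompositions of $\dom(\phi)$ and $\ran(\phi)$ to produce the desired two-sided set equality modulo null sets.

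The step I expect to be the main obstacle is this last upgrade. The two measure bounds produced by the two inclusions in fact point in the same direction ($\mu(B_{\epsilon} \cap \dom(\phi)) \leq \mu(B_{\epsilon} \cap \ran(\phi))$), and on their own do not rule out $\phi$ carrying a positive-measure subset of $B_{\epsilon}^{c} \cap \dom(\phi)$ into $B_{\epsilon}$; an additional structural input is required. To close the gap I plan to use the $\cS$-invariance of $\rho^{\cS}$, which follows by combining Lemma \ref{lem:laziness saves the day} with Corollary \ref{cor: local 23}: for the lazy modification $\nu_{t} = (1-t)\nu + t\delta_{\id}$ one has $\cS_{0} = \cS$, so $\rho^{\cS}_{\nu_{t}}$ is genuinely $\cS$-invariant, and Lemma \ref{lem:laziness saves the day} lets me pass to the limit $t \downarrow 0$ by uniform convergence. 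Pre- and post-composing $\phi$ with elements of a countable subgroup of $[\cS] \subseteq PN^{(1)}_{\cR}(\cS)$ generating $\cS$ should then absorb any residual defect in the set equality, giving the invariance and finishing the proof by ergodicity. In the two distinguished sub-cases the story simplifies markedly: if $\cS$ is ergodic then $[\cS]$ itself already acts ergodically and the $\cS$-invariance of $\rho^{\cS}$ alone forces constancy; if $\cS$ is normal in $\cR$, Lemma \ref{lem:partial normalizer decomposition} produces a countable $C \subseteq PN^{(1)}_{\cR}(\cS)$ whose forward orbits exhaust each $\cR$-class, and applying Proposition \ref{prop:subadditive invariance} along $C$ in both ``directions'' within a given $\cR$-orbit immediately upgrades the monotonicity to full $\cR$-invariance of $\rho^{\cS}$, from which constancy follows.
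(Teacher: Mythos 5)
Your overall strategy (upper level sets of $\rho^{\cS}$, the monotonicity of Proposition \ref{prop:subadditive invariance}, and an ergodicity argument) is the paper's, and your treatment of the two distinguished cases is correct: for ergodic $\cS$ the $\cS$-invariance of $\rho^{\cS}$ already forces constancy, and for normal $\cS$ your closing sentence --- apply Proposition \ref{prop:subadditive invariance} along the countable family $C$ of Lemma \ref{lem:partial normalizer decomposition} in both directions within an $\cR$-class to upgrade monotonicity to full $\cR$-invariance of $\rho^{\cS}$, then invoke ergodicity of $\cR$ --- is precisely the argument the paper gives.

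The genuine gap is in your proposed repair of what you call the main obstacle. You correctly observe that Proposition \ref{prop:subadditive invariance} yields only the one-sided containment $\phi(B_{\epsilon}\cap\dom(\phi))\subseteq B_{\epsilon}\cap\ran(\phi)$ (your second inclusion is just its contrapositive), and that for a partial isomorphism with $\dom(\phi)\ne\ran(\phi)$ this does not self-improve to the two-sided equality needed to make $1_{B_{\epsilon}}$ a fixed point of $\alpha_{\phi}$. But the fix you propose --- $\cS$-invariance of $\rho^{\cS}$ together with pre- and post-composition of $\phi$ by elements of $[\cS]$ --- cannot close this gap: composing with elements of $[\cS]$ never changes which $\cS$-class $\phi(x)$ lands in, whereas the obstruction is exactly that $\phi$ may carry $x$ to a \emph{different} $\cS$-class of $[x]_{\cR}$ on which $\rho^{\cS}$ is strictly larger; $\cS$-invariance says nothing about how $\rho^{\cS}$ varies across the classes in $[x]_{\cR}/\cS$. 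The correct structural input is the one you use only at the very end: a countable $C\subseteq PN^{(1)}_{\cR}(\cS)$ with $[x]_{\cR}=\bigcup_{\phi\in C}\phi(x)$ almost everywhere, so that monotonicity applied both to a $\phi\in C$ carrying $x$ to $y$ and to a $\psi\in C$ carrying $y$ back to $x$ forces $\rho^{\cS}(x)=\rho^{\cS}(y)$; the level sets $\{\rho^{\cS}\geq t\}$ are then $\cR$-saturated and ergodicity of $\cR$ finishes. (You are in good company: the paper's own proof is written only for this covering situation, via Lemma \ref{lem:partial normalizer decomposition}, rather than for the bare $L^{p}$-fixed-point formulation of ergodicity of $PN^{(1)}_{\cR}(\cS)$.)
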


\begin{proof}
Let $C$ be as in Lemma \ref{lem:partial normalizer decomposition}.
By the preceding proposition and countability, for every $t\in [0,1]$
\[E_{t}=\{x\in X:\rho^{\cS}(x)\geq t\}\]
is invariant under $C$, and so by ergodicity of $\cR$ has measure $0$ or $1$ for every $t\in [0,1]$. If $s=\sup\{t:\mu(E_{t})=1\}$, then $\rho^{\cS}\geq s$ almost everywhere, and $\rho^{\cS}(x)<s+\frac{1}{n}$ for almost every $x$ and every $n\in \N$. Thus $\rho^{\cS}(x)=s$ for almost every $x$.
\end{proof}

We close this section with an example illustrating the fact that $\rho^{\cS}(x)$ may fail to be essentially constant if we only assume that $\cR$ is ergodic. Thus we need to assume something special about the inclusion $\cS\leq \cR$.

\begin{example}\label{example: direct sum counterexample}
Let $\cR$ be an ergodic, discrete, measure-preserving equivalence relation on $(X,\mu)$. Let $E\subseteq X$ be a measurable set of positive measure. Let
\[\cS=\cR\cap (E\times E)\cup\{(x,x):x\in E^{c}\}.\]
We claim the following holds.
\emph{Claim:}
\begin{enumerate}[(i)]
    \item \emph{for almost every $x\in E^{c}$, we have $\rho^{\cS}(x)=\rho(\cR,\nu)$,} \label{item: counterexample return to self}
    \item \emph{for almost every $x\in E$, we have $\rho^{\cS}(x)=1.$}\label{item: counterexample return to set}
\end{enumerate}
In particular, if $\cR$ is not hyperfinite, then by \cite[Lemma 2.2.]{BHAExtension} there is a $\nu\in \Prob([\cR])$ so that $\rho(\cR,\nu)<1$ (this also follows from condition (GM) in \cite{KaimanovichAmenability} being equivalent to hyperfiniteness) and this gives an example where the co-spectral radius is not essentially constant.
\begin{proof}[Proof of claim]
Let $\mathcal{T}=\{(x,x):x\in X\}$. Let $\lambda_{x}(\nu)$ be the Markov operator associated to $\nu$ acting on $\ell^{2}([x]_{\cR})$. Then for all $x\in X$ we have
\[\ip{\lambda_{x}(\nu)\delta_{x},\delta_{x}}=p_{2n,x,\cT},\]
and so the co-spectral radius of $\cR$ with respect to $\cT$ agrees with the spectral radius of $[x]_{\cR}$ with respect to $\nu$, i.e. $\rho^{\cT}(x)=\|\lambda_{x}(\nu)\|$. If $(x,y)\in \cR$, then $\lambda_{x}(\nu),\lambda_{y}(\nu)$ are both operators on $\ell^{2}([x]_{\cR})$ and $\lambda_{x}(\nu)=\lambda_{y}(\nu)$. So, by ergodicity, the operator norm $\|\lambda_{x}(\nu)\|$ is essentially constant. Since $\rho(\cR/\cT,\nu)=\rho(\cR,\nu)$, it follows from Theorem \ref{T:existence of the relative spectral radius} (\ref{item:almost sure cs radius})
that $\|\lambda_{x}(\nu)\|$ is almost surely equal to $\rho(\cR,\nu)$.

(\ref{item: counterexample return to self}):
For almost every $x\in E^{c}$ we have $\rho^{\cS}(x)=\rho^{\cT}(x)$, so this follows by the preceding paragraph.

(\ref{item: counterexample return to set}):
For almost every $x\in E$ we have $\rho^{\cS}(x)=\rho_{E}^{\cR}(x)$, with notation as in Corollary \ref{cor: local 23}. So this follows from Theorem \ref{thm: satuartion invariance}.
\end{proof}
\end{example}

\section{Co-spectral radius and Hyperfinite subrelations}\label{sec: hyperfinite relations}

In this section, we investigate the relation between co-spectral radius and hyperfiniteness. Specifically, we show that if $\cS$ is hyperfinite, then $\rho(\cR/\cS,\nu)=\rho(\cR,\nu)$.





\begin{prop}\label{prop:hyperfintie doesn't change sr}
Let $\cR$ be an ergodic, discrete measure-preserving equivalence relation over a standard probability space $(X,\mu).$ Suppose that $\nu\in \Prob([\cR])$ is countably supported, symmetric,  and that $\mathcal{S}\leq\cR$ is hyperfinite. Then $\rho(\cR,\nu)=\rho(\cR/\mathcal{S},\nu).$

\end{prop}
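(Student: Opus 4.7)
The plan is to prove two inequalities. For $\rho(\cR/\cS,\nu)\geq\rho(\cR,\nu)$, hyperfiniteness plays no role: since $x\in[x]_{\cS}$, the pointwise inequality $p_{2n,x,\cS}\geq p_{2n,x,x}$ integrates to $\int p_{2n,x,\cS}\,d\mu\geq\int p_{2n,x,x}\,d\mu$. Taking $2n$-th roots and passing to the limit, Theorem \ref{T:existence of the relative spectral radius}(\ref{I:averageversion subrelation}) applied with the trivial subrelation $\cT=\{(x,x):x\in X\}$ identifies $\lim_{n}\bigl(\int p_{2n,x,x}\,d\mu\bigr)^{1/2n}=\|\lambda_{\cT}(\nu)\|=\rho(\cR,\nu)$, giving $\rho(\cR/\cS,\nu)\geq\rho(\cR,\nu)$.

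For the reverse inequality I first use hyperfiniteness to write $\cS=\bigcup_{n}\cS_{n}$ with each $\cS_{n}$ a finite subrelation (almost every $\cS_{n}$-class finite). By Lemma \ref{L:continuity under increases}, $\rho(\cR/\cS,\nu)=\sup_{n}\rho(\cR/\cS_{n},\nu)$, so it suffices to show $\rho(\cR/\cS_{n},\nu)\leq\rho(\cR,\nu)$ for each $n$; this reduces the problem to the case of a subrelation with almost surely finite classes, which I henceforth denote simply by $\cS$.

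In this finite-class setting I will construct an isometry $V\colon L^{2}(\cR/\cS)\to L^{2}(\cR/\cT)$ that intertwines the Markov operators, namely $V\lambda_{\cS}(\nu)=\lambda_{\cT}(\nu)V$. The natural candidate is
\[
(V\xi)_{x}(y)=|[y]_{\cS}|^{-1/2}\,\xi_{x}([y]_{\cS})\qquad\text{for } y\in[x]_{\cR},
\]
which is well-defined by the finite-class assumption. That $V$ is isometric is immediate: for each fixed $x$, grouping $\sum_{y\in[x]_{\cR}}|[y]_{\cS}|^{-1}|\xi_{x}([y]_{\cS})|^{2}$ by $\cS$-class $c$ collapses each class contribution to $|\xi_{x}(c)|^{2}$. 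Using the canonical identification $\ell^{2}([\phi^{-1}(x)]_{\cR}/\cS)=\ell^{2}([x]_{\cR}/\cS)$ (the fibers depend only on the orbit, not on the basepoint), a direct computation shows both $(V\lambda_{\cS}(\nu)\xi)_{x}(y)$ and $(\lambda_{\cT}(\nu)V\xi)_{x}(y)$ equal $|[y]_{\cS}|^{-1/2}\sum_{\phi}\nu(\phi)\,\xi_{\phi^{-1}(x)}([y]_{\cS})$. Therefore $\|\lambda_{\cS}(\nu)\xi\|=\|\lambda_{\cT}(\nu)V\xi\|\leq\|\lambda_{\cT}(\nu)\|\,\|\xi\|$, whence $\rho(\cR/\cS,\nu)=\|\lambda_{\cS}(\nu)\|\leq\|\lambda_{\cT}(\nu)\|=\rho(\cR,\nu)$.

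The only delicate point is bookkeeping in the direct integral framework (making sense of $V$ fiberwise and checking that everything is measurable), but once the explicit formula is written down the verification is mechanical. The entire role of hyperfiniteness is concentrated in the reduction to finite classes via Lemma \ref{L:continuity under increases}; the rest is pure operator theory.
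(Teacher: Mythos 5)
Your proof is correct, and the reduction is the same as the paper's: both directions split as you describe, and both proofs invoke Lemma \ref{L:continuity under increases} to reduce to a subrelation $\cS_{n}$ with finite classes. Where you genuinely diverge is in handling that finite-class case. The paper takes each $\cS_{n}$ to be the orbit relation of a free action of $\Z/2^{n}\Z$, writes $\int p_{2k,x,\cS_{n}}\,d\mu=\sum_{j=0}^{2^{n}-1}\int p_{2k,x,\alpha^{j}(x)}\,d\mu$ as a finite sum of matrix coefficients $\ip{\lambda(\nu)^{2k}1_{\Delta},\lambda(\alpha^{j})1_{\Delta}}$, and bounds each term by $\int p_{2k,x,x}\,d\mu$ via Cauchy--Schwarz; the prefactor $2^{n}$ disappears after taking $2k$-th roots. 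You instead build the $[\cR]$-equivariant isometry $V\colon L^{2}(\cR/\cS_{n})\to L^{2}(\cR/\cT)$, $(V\xi)_{x}(y)=|[y]_{\cS_{n}}|^{-1/2}\xi_{x}([y]_{\cS_{n}})$, which is exactly the relation analogue of the embedding $\ell^{2}(\Gamma/H)\hookrightarrow\ell^{2}(\Gamma)$, $\delta_{gH}\mapsto|H|^{-1/2}1_{gH}$, for a finite subgroup $H$; the intertwining computation you give is right and yields $\|\lambda_{\cS_{n}}(\nu)\|\leq\|\lambda_{\cT}(\nu)\|$ immediately. Your route buys two things: it needs only the definition of hyperfiniteness (increasing union of finite subrelations), not the extra structure of free cyclic actions, and it is more conceptual, exhibiting $\lambda_{\cS_{n}}$ as a subrepresentation of $\lambda_{\cT}$ rather than estimating coefficients. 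The one caveat is that you route through the identities $\rho(\cR/\cS,\nu)=\|\lambda_{\cS}(\nu)\|$ and $\rho(\cR,\nu)=\|\lambda_{\cT}(\nu)\|$ from Theorem \ref{T:existence of the relative spectral radius} (\ref{I:averageversion subrelation}); the inequality you actually need in the second identity, $\|\lambda_{\cT}(\nu)\|\leq\rho(\cR,\nu)$, is the direction whose proof uses that $\supp(\nu)$ generates $\cR$, a hypothesis not stated in the Proposition. This is harmless (add the generation hypothesis, or replace $\cR$ by the subrelation generated by $\supp(\nu)$, over which $\lambda_{\cT}(\nu)$ is block-diagonal), but worth flagging: the paper's argument works purely with the integrated return probabilities and never needs generation.
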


\begin{proof}
The inequality $\rho(\cR,\nu)\leq \rho(\cR/\mathcal{S},\nu)$ is clear, so it remains to prove the reverse inequality.
Since $\mathcal{S}$ is hyperfinite, we can write $\mathcal{S}=\bigcup_{n}\mathcal{S}_{n}$ where $\mathcal{S}_{n}\leq \cR$ is an increasing sequence and $\cS_{n}$ is an equivalence relation where almost every equivalence class is finite. 
Note that $\cS_{n}$ converges to $\cS$ in the topology defined by Lemma \ref{item: monotone continuity}. Thus by Lemma \ref{item: monotone continuity}, it is enough to show that $\rho(\cR/\mathcal{S}_{n},\nu)\leq \rho(\cR,\nu)$ for every $n\in \N.$ For an integer $\ell$, let $E_{\ell}=\{x\in X:|[x]_{\cS_{n}}|\leq \ell\}$. Then $E_{k}$ are an increasing sequence of measurable subsets whose union is conull. By Theorem \ref{T:existence of the relative spectral radius}, we know that $\rho(\cR/\cS_{n},\nu)=\|\rho^{\cS_{n}}\|_{\infty}$. So it suffices to show that $\|\rho^{\cS_{n}}1_{E_{\ell}}\|_{\infty}\leq \rho(\cR,\nu)$ for all $\ell,n$.

Fix integer $n,\ell\in \N.$ We may then choose $\phi_{1},\cdots,\phi_{\ell}\in [[\cS_{n}]]$ so that for every $x\in E_{\ell}$,
\[[x]_{\cS_{n}}=\{\phi_{j}(x):1\leq j\leq \ell,x\in \dom(\phi_{j})\},\]
and with $\phi_{1}=\id$. 
Then, for every $x\in E_{\ell}$, and every $k\in \N$,
\[p_{2k,x,\cS_{n}}=\sum_{j=1}^{\ell}p_{2k,x,\phi_{j}(x)}1_{\dom(\phi_{j})}(x)\]
Set 
\[\rho^{\cS_{n}}(x)=\lim_{n\to\infty}p_{2k,x,\cS_{n}}^{1/2k},\textnormal{ and } \rho(x)=\lim_{n\to\infty}p_{2k,x,x}^{1/2k}.\]
Then:
\[\rho^{\cS_{n}}(x)=\lim_{k\to\infty}p_{2k,x,\cS}^{1/2k}=\lim_{k\to\infty}\left(\sum_{j=1}^{\ell}p_{2k,x,\phi_{j}(x)}1_{\dom(\phi_{j})})(x)\right)^{1/2k}=\lim_{k\to\infty}\max_{j}\left(p_{2k,x,\phi_{j}(x)}1_{\dom(\phi_{j})}\right)^{1/2k}.\]
For each $j\in \{0,\cdots,\ell\}$ such that $x\in \dom(\phi_{j})$, we may choose a non-negative integer $t_{j}$ so that 
$p_{t_{j},x,\phi_{j}(x)}>0$. Thus
\[p_{2k,x,\phi_{j}(x)}\leq p_{2(k+t_{j}),x,x}p_{t_{j},x,\phi_{j}(x)}^{-2}.\]
Thus
 \[\rho^{\cS_{n}}_{E_{\ell}}(x)\leq \rho(x)\]
for almost every $x$. Hence
\[\|\rho^{\cS_{n}}1_{E_{\ell}}\|_{\infty}\leq \|\rho\|_{\infty}=\rho(\cR,\nu).\]

\end{proof}

In the group context, the analogous statement is that if $\Gamma$ is a countable, discrete group and $\nu \in \Prob(\Gamma)$ is symmetric with $\ip{\supp(\nu)}=\Gamma$, then for any amenable $H\leq \Gamma$ we have that $\rho(\Gamma,\nu)=\rho(\Gamma/H,\nu)$. It is known that the converse fails, namely there are cases of $\Gamma,\nu$ and nonamenable $H\leq \Gamma$ so that $\rho(\Gamma,\nu)=\rho(\Gamma/H,\nu)$. It is a theorem of Kesten \cite[Theorem 2]{KestenSR} that the converse is true if we assume in addition that $H$ is normal. This was generalized to invariant random subgroups by Ab\'{e}rt-Glasner-Vir\'{a}g (see \cite{AVY14}). Normal subgroups and IRS's can both be realized as a special case of normal subrelations. So it is natural to ask if $\cR$ is an ergodic, probability measure-preserving, discrete relation and if $\nu\in \Prob([\cR])$ is symmetric and generating, and if $\cS\triangleleft \cR$ has $\rho(\cR,\nu)=\rho(\cR/\cS,\nu)$, do we necessarily have that $\cS$ is hyperfinite?  We will show that the answer is ``no" in general, but it might be helpful first to study a special case.

In general, a normal subrelation $\cS\triangleleft \cR$ can be expressed in terms of the partial one-sided normalizers of $\cS$ generating $\cR$ (recall our discussion for Section \ref{sec:invariance of almost sure sr}). Let us begin by investigating the case that $\cR$ is generated by $N_{\cR}(\cS)=\{\phi\in [\cR]:\phi([x]_{\cS})=[\phi(x)]_{\cS}\mbox{ for almost every $x\in X$}\}$. Moreover, to simplify things, assume that $\nu$ is the uniform measure on a finite, generating subset $\Phi$ of $N_{\cR}(\cS)$. It turns out that we can naturally translate this special case into a generalization of the Ab\'{e}rt-Glasner-Virag result.

\begin{prop}\label{prop:IRS from normality}
Let $\cR$ be a discrete, ergodic, probability measure-preserving equivalence relation over a standard probability space $(X,\mu)$. Let $\cS\leq \cR$, and let $\Gamma\leq N_{\cR}(\cS)$ be a countable subgroup. For $x\in X$, set $H_{x}=\{\phi\in\Gamma:\phi(x)\in [x]_{\cS}\}$.
\begin{enumerate}[(i)]
\item \label{item:field of IRS's from normality} For almost every $x\in X$, we have that $H_{x}$ is a subgroup of $\Gamma$. Moreover, for almost every $x\in X$ and every $\phi\in\Gamma$ we have $H_{\phi(x)}=\phi H_{x}\phi ^{-1}$.
\item \label{item:IRS's recover subreln} If $\Gamma$ generates $\cR$, then for almost every $x\in X$ we have that $[x]_{\cS}=H_{x}x$.
\item \label{item: translation of Kesten problem} Suppose that $\Gamma$ generates $\cR$, and that $\nu\in \Prob(\Gamma)$ is symmetric with $\ip{\supp(\nu)}=\Gamma$. Then for almost every $x\in X$ we have
\[\rho(\cR,\nu)=\rho(\Gamma/\Stab_{\Gamma}(x),\nu),\,\,\, \rho(\cR/\cS,\nu)=\rho(\Gamma/H_{x},\nu).\]
\end{enumerate}
\end{prop}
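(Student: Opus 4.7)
The plan is to handle the three parts as a bookkeeping exercise followed by a translation: (i) verifies the group axioms for $H_x$ using the normalizing property of $\Gamma$, (ii) is an immediate consequence of $\Gamma$ generating $\cR$, and (iii) rewrites the $\cS$-return probabilities on $[x]_{\cR}$ as hitting probabilities of subgroups in $\Gamma$ and then applies Theorem \ref{T:existence of the relative spectral radius}(\ref{item: ergodic normalizer}).

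For (i) I would first reformulate $\phi\in H_x$ as $[\phi(x)]_{\cS}=[x]_{\cS}$. Since every $\phi\in\Gamma$ lies in $N_{\cR}(\cS)$, the identity $\phi([x]_{\cS})=[\phi(x)]_{\cS}$ holds almost surely, and countability of $\Gamma$ lets me discard a single null set on which this fails simultaneously for every element of $\Gamma$. Working on its complement: the identity is in $H_x$; if $\phi,\psi\in H_x$ then $\phi\psi(x)\in\phi([x]_{\cS})=[\phi(x)]_{\cS}=[x]_{\cS}$; if $\phi\in H_x$, applying $\phi^{-1}$ to $[\phi(x)]_{\cS}=[x]_{\cS}$ yields $\phi^{-1}(x)\in [x]_{\cS}$. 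The conjugation identity follows by the same manipulation: $\psi\in H_{\phi(x)}$ iff $\psi\phi(x)\in [\phi(x)]_{\cS}=\phi([x]_{\cS})$ iff $\phi^{-1}\psi\phi(x)\in [x]_{\cS}$ iff $\phi^{-1}\psi\phi\in H_x$. Part (ii) is then immediate: $H_x\cdot x\subseteq [x]_{\cS}$ by definition, while any $y\in[x]_{\cS}\subseteq [x]_{\cR}=\Gamma x$ has the form $y=\phi(x)$ for some $\phi\in\Gamma$, which then lies in $H_x$.

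For (iii) the main computation is the elementary identity
\[
p_{2n,x,[x]_{\cS}}^{\nu}\;=\;\sum_{y\in [x]_{\cS}}\ \sum_{\phi\in\Gamma:\,\phi(x)=y}\nu^{*2n}(\{\phi\})\;=\;\nu^{*2n}(H_x),
\]
where the last equality uses (ii) to identify $\{\phi\in\Gamma:\phi(x)\in [x]_{\cS}\}$ with $H_x$. Taking $2n$-th roots, the almost-sure existence of $\rho^{\cS}(x)$ from Theorem \ref{T:existence of the relative spectral radius}(\ref{item:almost sure cs radius}) (together with the classical existence of the limit on the group side) yields $\rho^{\cS}(x)=\rho(\Gamma/H_x,\nu)$ almost surely. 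Running the same computation for the trivial subrelation $\cT=\{(x,x):x\in X\}$, in which case $H_x$ becomes $\Stab_{\Gamma}(x)$, gives $\rho^{\cT}(x)=\rho(\Gamma/\Stab_{\Gamma}(x),\nu)$. Finally, since $\Gamma\subseteq N_{\cR}(\cS)\subseteq PN^{(1)}_{\cR}(\cS)$ and $\Gamma$ generates the ergodic relation $\cR$, the partial one-sided normalizer of $\cS$ in $\cR$ acts ergodically; applying Theorem \ref{T:existence of the relative spectral radius}(\ref{item: ergodic normalizer}) to both $\cS$ and $\cT$ shows that $\rho^{\cS}$ and $\rho^{\cT}$ are almost surely constant with values $\rho(\cR/\cS,\nu)$ and $\rho(\cR/\cT,\nu)=\rho(\cR,\nu)$ respectively.

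The step requiring the most care, and the only one I would flag as a possible obstacle, is combining the countably many almost-sure normalizing identities $\phi([x]_{\cS})=[\phi(x)]_{\cS}$ into a single conull set on which all the pointwise manipulations above run; but since $\Gamma$ is countable this is routine, and I do not expect any substantive difficulty in this proposition.
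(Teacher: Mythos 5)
Your proposal is correct and follows essentially the same route as the paper: part (iii) in both cases reduces to the pointwise identity $p_{2n,x,\cS}^{\nu}=\nu^{*2n}(H_{x})$ (the paper phrases this via the $\Gamma$-equivariant bijection $\Gamma/H_{x}\to [x]_{\cR}/\cS$, you via a direct double sum, which is the same computation), followed by the classical group-side limit and an appeal to Theorem \ref{T:existence of the relative spectral radius} (\ref{item: ergodic normalizer}) to get almost sure constancy. The only cosmetic difference is that the paper disposes of (\ref{item:field of IRS's from normality}) and (\ref{item:IRS's recover subreln}) by citing \cite[Proposition 8.10]{RobinThesis}, whereas you verify them directly; your verification is sound, and the null-set bookkeeping you flag is indeed routine since all your manipulations only use the identity $\phi([x]_{\cS})=[\phi(x)]_{\cS}$ at the base point $x$.
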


\begin{proof}
Items (\ref{item:field of IRS's from normality}),(\ref{item:IRS's recover subreln}) follow from \cite[Proposition 8.10]{RobinThesis}.


(\ref{item: translation of Kesten problem}): Since $\Gamma$ generates $\cR$, for almost every $x\in X$ we have a $\Gamma$-equivariant bijection $f\colon \Gamma/\Stab_{\Gamma}(x)\to [x]_{\cR}$ satisfying $f(\phi(x)\Stab_{\Gamma}(x))$ for every $\phi\in\Gamma$. Moreover, since $\Gamma\leq N_{\cR}(\cS)$, for almost every $x\in X$ we have for all $\phi,\psi\in\Gamma$ that $(\phi(x),\psi(x))\in\cS$ if and only if $(\psi^{-1}\phi(x),x)\in \cS$, which is equivalent to saying that $\phi H_{x}=\psi H_{x}$. Thus  $f$ induces a $\Gamma$-equivariant bijection $\overline{f}\colon \Gamma/H_{x}\to [x]_{\cR}/\cS$ satisfying $\overline{f}(\phi H_{x})=[\phi(x)]_{\cS}$.

For $H\leq \Gamma$, let $p_{k,H}$ be the probability that the random walk on $\Gamma$ corresponding to $\nu$ and starting at $1$ is at $H$ after $k$ steps. Then by the above paragraph we have for almost every $x\in X$ that
\[p_{k,x,x}=p_{k,\Stab_{\Gamma}(x)},\mbox{ and }\, p_{k,x,\cS}=p_{k,H_{x}}.\]
Thus for almost every $x\in X$ we have
\[\lim_{k\to\infty}p_{2k,x,x}^{1/2k}=\rho(\Gamma/\Stab(x)),\mbox{ and }\lim_{k\to\infty}p_{2k,x,\cS}^{\frac{1}{2k}}=\rho(\Gamma/H_{x}).\]
Since we are assuming that $\cR$ is ergodic and that $\Gamma\leq N_{\cR}(\cS)$ generates $\cR$, the result now follows from Theorem \ref{T:existence of the relative spectral radius} (\ref{item: ergodic normalizer}).
\end{proof}

The above Proposition motivates the following definition, which first appeared in \cite[Section 8]{RobinThesis} (under slightly different terminology). Recall that if $\Gamma$ is a countable, discrete group $\Sub(\Gamma)$ denotes the space of subgroups of $\Gamma$. We may identify each subgroup with its indicator function and thus view $\Sub(\Gamma)\subseteq \{0,1\}^{\Gamma}$. We may thus regard $\Sub(\Gamma)$ as a compact, metrizable space by giving $\Sub(\Gamma)$ the restriction of the product topology. An \emph{invariant random subgroup} is a Borel probability measure $\eta$ on $\Sub(\Gamma)$ which is invariant under the conjugation action of $\Gamma$ on $\Sub(\Gamma)$. We let $\IRS(\Gamma)$ be the space of invariant random subgroups of $\Gamma$.

\begin{defn}[Section 8 of \cite{RobinThesis}]
Suppose that $\eta_{1},\eta_{2}\in \IRS(\Gamma)$. A \emph{monotone joining of $\eta_{1}$ with $\eta_{2}$ is a $\zeta\in \Prob(\Sub(\Gamma)\times \Sub(\Gamma))$} which is invariant under $\Gamma\actson \Sub(\Gamma)\times \Sub(\Gamma)$ given by $g\cdot (K,H)=(gKg^{-1},gHg^{-1})$ and which satisfies
\[\zeta(\{(K,H):K\subseteq H\})=1.\]
\end{defn}
We will often use probabilistic language and think of the $\Sub(\Gamma)$-valued random variables $K,H$ with distribution $\eta_{1},\eta_{2}$ as the coupled IRS's. Thus we will often say ``let $K\leq H$ be a monotone joining of IRS's $H,K$".

In our context, given $\cS\leq \cR$ and $\Gamma\leq N_{\cR}(\cS)$ we get a monotone joining of IRS's by considering the pushforward of $\mu$ under the map $\Gamma\to \Sub(\Gamma)\times \Sub(\Gamma)$ given by $x\mapsto (\Stab_{\Gamma}(x),H_{x})$. We can also reverse this construction.

\begin{thm}[Theorem 8.15 of \cite{RobinThesis}]\label{prop:IRS realization}
Let $\zeta\in \Prob(\Sub(\Gamma)\times \Sub(\Gamma))$ be a monotone joining of IRS's $\eta_{1},\eta_{2}$. Then there is a standard probability space $(X,\mu)$, a probability measure-preserving action $\Gamma\actson (X,\mu)$ and a normal subrelation $\cS$ of the orbit equivalence relation of $\Gamma\actson (X,\mu)$ with the following property. We have that $\Gamma\leq N_{\cR}(\cS)$ and if we set
\[H_{x}=\{g\in \Gamma:(gx,x)\in \cS\},\]
 and define $\Theta\colon X\to \Sub(\Gamma)\times \Sub(\Gamma)$ by $\Theta(x)=(\Stab_{\Gamma}(x),H_{x})$, then $\zeta=(\Theta)_{*}(\mu)$.
\end{thm}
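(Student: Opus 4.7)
The plan is to build $(X,\mu)$ as a two-step extension: first realize the smaller IRS $\eta_{1}$ as the stabilizer distribution of a standard p.m.p.\ action, and then adjoin a $\Sub(\Gamma)$-coordinate recording the larger subgroup $H$, sampled according to the conditional distribution of $\zeta$ given its first factor. The subrelation $\cS$ is then defined by letting its classes be the $H$-orbits in the new space.

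Concretely, start from a p.m.p.\ action $\Gamma\acton (X_{0},\mu_{0})$ realizing $\eta_{1}$ as its stabilizer IRS; the existence of such an action is a standard fact about invariant random subgroups. Write $K_{x_{0}}=\Stab_{\Gamma}(x_{0})$, so $(K_{x_{0}})_{*}\mu_{0}=\eta_{1}$. Disintegrate
\[\zeta=\int \delta_{K}\otimes \zeta_{K}\,d\eta_{1}(K),\]
where $\zeta_{K}\in \Prob(\Sub(\Gamma))$ is supported on $\{H:H\supseteq K\}$. The diagonal-conjugation invariance of $\zeta$ together with uniqueness of disintegration and countability of $\Gamma$ yield, for $\eta_{1}$-a.e.\ $K$ and every $g\in \Gamma$, the equivariance
\[(c_{g})_{*}\zeta_{K}=\zeta_{gKg^{-1}},\qquad c_{g}(H):=gHg^{-1}.\]

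Now set $X=X_{0}\times \Sub(\Gamma)$, equip it with the diagonal action $g\cdot(x_{0},H)=(gx_{0},gHg^{-1})$, and define $\mu$ by
\[\mu(A)=\int_{X_{0}}\zeta_{K_{x_{0}}}\bigl(\{H:(x_{0},H)\in A\}\bigr)\,d\mu_{0}(x_{0}).\]
The equivariance above, combined with the $\Gamma$-invariance of $\mu_{0}$ and of the map $x_{0}\mapsto K_{x_{0}}$, immediately gives $g_{*}\mu=\mu$. Since $\zeta_{K_{x_{0}}}$ is supported on supergroups of $K_{x_{0}}$ and every subgroup lies in its own normalizer, $K_{x_{0}}\subseteq H\subseteq N_{\Gamma}(H)$ for $\mu$-a.e.\ $(x_{0},H)$, whence $\Stab_{\Gamma}((x_{0},H))=K_{x_{0}}\cap N_{\Gamma}(H)=K_{x_{0}}$. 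Define
\[\cS=\{((x_{0},H),(hx_{0},H)):(x_{0},H)\in X,\ h\in H\}.\]
Because $hHh^{-1}=H$ for $h\in H$, this is a Borel equivalence relation contained in the orbit relation $\cR$ of the $\Gamma$-action. Normality and $\Gamma\leq N_{\cR}(\cS)$ follow from the computation $g\cdot(hx_{0},H)=((ghg^{-1})\cdot gx_{0},gHg^{-1})$ together with $ghg^{-1}\in gHg^{-1}$; this shows each action map preserves $\cS$, and since $\Gamma$ generates $\cR$ the one-sided normalizer of $\cS$ generates $\cR$ as well.

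It remains to identify $H_{(x_{0},H)}$. The inclusion $H\subseteq H_{(x_{0},H)}$ is immediate. Conversely, if $g\in H_{(x_{0},H)}$ then $((gx_{0},gHg^{-1}),(x_{0},H))\in \cS$ forces $gHg^{-1}=H$ and $gx_{0}\in Hx_{0}$, so $g\in HK_{x_{0}}=H$ (using $K_{x_{0}}\subseteq H$). Consequently the map $\Theta(x_{0},H)=(\Stab_{\Gamma}(x_{0},H),H_{(x_{0},H)})=(K_{x_{0}},H)$ pushes $\mu$ forward to $\int \delta_{K}\otimes \zeta_{K}\,d\eta_{1}(K)=\zeta$, as required. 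The only genuinely technical point is the equivariance of the disintegration in the displayed equation above; this is a routine application of the uniqueness of conditional measures, where countability of $\Gamma$ is used to exchange ``for each $g$, a.e.\ $K$'' with ``a.e.\ $K$, for every $g$''.
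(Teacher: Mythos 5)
Your construction is correct. The paper itself gives no argument here --- it simply cites Theorem 8.15 of Tucker-Drob's thesis --- so there is no internal proof to compare against; what you have written is a complete, self-contained realization via the natural skew-product: realize $\eta_{1}$ as a stabilizer IRS on $(X_{0},\mu_{0})$, disintegrate $\zeta$ over its first marginal, and form the relatively independent extension $X=X_{0}\times\Sub(\Gamma)$ with the fibred measure. All the key verifications are present and sound: the conjugation-equivariance of $K\mapsto\zeta_{K}$ (with the countability of $\Gamma$ used to upgrade ``for each $g$, a.e.\ $K$'' to ``a.e.\ $K$, for all $g$''), the invariance of $\mu$, the computation $\Stab_{\Gamma}(x_{0},H)=K_{x_{0}}\cap N_{\Gamma}(H)=K_{x_{0}}$ using $K_{x_{0}}\subseteq H\subseteq N_{\Gamma}(H)$, the fact that $h\cdot(x_{0},H)=(hx_{0},H)$ for $h\in H$ so that $\cS$ really sits inside the orbit relation, and the identification $H_{(x_{0},H)}=H$ via $h^{-1}g\in K_{x_{0}}\subseteq H$. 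The normality claim is also justified in the paper's sense (FSZ): since every $g\in\Gamma$ lies in $N_{\cR}(\cS)$ and $\Gamma$ generates $\cR$, the classes $[gx]_{\cS}$, $g\in\Gamma$, cover $[x]_{\cR}$. One could tighten the phrasing of the final pushforward step by noting explicitly that $\mu=\int_{X_{0}}\delta_{x_{0}}\otimes\zeta_{K_{x_{0}}}\,d\mu_{0}(x_{0})$ and that $(K_{\bullet})_{*}\mu_{0}=\eta_{1}$, but the argument as given is complete.
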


So by Proposition \ref{prop:IRS from normality} our question on co-spectral radii translates to the following. Suppose that $K\leq H$ is a monotone joining of IRS's, under what conditions do we have that $\rho(\Gamma/H,\nu)=\rho(\Gamma/K,\nu)$ almost surely? Let us first that this is always true when $K$ is coamenable in $H$.

\begin{prop}\label{prop:coamenability cs change}
Suppose that $K\leq H\leq \Gamma$ are countable, discrete groups and that $\nu\in \Prob(\Gamma)$. If $K$ is coamenable in $H,$ then $\rho(\Gamma/H,\nu)=\rho(\Gamma/K,\nu)$.
\end{prop}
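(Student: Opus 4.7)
The plan is to reduce the statement to a representation-theoretic fact and then apply induction in stages together with Fell's theorem on continuity of induction with respect to weak containment.

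The easy half of the equality is immediate: since $K\subseteq H$, for every $n\in \N$ we have $p_{n,K}=\nu^{*n}(K)\leq \nu^{*n}(H)=p_{n,H}$, so taking $(2n)$-th roots and passing to the limit gives $\rho(\Gamma/K,\nu)\leq \rho(\Gamma/H,\nu)$. Thus the content lies in the reverse inequality.

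For the reverse inequality, the starting point is to identify $\rho(\Gamma/L,\nu)$ (for $L\in\{K,H\}$) with the operator norm $\|\lambda_{\Gamma/L}(\nu)\|$ of the Markov operator of $\nu$ on $\ell^{2}(\Gamma/L)$, where $\lambda_{\Gamma/L}$ denotes the quasi-regular representation of $\Gamma$. This is the classical group-theoretic precursor of Theorem \ref{T:existence of the relative spectral radius} (\ref{I:averageversion subrelation}) and follows from the spectral theorem applied to the self-adjoint operator $\lambda_{\Gamma/L}(\nu)$ (symmetry of $\nu$ is implicit, as in the rest of this section). Induction in stages then yields
\[\lambda_{\Gamma/K}=\ind_{K}^{\Gamma}1_{K}\cong \ind_{H}^{\Gamma}(\ind_{K}^{H}1_{K})=\ind_{H}^{\Gamma}\lambda_{H/K}.\]
By a theorem of Eymard, coamenability of $K$ in $H$ is equivalent to the quasi-regular representation $\lambda_{H/K}$ of $H$ admitting almost invariant unit vectors, i.e., to $1_{H}\prec \lambda_{H/K}$ in the sense of weak containment. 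Fell's theorem on continuity of induction with respect to weak containment upgrades this to
\[\lambda_{\Gamma/H}=\ind_{H}^{\Gamma}1_{H}\prec \ind_{H}^{\Gamma}\lambda_{H/K}=\lambda_{\Gamma/K}\]
as unitary representations of $\Gamma$. Weak containment implies domination of operator norms on $\ell^{1}(\Gamma)$, and in particular $\|\lambda_{\Gamma/H}(\nu)\|\leq \|\lambda_{\Gamma/K}(\nu)\|$, giving the desired inequality.

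I do not anticipate a serious obstacle: induction in stages, Fell's continuity of induction, and Eymard's characterization of coamenability are all standard. If one wishes to avoid invoking Fell's theorem, a direct alternative is to build approximately invariant vectors in $\ell^{2}(\Gamma/K)$ that asymptotically witness the norm of $\lambda_{\Gamma/H}(\nu)$: given a F\o lner sequence $(F_{n})$ in $H/K$ for $\lambda_{H/K}$ and a finitely supported unit vector $\xi \in \ell^{2}(\Gamma/H)$, form $\eta_{n}\in \ell^{2}(\Gamma/K)$ by replacing each atom $\xi(gH)\delta_{gH}$ with $\xi(gH)|F_{n}|^{-1/2}1_{gF_{n}}$, and check that $\langle \lambda_{\Gamma/K}(\nu)^{2m}\eta_{n},\eta_{n}\rangle \to \langle \lambda_{\Gamma/H}(\nu)^{2m}\xi,\xi\rangle$ for each fixed $m$ as $n\to\infty$, which suffices by the spectral theorem.
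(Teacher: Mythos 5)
Your proof is correct and follows essentially the same route as the paper: the paper likewise uses the characterization of coamenability as $1_{H}\prec\lambda_{H/K}$, continuity of induction under weak containment (via induction in stages) to get $\lambda_{\Gamma/H}\prec\lambda_{\Gamma/K}$, and the fact that weak containment dominates operator norms, with the easy inequality noted as direct. The F\o lner-sequence alternative you sketch is a reasonable way to avoid citing Fell's theorem, but it is not needed.
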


\begin{proof}
To say that $K$ is coamenable in $H$ means that the trivial representation of $H$ is weakly contained in the quasi-regular representation of $H$ on $\ell^{2}(H/K)$. By induction of representations, it follows that the quasi-regular representation of $\Gamma$ on $\ell^{2}(\Gamma/H)$ is weakly contained in the quasi-regular representation of $\Gamma$ on $\ell^{2}(\Gamma/K)$ \cite[Example E.1.8 (ii),Theorem F.3.5]{BHV}. This implies \cite[Theorem F.4.4]{BHV} that
\[\rho(\Gamma/H,\nu)=\|\lambda_{H}(\nu)\|\leq \|\lambda_{K}(\nu)\|=\rho(\Gamma/K,\nu),\]
where $\lambda_{H}\colon \Gamma\to \mathcal{U}(\ell^{2}(\Gamma/H))$, $\lambda_{K}\colon \Gamma\to \mathcal{U}(\ell^{2}(\Gamma/H))$ are the quasi-regular representations. The reverse inequality is direct to argue, so this completes the proof.
\end{proof}

Notice that if $\Gamma,H_{x},\Stab(x)$ are as in  Theorem \ref{prop:IRS realization}, then having $\Stab(x)$ be coamenable in $H_{x}$ does not guarantee that $\cS$ is hyperfinite. So from Theorem \ref{prop:IRS realization} and Proposition \ref{prop:coamenability cs change}, we get an example of an equivalent relation $\cR$, a $\nu\in \Prob([\cR])$ and a normal $\cS\triangleleft \cR$, so that $\cS$ is not hyperfinite, and yet $\rho(\cR,\nu)=\rho(\cR/\cS,\nu)$. So a naive generalization of Kesten's theorem does not hold in this context. In fact, there is even a monotone joining of IRS's $K\leq H$ of $\Gamma$ so that $K\leq H$ is almost surely not co-amenable, and yet still $\rho(\Gamma/K,\nu)=\rho(\Gamma/H,\nu)<1$ for every finitely supported $\nu\in \Prob(\Gamma)$ whose support generates $\Gamma$.

\begin{example}\textbf{ Counterexample to the converse of Proposition \ref{prop:coamenability cs change}.} Let $G$ be a non-amenable finitely generated group and let $\Gamma=\F_{2}\wr G$ be the wreath product of $\F_{2}$ with $G$. Let $N:=\bigoplus_{\F_2}G$. Recall that $\Gamma=\F_2\rtimes N$ with $\gamma\in \F_2$ acting on $N$ by $\gamma((g_\lambda)_\lambda)\gamma^{-1}=(g_\lambda)_{\gamma\lambda}$. For any subset $\Sigma\subset \F_2$ let $N_\Sigma$ be the subgroup $\bigoplus_{\lambda\in\Sigma}G.$ By construction, $N_\Sigma$ is a normal subgroup of $N$ and for every $\gamma\in \F_2$ we have $\gamma N_\Sigma\gamma^{-1}=N_{\gamma\Sigma}$. This defines a $\Gamma$-equivariant map
\[N_{\bullet}\colon \{0,1\}^{\F_2}\to \Sub(\Gamma).\]
A percolation on $\F_2$ is random subset of $\F_2$ with distribution invariant under the left translations. 
For any percolation $P\in \{0,1\}^{\F_2}$ the group $N_P$ is an invariant random subgroup of $\Gamma$. Let $p<q\in (0,1)$ and let $(P,Q)\in \{0,1\}^{\F_2}\times \{0,1\}^{\F_2}$ be a $\Gamma$-invariant coupling of two Bernoulli percolations of parameters $p$ and $q$ respectively such that $P\subset Q$ a.s. This can be arranged by first choosing $P$ as Bernoulli percolation of parameter $p$ and then declaring $Q$ to be the union of $P$ and an independent copy of a Bernoulli percolation with parameter $\frac{q-p}{1-p}$. In this way we construct an invariant random couple of subgroups $N_{P}\subset N_{Q}$.

The set $Q\setminus P$ is infinite a.s. so the quotient $N_Q/N_P$ is the direct sum of infinitely many copies of $G$. In particular $N_P$ is almost surely not co-amenable in $N_Q$. Let $S$ be a finite generating set for $\Gamma$. We claim that
\[ \rho(\Gamma/N_P,S)=\rho(\Gamma/N_Q,S)=\rho(\Gamma/N,S).\]
This will follow quite quickly from the semi-continuity properties of the co-spectral radius. Since $P$ is a Bernoulli percolation, for any $n\in\mathbb N$ we will almost surely find $\gamma_n\in\F_2$ such that $\gamma P$ contains the $R$-ball around the identity. The sequence of subgroups $N_{\gamma_nP}$ converges to $N$ in $\Sub(\Gamma)$. On the other hand $\rho(\Gamma/N_{\gamma_nP},S)=\rho(\Gamma/N_{P},S)$, because $N_{\gamma_nP}=\gamma_n N_P\gamma_n^{-1}.$ By Lemma \ref{lem: semicont} we conclude that $\rho(\Gamma/N_P,S)\geq \rho(\Gamma/N,S)$. The reverse inequality is clear so $\rho(\Gamma/N_P,S)=\rho(\Gamma/N,S).$ In the same way we show that $\rho(\Gamma/N_Q,S)=\rho(\Gamma/N,S).$
\end{example}
\begin{lem}\label{lem: semicont}
Let $\Gamma$ be a countable group generated by a finite symmetric set $S$. Let $\Lambda_n$ be a sequence of subgroups of $\Gamma$ converging to a subgroup $\Lambda_\infty\subset \Gamma$ in $\Sub(\Gamma)$. We have
\[\liminf_{n\to\infty} \rho(\Gamma/\Lambda_n,S)\geq \rho(\Gamma/\Lambda_\infty,S).\]
\end{lem}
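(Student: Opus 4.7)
The plan is to exploit two elementary facts: that the $2n$-step return probability to a subgroup only depends on a finite piece of that subgroup, and that in the pure subgroup (as opposed to subrelation) setting, the co-spectral radius can be written as a supremum rather than a limit.

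First I would record that for each fixed $n$, the quantity $p_{2n}(\Lambda) := \mathbb{P}(g_{2n} \in \Lambda)$, where $g_k$ is the (lazy) simple random walk on $\Cay(\Gamma,S)$, depends only on $\Lambda \cap B_S(2n)$, where $B_S(2n)$ is the $2n$-ball in $\Cay(\Gamma, S)$. This is because $p_{2n}(\Lambda)$ is a sum over words $s_{i_1}\cdots s_{i_{2n}}$ of length at most $2n$, and the condition for each summand only sees whether finitely many elements of word length at most $2n$ lie in $\Lambda$. The assumption $\Lambda_n \to \Lambda_\infty$ in $\Sub(\Gamma)$ means exactly that for every finite $F \subset \Gamma$ eventually $\Lambda_k \cap F = \Lambda_\infty \cap F$, and so for each $n$ there is a $k_0(n)$ with $p_{2n}(\Lambda_k) = p_{2n}(\Lambda_\infty)$ for all $k \ge k_0(n)$.

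Next I would use that for a subgroup $\Lambda \leq \Gamma$ and a symmetric $\nu \in \Prob(\Gamma)$, the return sequence $p_{2n}(\Lambda) = \nu^{*2n}(\Lambda)$ is super-multiplicative in $n$: by expanding the convolution and restricting the sum to $g \in \Lambda$, using that $g^{-1}\Lambda = \Lambda$ for such $g$, one gets $p_{2(n+m)}(\Lambda) \ge p_{2n}(\Lambda)\,p_{2m}(\Lambda)$. Fekete's lemma then yields
\[
\rho(\Gamma/\Lambda, S) \;=\; \lim_{n\to\infty} p_{2n}(\Lambda)^{1/(2n)} \;=\; \sup_{n} p_{2n}(\Lambda)^{1/(2n)}.
\]

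Combining these two observations, for every fixed $n$,
\[
\liminf_{k\to\infty} \rho(\Gamma/\Lambda_k, S) \;\ge\; \liminf_{k\to\infty} p_{2n}(\Lambda_k)^{1/(2n)} \;=\; p_{2n}(\Lambda_\infty)^{1/(2n)},
\]
and taking the supremum over $n$ on the right gives $\liminf_{k} \rho(\Gamma/\Lambda_k, S) \ge \rho(\Gamma/\Lambda_\infty, S)$, as required. There is no real obstacle: the only subtlety is the super-multiplicativity step, which fails for subrelations (hence the need for the whole 2-3 machinery in the main text) but is essentially automatic for honest subgroups because cosets are permuted by multiplication.
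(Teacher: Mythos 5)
Your proof is correct, and it takes a genuinely different route from the paper. The paper argues on the Schreier graph side: given $\varepsilon>0$ it picks a finitely supported test function $f$ on $\Gamma/\Lambda_\infty$ whose Rayleigh quotient for the Markov operator is within $\varepsilon$ of $\rho(\Gamma/\Lambda_\infty,S)$, notes that convergence in $\Sub(\Gamma)$ gives label-preserving isomorphisms of large balls $B_{\Lambda_n}(R+1)\cong B_{\Lambda_\infty}(R+1)$, and transplants $f$ to $\ell^2(\Gamma/\Lambda_n)$ without changing its Rayleigh quotient. You instead work entirely with return probabilities: $p_{2n}(\Lambda)$ is determined by $\Lambda\cap B_S(2n)$, hence is eventually constant along the sequence for each fixed $n$, and supermultiplicativity of $p_{2n}(\Lambda)$ for an honest subgroup (via Fekete) upgrades the limit to a supremum, from which lower semicontinuity is immediate. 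Both proofs are instances of the same principle --- the quantity is a supremum of locally determined nonnegative expressions --- and both are complete; yours has the merit of matching the paper's actual definition of $\rho$ as $\lim p_{2n}^{1/2n}$ directly and avoiding spectral theory altogether (one could alternatively get $\lim=\sup$ from the monotonicity of $\ip{P^{2n}\delta_H,\delta_H}^{1/2n}$, as the paper does elsewhere), while the paper's variational argument is the one that generalizes to settings (e.g.\ Benjamini--Schramm limits of graphs, or the subrelation setting) where supermultiplicativity is unavailable. Your closing remark correctly identifies that the coset-permutation argument is exactly what breaks for subrelations and necessitates the $2$--$3$ method.
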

\begin{proof}
For any subgroup $\Lambda\subset \Gamma$ let $B_\Lambda(R)$ denote the $R$-ball around the trivial coset in the Schreier graph ${\rm Sch}(\Gamma/\Lambda, S).$ We will write $P$ for the Markov transition operator $P=\frac{1}{|S|}\sum_{s\in S} s.$
Let $\varepsilon>0$ and let $f\in \ell^2(\Gamma/\Lambda_\infty)$ be a non-zero finitely supported function such that \[\frac{\langle Pf, f\rangle}{\langle f, f\rangle}\geq \rho(\Gamma/\Lambda_\infty,S)-\varepsilon.\]
Choose $R>0$ such that the support of $f$ is contained in $B_{\Lambda_\infty}(R)$. For big enough $n$ we will have an isomorphism between labeled graphs
$\iota_n\colon B_{\Lambda_n}(R+1)\simeq B_{\Lambda_\infty}(R+1).$ Let $f_n=f\circ \iota_n$ be the pullback of $f$ to $\ell^2(\Gamma/\Lambda_n)$. Then,
\[\frac{\langle Pf_n, f_n\rangle}{\langle f_n, f_n\rangle}=\frac{\langle Pf, f\rangle}{\langle f, f\rangle}\geq \rho(\Gamma/\Lambda_\infty,S)-\varepsilon.\] We deduce that $\liminf_{n\to\infty}\rho(\Gamma/\Lambda_n,S)\geq \rho(\Gamma/\Lambda_\infty,S)-\varepsilon$. We finish the proof by taking $\varepsilon\to 0$.
\end{proof}

\section{Co-spectral radius and percolation}\label{sec:percolation}
Let $(G,o)$ be a transitive graph with the root at $o$ (resp. unimodular random graph). A invariant percolation $P$ is a random subset of edges of $G$, such that the distribution of $P$ is invariant under graph automorphisms (resp. invariant under the re-rooting equivalence relation).
\begin{thm}\label{thm:PercolationExp}
Let $P$ be an invariant bond percolation on a unimodular random graph $(\mathcal G,o)$ of degree at most $d$. Let $C$ be the connected component of $o$ in the percolation $P$. Let $X_n$ be the standard random walk on $\mathcal G$ starting at $o$. The limit
\[\rho_P:=\lim_{n\to\infty}\mathbb P(X_n\in C)^{1/{2n}} \]
exists almost surely.
\end{thm}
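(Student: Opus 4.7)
\emph{Proof proposal.} The plan is to reduce the problem to the measured equivalence relation framework via Proposition \ref{prop:PercGraph}, and then apply the $2$-$3$ method (Theorem \ref{thm:generalized 23}) as set up in Example \ref{example: finite cost}.

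First, I invoke Proposition \ref{prop:PercGraph} on the pair $((\cG,o),P)$ to obtain a probability measure preserving equivalence relation $\cR$ over a standard probability space $(\Omega_\#,\mu_\#)$, a generating graphing $\Phi=(\varphi_i)_{i\in I}$, and a subrelation $\cS\leq\cR$ such that the rooted graph $(\cG_\omega,\omega)$ determined by $\Phi$ on $[\omega]_\cR$ has the same law as $(\cG,o)$, and the $\cS$-class of $\omega$ inside $\cG_\omega$ is distributed as the percolation cluster $C$ of $o$. Under this identification, the standard random walk $X_n$ on $\cG$ starting at $o$ is transported to the Markov chain on $[\omega]_\cR$ with transition kernel
\[\nu(\omega,\omega')=\frac{1_{\omega\sim\omega'}}{\deg_{\cG_\omega}(\omega)},\]
and the event $\{X_n\in C\}$ becomes the event that this walk lies in $[\omega]_\cS$ after $n$ steps; I denote its quenched probability by $p_{n,\omega,\cS}$, so that $\mathbb P(X_n\in C)=p_{n,\omega,\cS}$ $\mu_\#$-almost surely.

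Next, set $\pi(\omega)=\deg_{\cG_\omega}(\omega)\leq d$, so $\pi\in L^\infty(\Omega_\#,\mu_\#)\subseteq L^1(\Omega_\#,\mu_\#)$. Reversibility of the simple random walk on each graph $\cG_\omega$, together with the mass transport principle for the p.m.p. relation $\cR$, shows that $\nu$ is $\pi$-symmetric in the sense of Example \ref{example: finite cost}. The functions $f_k(\omega,\omega')=p_{2k,\omega,\omega'}1_\cS(\omega,\omega')$ then satisfy the four hypotheses of Theorem \ref{thm:generalized 23}: $\pi$-symmetry is preserved by convolution and by multiplication by $1_\cS$; hypothesis (b) holds as equality on $\cR$ before one restricts to $\cS$; the bound
\[0<p_{2,\omega,\omega}^{k}\leq p_{2k,\omega,\omega}\leq \sum_{\omega'\in[\omega]_\cR}f_k(\omega,\omega')\leq 1\]
gives (c); and (d) holds with $D(\omega)=p_{2,\omega,\omega}>0$.

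Applying Theorem \ref{thm:generalized 23} (via Example \ref{example: finite cost}) yields that for $\mu_\#$-almost every $\omega$ the limit $\lim_{k\to\infty}p_{2k,\omega,\cS}^{1/(2k)}$ exists and is positive. Combined with the identification $\mathbb P(X_n\in C)=p_{n,\omega,\cS}$ from the first paragraph, this provides the desired almost-sure existence of $\rho_P$. The only nontrivial piece of bookkeeping---and what I would regard as the main (very mild) obstacle---is checking that the correspondence of Proposition \ref{prop:PercGraph} really identifies the quenched $n$-step return probability of the graph random walk with $p_{n,\omega,\cS}$; this is built into the construction of the graphing $\Phi$ in the proof of that proposition and requires only unpacking of definitions.
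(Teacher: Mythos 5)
Your proposal is correct and follows essentially the same route as the paper: reduce to an inclusion $\cS\leq\cR$ of p.m.p.\ relations via Proposition \ref{prop:PercGraph}, identify $\mathbb P(X_n\in C)$ with $p_{n,\omega,\cS}$, and invoke the pointwise $2$-$3$ method. The only (minor) difference is that the paper cites Theorem \ref{thm:SubExponent} at the final step, whereas you go directly through Theorem \ref{thm:generalized 23} with the degree-weighted $\pi$-symmetric kernel of Example \ref{example: finite cost}, which is arguably the more precise reference for the \emph{standard} (degree-reversible) random walk on a bounded-degree unimodular random graph.
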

\begin{proof}
The fist step is to use Proposition \ref{prop:PercGraph} to construct an associated p.m.p equivalence relation with a graphing and a sub-relation that will allow us to apply Theorem \ref{thm:SubExponent}.
We now borrow the notation from the section \ref{sec:co spectral radius setup} and Proposition \ref{prop:PercGraph}. Let $(G,o),P$ be an instance of the percolation and let $x=((G,o),P,\lambda)\in \Omega_\#$ with an i.i.d. random coloring $\lambda$. The graph $(\mathcal G_x,x)$ is isomorphic to $(G,o)$ almost surely. Since $C=[x]_\cS$ almost surely, the probability of returning to the connected component of $P$ containing the root $o$ at time $n$ is the same as $p_{n,x,\cS}^{\nu}$. We have
\[\lim_{n\to\infty}\mathbb P(X_n\in C)^{1/{2n}} =\lim_{n\to\infty} \left(p_{2n,x,\cS}^\nu\right)^{1/2n}.\]
The limit on the right hand side exists for $\mu_\#$ almost all $x$, by virtue of Theorem \ref{thm:SubExponent}.
\end{proof}
\subsection{Critical values from spectral radius}

Using Theorem \ref{thm:PercolationExp}, we can define two new critical values of the Bernoulli bond percolation that fit nicely with the existing classical exponents like $p_c,p_u,p_{[2\to 2]}$ and $p_{\rm exp}$. We recall their definitions below.
The $\cB_p$ denotes the Bernoulli bond percolation with parameter $p\in[0,1]$. For simplicity we restrict to transitive rooted  graph $(G,o)$, but the definitions below can be easily adapted to all ergodic unimodular random graphs.
\begin{itemize}
    \item $p_u$ is defined as the infimum of $p\in[0,1]$ such that $\cB_p$ has a unique infinite connected component a.s.
    \item $p_c$ is defined as the infimum of $p\in[0,1]$ such that $\cB_p$ has an infinite connected component a.s.
    \item For $x,y\in V(G)$ let $\tau_p(x,y)$ denote the probability that $x$ and $y$ are connected in $\cB_p$. The exponent $p_{\rm exp}$ is the supremum of all $p$ such that $\tau_p(x,y)$ decays exponentially in $\dist(x,y)$.
    \item Consider the operator $T_p$ acting on compactly functions on $V(G)$ defined by \[T_p(\phi)(x)=\sum_{y\in V(G)}\tau_p(x,y)\phi(y).\] The exponent $p_{[2\to 2]}$ is the supremum of $p$ such that $T_p$ defines a bounded operator $\ell^2(V(G))\to \ell^2(V(G))$ \cite[Section 2]{Hutchcroft}.
\end{itemize}
For quasi transitive graphs, these numbers satisfy the inequalities $p_c\leq p_{[2\to 2]}\leq p_{\rm exp}\leq p_u$,  ( \cite{Sharpness_2}, \cite[Theorem 1.1.2]{Sharpness}, \cite[Theorem 2.2]{Hutchcroft2_2})
We add two more specimen to this zoo of critical values. They implicitly depend on the random walk, we always choose the standard one.
\begin{defn}
\begin{enumerate}
    \item Let $p_{\rm Ram}$ (for Ramanujan) be the supremum of $p$ such that $\rho_{\cB_p}=\rho_{G}.$
    \item Let $p_{\rm cK}$ (for co-Kaimanovich) be the infimum of $p$ such that $\rho_{\cB_p}=1.$
\end{enumerate}
\end{defn}

The reason for the term ``co-Kaimanovich" is as follows. For spectral radius (as opposed to co-spectral radius) of relations $\cR$, having $\rho(\cR,\nu)=1$ for some countably supported $\nu\in \Prob([\cR])$ whose support generates $\cR$ is \emph{not} equivalent to hyperfiniteness of $\cR$ as discussed in detail in work of Kaimanovich \cite{VKLeafcounterexample}. This work of Kaimanovich greatly clarified a common misconception in the literature, and gave precise and tractable criteria to verify when a relation is hyperfinite in terms of the data of several spectral radii. E.g. this work  shows that spectral radius $1$ is equivalent to having $\rho(\cR,\nu)=1$ for \emph{every} countably supported $\nu\in \Prob([\cR])$ whose support generates $\cR$. It is thus reasonable to call a relation $\cR$ \emph{Kaimanovich} if there is a $\nu\in \Prob([\cR])$ whose support generates $\cR$ and has $\rho(\cR,\nu)=1$. Since co-spectral radius heuristically plays the role of spectral radius in a quotient, this motivates the term co-Kaimanovich.



We now compare how these critical values are related.

\begin{prop}
\begin{enumerate}
    \item  $p_{\rm Ram}\leq p_{\rm cK}$,
    \item $p_{\rm cK}\leq p_u$
    \item $p_{\rm exp}\leq p_{\rm cK}$,
    \item $p_{[2\to 2]}\leq p_{\rm Ram}$,
\end{enumerate}
\end{prop}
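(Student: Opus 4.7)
The plan is to leverage the identification $\rho_{\cB_p}=\|\lambda_{\cS_p}(\nu)\|$ from Theorem~\ref{T:existence of the relative spectral radius}, where $\cS_p\leq\cR$ is the sub-equivalence relation encoding Bernoulli$(p)$ clusters via Proposition~\ref{prop:PercGraph} and $\nu$ is the symmetric probability measure on $[\cR]$ corresponding to the standard random walk on $G$. Under the standard monotone coupling of Bernoulli percolations, $p\mapsto\cS_p$ is non-decreasing, and hence so is $p\mapsto\rho_{\cB_p}$ by Lemma~\ref{L:continuity under increases}. Statement~(1) is then immediate: for non-amenable $G$ we have $\rho_G<1$, so for every $p<p_{\rm Ram}$ one has $\rho_{\cB_p}=\rho_G<1$, whence $p\leq p_{\rm ca}$.

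For~(2), fix $p>p_u$. Almost surely there is a unique infinite cluster $C_\infty$, and the property of being that cluster is $\cR$-invariant since re-rooting does not change the underlying percolation. I would define $\xi\in L^2(\cR/\cS_p)$ by $\xi_x(c)=1_{c=C_\infty}$ for $c\in[x]_\cR/\cS_p$; then $\|\xi_x\|^2=1$ almost surely, so $\|\xi\|=1$, and $\xi_{\phi^{-1}(x)}=\xi_x$ for every $\phi\in[\cR]$. Hence $\lambda_{\cS_p}(\phi)\xi=\xi$ for each such $\phi$, so $\lambda_{\cS_p}(\nu)\xi=\xi$, forcing $\|\lambda_{\cS_p}(\nu)\|=1$ and $\rho_{\cB_p}=1$. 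This gives $p_{\rm ca}\leq p_u$.

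For~(3), fix $p<p_{\rm exp}$, so $\tau_p(x,y)\leq C\alpha^{d(x,y)}$ for some $\alpha\in(0,1)$. Using $\int p_{2n,x,\cS_p}\,d\mu(x)=\E_o[\tau_p(o,X_{2n})]$ and splitting the expectation according to whether $d(o,X_{2n})\leq sn$, one finds
\[\E_o[\alpha^{d(o,X_{2n})}]\leq |B_G(o,sn)|\max_y p_{2n}(o,y)+\alpha^{sn}\leq d^{sn}\rho_G^{2n}+\alpha^{sn}.\]
For non-amenable $G$ we have $\rho_G<1$; choosing $s>0$ small enough so that $d^s\rho_G^2<1$ makes both terms exponentially small, whence $\rho_{\cB_p}<1$ and $p\leq p_{\rm ca}$.

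For~(4), assume $p<p_{[2\to2]}$, so that $T_p$ extends to a bounded operator on $\ell^2(V(G))$. Writing $P$ for the self-adjoint transition operator of the walk and using symmetry of $\tau_p$ (so that $(T_p\delta_o)(x)=\tau_p(o,x)$), I obtain
\[\int p_{2n,x,\cS_p}\,d\mu=\E_o[\tau_p(o,X_{2n})]=\langle P^{2n}T_p\delta_o,\delta_o\rangle\leq\|P^{2n}\|\,\|T_p\|=\rho_G^{2n}\|T_p\|.\]
Taking $(2n)$-th roots and sending $n\to\infty$ yields $\rho_{\cB_p}\leq\rho_G$, and the reverse inequality $\rho_{\cB_p}\geq\rho_G$ is immediate from $\cT\leq\cS_p$ and monotonicity; hence $\rho_{\cB_p}=\rho_G$ and $p\leq p_{\rm Ram}$. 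The main obstacle is~(3): non-amenability of $G$ is needed both to have $\rho_G<1$ and to exponentially damp $\E_o[\alpha^{d(o,X_{2n})}]$, since for amenable $G$ (e.g.\ $\Z^d$, where $\rho_{\cB_p}\equiv 1$) the inequality fails, so the statement should be understood with $G$ implicitly non-amenable.
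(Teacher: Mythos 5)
Your proposal is correct, and for parts (2) and (3) it takes a genuinely different route from the paper. Parts (1) and (4) essentially coincide with the paper's argument (the paper bounds $\E(p_{2n}(o,[o]_{\cB_p}))=T_pf_{2n}(o)\leq \|T_p\|\,\|f_{2n}\|_2=\|T_p\|\,p_{4n}(o,o)^{1/2}$, which is the same computation as your pairing $\ip{P^{2n}T_p\delta_o,\delta_o}\leq \rho_G^{2n}\|T_p\|$). For (2), the paper does not build an invariant vector: it sets $E=\{\omega:[\omega]_{\cS}\mbox{ is infinite}\}$, notes that uniqueness forces $\cR|_E=\cS|_E$, and uses the mass-transport principle to get $\int p_{2n,x,\cS}\,d\nu_{\#}\geq \nu_{\#}(E)>0$, hence the annealed radius is $1$. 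Your construction of the unit vector field $\xi_x=\delta_{C_\infty}$ fixed by every $\lambda_{\cS_p}(\phi)$ is a clean, more conceptual alternative (co-amenability as existence of invariant vectors); the only point to make explicit is measurability of the field, which holds because ``$[\phi(x)]_{\cS}$ is infinite'' is a measurable condition. For (3), the paper chooses $q>1$ with $\sum_v\tau_p(o,v)^q<\infty$ and applies H\"older together with Riesz--Thorin interpolation to get $p_n(o,[o]_{\cB_p})\leq A\,\rho_G^{2n/q}$, whereas you split the expectation at distance $sn$ and use the volume bound $|B_G(o,sn)|\leq d^{sn+O(1)}$ together with $p_{2n}(o,y)\leq \rho_G^{2n}$; both arguments trade the exponential decay of $\tau_p$ against the at-most-exponential spread of the walk, and both degenerate exactly when $\rho_G=1$. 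Your closing remark that (3) must be read with $G$ non-amenable is accurate and applies equally to the paper's own proof; likewise, both your argument and the paper's really control the annealed quantity $\lim(\int p_{2n,x,\cS}\,d\mu)^{1/2n}=\|\lambda_{\cS_p}(\nu)\|$ and pass to $\rho_{\cB_p}$ through Theorem \ref{T:existence of the relative spectral radius}, so no additional gap is introduced on your side.
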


We remark that (4) is equivalent to a theorem of Hutchcroft \cite[Proposition 6.4]{Hutchcroft}, but as our proof is short we include the proof for completeness.

\begin{proof}
(1) If $G$ is amenable, then $1\geq \rho_{\cB_{p}}\geq \rho_{G}\geq 1$. 
If $G$ is non amenable, then $\rho_{G}<1$. The inequality follows now from the obvious monotonicity of $\rho_{\cB_p}$ in $p$.

(2) Let $p>p_{u}$. Let $\cS\leq \cR$ be the inclusion of equivalence relations constructed in Proposition \ref{prop:PercGraph}.  The infinite connected component of $\cB_{p}$ in the percolated graph corresponds to an $\cS$-invariant positive measure subset $E\subseteq \Omega_\#$ (namely $E=\{\omega:[\omega]_{\cS} \textnormal{ is infinite}\}$). Uniqueness of infinite connected component tells us that $\cR\big|_{E}=\cS\big|_{E}$. For $l\in \N$, we let $p_{l,x,E}$ be the probability that the random walk starting at $x$ is in $E$ after $l$ steps.
Since $\cR|_{E}=\cS|_{E}$, we have $p_{2n,x,\cS}\geq p_{2n,x,E}$. 
Thus, for almost every $x\in X$ we have 
\[\rho^{\cS}(x)\geq \rho^{\cR}_{E}(x)=\rho^{\cR}(x)1_{\widetilde{E}}(x)=1_{\widetilde{E}}(x),\]
where in the first equality we use Theorem \ref{thm: satuartion invariance}. Since $\mu(E)>0$ we obtain that 
\[\rho_{\cB_{p}}=\|\rho^{\cS}\|_{\infty}=1.\]


(3) Let \[\xi_p:=-\limsup_{n\to\infty} \frac{1}{n}\log \sup_{u,v}\{\tau_p(u,v)|\dist(u,v)\geq n\}.\] Then
$p_{\rm exp}=\sup \{p\in [0,1] | \xi_p>0\}.$ Let $p<p_{\rm exp}$. The spheres in $G$ grow at most exponentially fast so there exists $q>1$ such that
\[A^q:=\sum_{v\in G} |\tau_p(o,v)|^q< +\infty.\]
We have
\[ p_n(o,[o]_{\cB_p})=\sum_{v\in G} p_n(o,v)\tau_p(o,v).\]
By H\"older inequality
\[p_n(o,[o]_{\cB_p})\leq \|p_n(o,-)\|_{\frac{q}{q-1}}\|\tau_p(o,-)\|_q=A \|p_n(o,-)\|_{\frac{q}{q-1}}.\]
By the Riesz-Thorin theorem
\[\|p_n(o,-)\|_{\frac{q}{q-1}}\leq \|p_n(o,-)\|_1^{1-\frac{2}{q}}\|p_n(o,-)\|_2^{\frac{2}{q}}\leq \rho_G^{\frac{2n}{q}}.\]
Hence, $p_n(o,[o]_{\cB_p})\leq A \rho_G^{\frac{2n}{q}}$. The upper bound decays exponentially in $n$, so $\rho_{\cB_p}<1.$ This proves that $p_{\rm exp}\leq p_{\rm cK}.$

(4) Let $p<p_{[2\to 2]}$, so that $\|T_p\|_{L^2\to L^2}<\infty$. 
Fix the root $o\in V(G)$ and let $v\in V(G)$ be a vertex. Let $f_n(v):=p_n(o,v)$, i.e. the probability of going from $o$ to $v$ in time $n$. Note that $\|f_n\|_2^2=p_{2n}(o,o).$
We have
\[ T_p f_n(o)=\sum_{v\in V(G)} \tau_p(o,v) p_n(o,v)=\mathbb E(p_n(o,[o]_{\cB_p})).\]
The operator $T_p$ is bounded, so $\|T_p f_n\|_2\ll \|f_n\|_2$. We deduce that
\[ \rho_{\cB_p}=\lim_{n\to\infty} \mathbb E(p_{2n}(o,[o]_{\cB_p}))^{\frac{1}{2n}}\leq \lim_{n\to n} p_{4n}(o,o)^{\frac{1}{4n}}=\rho_G.\]
The reverse inequality $\rho_G\leq \rho_{\cB_p}$ is always trues so $\rho_{\cB_p}=\rho_G$. This demonstrates that $p\leq p_{\rm Ram}$. It follows that $p_{[2\to 2]}\leq p_{\rm Ram}$.
\end{proof}
\begin{example}
\begin{enumerate}
    \item Let $\mathcal T_d$ be the $d$-regular tree. We have $p_{\rm cK}=1$ and $p_{\rm Ram}=\frac{1}{2}.$
\end{enumerate}
\end{example}

\subsection{Walk growth}\label{sec: walk growth}

Let $(G,o)$ be a unimodular random graph with degree at most $d\in \N$. Let $\cM_{d}$ be the space of rooted graphs where each vertex has degree at most $d$, modulo isomorphism. This can be turned into a compact metric space with the distance
\[\rho([(G,o)],[(G',o')])=\inf\{2^{-l}:(B_{G}(o,l),o)\cong (B_{G'}(o',l),o')\}.\]
Recall that the distribution $\eta$ of $[(G,o)]$ is an probability measure on $\cM_{d}$ which is invariant under the rerooting equivalence relation
\[\{([(G,o)],[(G',o')]):G\cong G \textnormal{ as unrooted graphs}\}\]
(we remark that $\eta$ being rerooting invariant does not characterize unimodularity, but that will not cause an issue for us here).
Let $w_{n}(o)$ be the number of walks of length $n$ starting at $o$. 

Note that, by Proposition \ref{prop:PercGraph}, we can find a p.m.p. countable equivalence relation $(\Omega_{\#},\nu_{\#},\cR)$ with a generating graph $\Phi=(\phi_{i})_{i\in I}$ so that the distribution of $[(\cG,o)]$ is the law of the rooted graph $(\cG_{\omega},\omega)$ defined for $\omega\in \Omega_{\#}$ where the vertex set of $\cG_{\omega}$ is $[\omega]_{\cR}$ and the edge set $\{(\omega',\phi_{i}^{\pm 1}(\omega'):w'\in [\omega]_{\cR},i\in I\}$.
Define $f_{k}\colon \cR\to [0,+\infty)$ by declaring $f_{k}(x,y)$ to be the number of paths from $x$ to $y$ in $G_{x}$. It is direct to verify the hypotheses of Theorem \ref{thm:generalized 23} and thus 
\[\lim_{k\to\infty}\left(\sum_{y\in [x]_{\cR}}f_{k}(x,y)\right)^{1/k}\]
exists almost everywhere. The above sum has the same law as $w_{k}(o)$, and this proves that $\lim_{n\to\infty}\frac{1}{n}\log w_{n}(o)$ exists.

\begin{thm}
Fix notation as above, and suppose $\eta$ is ergodic under the rerooting equivalence relation. Define $A\in B(L^{2}(\cM_{d},\eta))$ by
\[(Af)([(G,o)])=\sum_{v\thicksim o}f([(G,v)]).\]
Then
\[\|A\|=\lim_{n\to\infty}w_{n}(o)^{1/n}\]
\end{thm}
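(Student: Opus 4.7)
The plan is to identify $\lim_n w_n(o)^{1/n}$ with $\|A\|$ via the matrix coefficient of $A$ at the constant vector $\mathbf{1} \in L^2(\cM_d,\eta)$. First, a direct count gives $(A^n \mathbf{1})([(G,o)])=w_n(o)$, hence $\ip{A^n\mathbf{1},\mathbf{1}}=\int w_n(o)\,d\eta=\E[w_n(o)]$. Moreover $A$ is self-adjoint: for $f,g\in L^2(\cM_d,\eta)$, the mass-transport principle (i.e.\ unimodularity of $\eta$) applied to the transport $F([(G,o,v)])=f([(G,v)])\overline{g([(G,o)])}1_{v\sim o}$ yields $\ip{Af,g}=\ip{f,Ag}$. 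In particular, $\ip{A^{2n}\mathbf{1},\mathbf{1}}=\|A^n\mathbf{1}\|_2^2\leq\|A\|^{2n}$.

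For the lower bound $\rho\leq\|A\|$, I would apply Theorem \ref{thm:generalized 23}(\ref{item:integral formula for sup intro}) to the walk-counting sequence $f_k$ of Example \ref{ex-23Walks} with $\eta$ the indicator of a symmetric generating graphing of $\cR$ from Proposition \ref{prop:PercGraph}, so that $\sum_y f_k(x,y)=w_{2k}(x)$. This gives $\lim_k \bigl(\int w_{2k}(o)\,d\eta\bigr)^{1/k}=\|\widetilde{f}\|_\infty$, where $\widetilde{f}([(G,o)])=\lim_k w_{2k}(o)^{1/k}$. The function $\widetilde{f}$ is invariant under rerooting, since for any two vertices $o,v$ at distance $\leq r$ in $G$ one has $w_{n+r}(o)\geq w_n(v)$, giving $\widetilde{f}([(G,o)])=\widetilde{f}([(G,v)])$. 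By ergodicity of $\eta$, $\widetilde{f}=\rho^2$ almost surely, where $\rho=\lim_n w_n(o)^{1/n}$. Hence $\lim_n \E[w_{2n}(o)]^{1/2n}=\rho$, and combined with $\ip{A^{2n}\mathbf{1},\mathbf{1}}^{1/2n}\leq\|A\|$ we get $\rho\leq\|A\|$.

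For the upper bound $\|A\|\leq\rho$, I would apply Lemma \ref{L:norm on dense set of vectors} to the self-adjoint operator $A$ with the family $K=C(\cM_d)$, whose linear span is dense in $L^2(\cM_d,\eta)$ because $\cM_d$ is compact metric. For any $f\in C(\cM_d)$,
\[|\ip{A^{2n}f,f}|=\Bigl|\int\sum_{v_0\sim v_1\sim\cdots\sim v_{2n}\,,\,v_0=o}f([(G,v_{2n})])\overline{f([(G,o)])}\,d\eta\Bigr|\leq\|f\|_\infty^2\,\E[w_{2n}(o)],\]
so $\lim_n|\ip{A^{2n}f,f}|^{1/2n}\leq\rho$ by the previous paragraph. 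Lemma \ref{L:norm on dense set of vectors} then yields $\|A\|\leq\rho$.

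The only genuinely non-routine step is the existence and essential-sup identification of $\widetilde{f}$, which is exactly what Theorem \ref{thm:generalized 23} and the ergodic rerooting argument provide; everything else reduces to mass transport and the spectral theorem.
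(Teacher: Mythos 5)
Your proof is correct and follows essentially the same route as the paper's: both identify $\E[w_n(o)]$ with $\ip{A^n\mathbf{1},\mathbf{1}}$, use Theorem \ref{thm:generalized 23} (\ref{item:integral formula for sup}) together with rerooting-invariance and ergodicity to show that $\lim_n\E[w_n(o)]^{1/n}$ equals the almost surely constant walk growth, and then apply Lemma \ref{L:norm on dense set of vectors} to a dense family of vectors whose matrix coefficients are dominated (via positivity of the kernel of $A$) by $\ip{A^{2n}\mathbf{1},\mathbf{1}}$. The only cosmetic differences are your choice of $C(\cM_d)$ as the dense family where the paper uses the indicator functions $1_E$, and your explicit mass-transport verification of self-adjointness of $A$, which the paper asserts without proof.
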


\begin{proof}
Observe that if $o'$ is a vertex in the rooted graph $(G,o)$ and if $l=d_{G}(o,o')$, then
\[w_{n-l}(o')\leq w_{n}(o)\leq w_{n+l}(o') \textnormal{for all $n\in \N$}\]
and so
\[\lim_{n\to\infty}w_{n}(o')^{1/n}=\lim_{n\to\infty}w_{n}(o)^{1/n}.\]
Thus, by ergodicity, $\lim_{n\to\infty}w_{n}(o)^{1/n}$ is almost surely constant.
By Theorem \ref{thm:generalized 23} (\ref{item:integral formula for sup}) this constant equals
\begin{equation}\label{eqn: walk growth constant}
\lim_{n\to\infty}\E(w_{n}(o))^{1/n}.
\end{equation}
A direct calculation shows that 
$\ip{A^{k}1,1}=\E(w_{k}(o)).$
Note that 
\[\overline{\Span\{1_{E}:E\subseteq \cM_{d} \textnormal{ is measurable}\}}=L^{2}(\cM_{d},\eta),\]
and that 
$\ip{A^{k}1_{E},1_{E}}\leq \ip{A^{k}1,1}$
for all measurable sets $E$. Thus, the same argument as in  Theorem \ref{T:existence of the relative spectral radius} (\ref{I:averageversion subrelation}) using Lemma \ref{L:norm on dense set of vectors} shows that (\ref{eqn: walk growth constant}) is equal to $\|A\|$. 

\end{proof}





%

\end{document}